\documentclass[reqno]{amsart}
\UseRawInputEncoding
\usepackage{tikz}
\usepackage{amsmath, amsfonts,amssymb,amsthm,amscd,latexsym,cite}
\usepackage{mathrsfs}
\usepackage{enumitem}
\usepackage{color}
\usepackage{verbatim}
\newtheorem{thm}{Theorem}[section]
\newtheorem{theorem}[thm]{Theorem}

\newtheorem{lemma}[thm]{Lemma}

\newtheorem{corollary}[thm]{Corollary}
\newtheorem{proposition}[thm]{Proposition}
\newtheorem{definition}[thm]{Definition}
\newtheorem{remark}[thm]{Remark}

\newtheorem{question}[thm]{Question}

\newcommand{\cA}{{\mathcal A}}
\newcommand{\cB}{{\mathcal B}}
\newcommand{\cC}{{\mathcal C}}
\newcommand{\cD}{{\mathcal D}}
\newcommand{\cE}{{\mathcal E}}
\newcommand{\cF}{{\mathcal F}}

\newcommand{\cH}{{\mathcal H}}

\newcommand{\cM}{{\mathcal M}}
\newcommand{\cN}{{\mathcal N}}

\newcommand{\cS}{{\mathcal S}}

\newcommand{\bC}{{\mathbb{C}}}

\setlist[enumerate,1]{label=\textup{(\roman*)}}

 \usepackage{color}
 \newcommand{\norm}[1]{\left\lVert#1\right\rVert}
\usepackage{hyperref}					
\hypersetup{colorlinks,
	linkcolor=blue,%
	citecolor=blue}

 \usepackage{float}
\begin{document}
\title[Disjointness-preserving mappings  and positive isometries]{Disjointness-preserving mappings   on 
Calkin operator spaces and positive isometries}

\author[K. Fang]{Kai Fang}\address[Kai Fang]{Institute for Advanced Study in Mathematics of HIT, Harbin 150001, China} \email{\color{blue}kaifang.8.25@gmail.com}

\author[J. Huang]{Jinghao Huang}\address[Jinghao Huang]{Institute for Advanced Study in Mathematics of HIT, Harbin 150001, China} \email{\color{blue}jinghao.huang@hit.edu.cn}

\author[K. Kudaybergenov]{Karimbergen Kudaybergenov}\address[K. Kudaybergenov]{Institute for Advanced Study in Mathematics of HIT, Harbin, 150001, China and
Suzhou Research Institute of HIT, Suzhou, 215104, China}
\email{\color{blue}kudaybergenovkk@gmail.com}

\author[R. Xu]{Ran Xu}\address[Ran Xu]{Institute for Advanced Study in Mathematics of HIT, Harbin 150001, China} \email{\color{blue}xxurann@stu.hit.edu.cn}

\thanks{\footnotesize
The work was supported by the NNSF of China  (No.12031004, 12301160 and 12471134)
and the 
Basic Research Program of Jiangsu (BK20251783).
 }

\subjclass[2020]{46B04; 46L52. \hfill 
}

\keywords{Calkin space; disjointness-preserving mapping;  positive isometry; strictly monotone norm}

\begin{abstract} 
  Let   
    $E(\cM,\tau)$  and  $F(\cM,\tau)$ be  two 
  Calkin operator spaces affiliated with  a  semifinite von Neumann algebra $\cM$  equipped with a semifinite faithful normal trace $\tau $. 
  We show that if $\cM$ is atomless, $\tau$ is finite, and $E(\cM,\tau)\not\subseteq F(\cM,\tau)$, then 
   every order-measure continuous and disjointness-preserving  mapping  $T:E(\cM,\tau)\xrightarrow{\rm into} F(\cM,\tau)$  is identical to  the zero mapping,
which establishes a noncommutative version of Abramovich's theorem.
We also show that every positive isometry $T$ from a normed $\cM$-bimodule $E(\cM,\tau)$  of $\tau$-measurable operators 
 into another $F(\cM,\tau)$ preserves disjointness provided that the norm of  $F(\cM,\tau)$ is  strictly monotone. 
As an application, we obtain the general form of $T$, which extends and unifies  several results due to Abramovich,
de Jager,  Conradie, 
Veksler and Sukochev et al.  \cite{SV,HSZ20,Abra1991,vek,dC20}. 

\end{abstract}
\maketitle

\section{Introduction}

\subsection{Background}
Disjointness-preserving mappings have been playing  an important  role in the theory of Banach lattices and symmetric function spaces.
We shall frequently omit below the adjective `linear' as we do not consider non-linear mappings in this paper.
In the classical  setting of lattices, any  regular disjointness-preserving operator
allows a multiplicative representation as a weighted composition operator, which
provides an abstract framework for a very important 
class of operators in analysis \cite{AK}. 
Abramovich\cite{Abra} obtained the multiplicative
representation of disjointness-preserving mappings on vector lattices.
Building on this,   Abramovich\cite{Abra1991} 
established a connection between the existence of identity embedding and the existence of an order continuous disjointness-preserving mapping
from a Calkin function space  (i.e., rearrangement invariant  ideal)  into another.
Precisely,   for (not necessarily normed) Calkin function spaces  $E(\Omega,\Sigma,\mu)$ and $F(\Omega,\Sigma,\mu)$ over an atomless finite measure space $(\Omega,\Sigma,\mu)$,  
if $$E(\Omega,\Sigma,\mu)\not\subseteq F(\Omega,\Sigma,\mu) ,$$ then every order continuous disjointness-preserving mapping
    $T:E(\Omega,\Sigma,\mu)\rightarrow F(\Omega,\Sigma,\mu)$ is identically equal to zero, i.e., $T\equiv0$\cite[Theorem 1]{Abra1991}.
An immediate  consequence of this result is the non-existence of a nontrivial disjointness-preserving operator from $L_p(\Omega,\Sigma,\mu)$ into $L_q(\Omega,\Sigma,\mu)$ for $0<p<q\leq \infty$ (see also \cite{Drahklin}).

\subsection{A noncommutative version of Abramovich's theorem concerning disjointness-preserving mappings}
Recall that  a  Calkin  space $E$ is a subspace of $c_0$ such that $a\in E$ and $\mu(b)\le \mu(a) $ implies $b\in E$, where $\mu(a)$ stands for the decreasing rearrangement. 
The classical Calkin correspondence states that the correspondence 
$E\leftrightarrow \cE$ is a bijection between Calkin sequence spaces and two-sided ideals of compact operators. 
The Calkin correspondence can be extended into the semifinite setting \cite[Section 2.4]{LSZ}.

Let $\cM$ be a (semi-)finite atomless (i.e., diffuse) von Neumann algebra equipped with a (semi-)finite faithful normal trace $\tau$. 
Let $S(\cM,\tau)$ be the $^*$-algebra of all $\tau$-measurable operators affiliated with $\cM$. 
A linear subspace $\cE(\cM,\tau)$  of  $S(\cM,\tau)$ is called a Calkin operator space if $a\in\cE(\cM,\tau)$ whenever $\mu(a)\le \mu(b)$ for some $b\in \cE(\cM,\tau)$\cite[Definition  2.4.1]{LSZ}. 
A Calkin function (respectively, sequence) space is the term reserved for a Calkin
operator space when $\cM=L_\infty (0, 1)$   or $\cM =L_\infty (0,\infty )$ (respectively, $\cM = \ell_\infty $).
The notation $\cS$ may refer to the space $S(0,\infty ),$ $S(0,1)$ or $\ell_\infty $, if the context is clear.

Let $\cM$ be an atomless (or atomic with all atoms having the same trace) von Neumann algebra equipped
with a faithful normal semifinite trace $\tau$. If $\cE (\cM, \tau)$ is a Calkin operator space, then
$$E :=\left\{ f\in  \cS  : \mu(f)=\mu(a) , ~a\in \cE(\cM,\tau)\right\}$$
is a Calkin function (or sequence) space. If $E$ is a Calkin function (or sequence) space,
then
$$\cE(\cM,\tau) :=\left\{ a\in S(\cM,\tau):  ~\mu(a)\in  E\right\}$$
is a Calkin operator space. This provides a canonical bijection between Calkin operator spaces and Calkin function (or sequence) spaces\cite[Theorem 2.4.4]{LSZ}. 
In the special case where $E$ is a  (Banach) symmetrically normed function space, the corresponding Calkin operator space is also 
 a (Banach) symmetrically normed operator space~\cite{LSZ,Kalton_S}.

It is natural to 
ask whether Abramovich's theorem \cite[Theorem 1]{Abra1991} holds in the noncommutative setting.  
For this purpose,    we make essential use of the multiplicative representation of order-local measure continuous disjointness-preserving mappings\footnote{A disjointness-preserving mapping on a noncommutative $L
_p$-space is  called a  separating mapping or a noncommutative Lamperti operator, see, e.g., \cite{MZ,HRW}. } (see Theorem \ref{positive}).
Indeed, in 2020, Sukochev, Zanin and the second author of the present paper established a multiplicative representation for positive disjointness-preserving mappings\cite[Theorem 3.1]{HSZ20}. 
Building on the approach used in \cite{HSZ20}, we obtain
  a noncommutative version of \cite[Theorem A]{Abra}, by showing that 
  a (not necessarily positive)  disjointness-preserving mapping
also admits a similar representation, where the mapping can be expressed as the composition of a partial isometry followed  by the multiplicative representation  of a positive mapping. 
Based on this, we establish a noncommutative version of Abramovich's theorem \cite[Theorem~1]{Abra1991}. 
\begin{theorem}\label{main1}
Assume that $\cM$ and $\cN$ are  two  atomless von Neumann algebras equipped with  faithful normal finite traces $\tau$ and  $\nu$, 
respectively, such that $\tau(\mathbf{1})=\nu(\mathbf{1})$.
If $E(0,\tau(\mathbf{1}))$ and $ F(0,\nu(\mathbf{1}))$  are  two Calkin function spaces satisfying  $E(0,\tau(\mathbf{1}))\not\subseteq F(0,\nu(\mathbf{1}))$, then 
   every disjointness-preserving and order-measure continuous (or normal) mapping 
    $T: E(\mathcal{M}, \tau)\xrightarrow{\rm into} F(\mathcal{N}, \nu) $ is 
    identically equal to zero, i.e., $T \equiv 0$.
\end{theorem}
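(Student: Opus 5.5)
Suppose $T\neq 0$; we will contradict $E\not\subseteq F$. The first tool is the multiplicative representation of order-measure continuous disjointness-preserving mappings (Theorem \ref{positive}). It should give a partial isometry $V\in\cN$, a positive operator $B$ affiliated with $\cN$, and a normal Jordan $^*$-homomorphism $J:\cM\to\cN$ (extended to $S(\cM,\tau)$) such that
\[
T(x)=V B J(x)\qquad\text{for } x\in E(\cM,\tau),
\]
with $B$ commuting with $J(\cM)$. Then $|T(x)|$ and $B\,|J(x)|$ are related via $V$, so $\mu(T(x))=\mu(BJ(x))$ whenever $V^*V$ dominates the support of $BJ(x)$. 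In particular, $T\neq0$ forces the projection $q:=s(B)\,J(\mathbf 1)\neq0$. Because the theorem may be stated for order-measure continuous maps, a preliminary reduction to order-local measure continuity (or normality on projections) may be needed before Theorem \ref{positive} applies.

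The second step is to localize. Since $B$ is $\tau$-measurable and commutes with $J(\cM)$, we can choose $\varepsilon>0$ and a nonzero spectral projection $r=\chi_{[\varepsilon,\infty)}(B)q$ with $rB\ge \varepsilon r$. Set $p:=$ the support in $\cM$ of the preimage of $r$ under $J$, i.e.\ the smallest projection $p\in\cM$ with $J(p)\ge r$, or the central-type support of $J$ restricted to $r$. Then, for $x\in pE(\cM,\tau)p$, we get $\mu(T(x))\ge \varepsilon\,\mu(rJ(x))$.

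The third step, and the main obstacle, is comparing $\mu(rJ(x))$ with $\mu(x)$. A normal Jordan homomorphism composed with the faithful trace $\nu$ gives a normal trace-like functional $\nu(rJ(\cdot))$ on $p\cM p$. This functional is absolutely continuous with respect to $\tau$ but not necessarily faithful or scaled correctly. I would pass to a smaller projection $p_0\le p$ on which the Radon--Nikodym density of $\nu(rJ(\cdot))$ with respect to $\tau$ is bounded below by some $\delta>0$. This gives a bound of the form $\mu(rJ(x))(t)\ge \mu(x)(t/\delta)$-type for $x\in p_0E p_0$, using a dilation.

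Since $\cM$ is atomless and $\tau$ is finite, $p_0\cM p_0$ carries every decreasing rearrangement on $(0,\tau(p_0))$. Pick $f\in E(0,\tau(\mathbf 1))\setminus F(0,\nu(\mathbf 1))$; we may take $f=\mu(f)$. Because the spaces are over finite intervals and Calkin, $f\chi_{(0,s)}\notin F$ for every $s>0$, since bounded parts lie in both spaces. Dilation operators $\sigma_\lambda$ preserve the failure of membership in $F$ when combined with truncation, by rearrangement invariance and the finite-measure setting; for $\lambda\le1$ this holds directly via $\mu(\sigma_\lambda g)\le\mu(g)$ arguments on the truncated piece. We then realize $x\ge0$ in $p_0\cM p_0$ with $\mu(x)=\sigma_\delta(f\chi_{(0,s)})$, so $x\in E(\cM,\tau)$. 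The lower bound gives $\mu(T(x))\ge \varepsilon\,\mu(f\chi_{(0,s')})$ up to dilation, so $f\chi_{(0,s')}\in F$, a contradiction. Hence $T\equiv0$.

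Getting the exact inequality $\mu(rJ(x))\gtrsim \mu(x)$ after dilation is the delicate part. If the Radon--Nikodym approach fails, I would instead argue with spectral projections of $x$ directly, using $\nu(rJ(e))\ge\delta\tau(e)$ for projections $e\le p_0$.
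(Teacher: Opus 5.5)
Your argument is essentially the paper's second proof (Section~\ref{analoproof}). You cut by a spectral projection of $|T(\mathbf{1})|$ so that it dominates $\varepsilon$ on the relevant support. You use the Radon--Nikodym density of $\nu\circ J$ relative to $\tau$ to get $\nu(J(e))\ge\delta\,\tau(e)$ below a nonzero projection; only the lower half of Lemma~\ref{traceineq} is needed. You then place an element of $E\setminus F$ under that projection. Your distribution-function estimate $d_{rJ(x)}\ge\delta\, d_x$, i.e.\ $\mu(t;rJ(x))\ge\mu(t/\delta;x)$, cleanly replaces the paper's step functions and Lemma~\ref{deduce}. The auxiliary projection $p$ in your second step can be dropped, since $\mu(T(x))\ge\varepsilon\,\mu(rJ(x))$ holds for every $x$.

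The one step to repair is the dilation, whose justification points the wrong way. Since $\delta<1$, you must pass from $\sigma_\delta\mu(x)\in F$ to $\mu(x)\in F$. That is invariance of the Calkin space $F$ under the stretching $\sigma_{1/\delta}$, which is the paper's \eqref{Ds}. The inequality $\mu(\sigma_\lambda g)\le\mu(g)$ for $\lambda\le 1$ gives only the reverse implication. Stretching invariance does hold for Calkin spaces, but it needs its own argument. For decreasing $g$ on $(0,1)$, split $\sigma_2 g$ along the four successive quarters of each dyadic interval $[2^{-k-1},2^{-k})$. Each piece has distribution function at most $d_g$, so $\sigma_2 g\in F$ whenever $g\in F$; iterating handles any $\sigma_s$. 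With this in hand, you can take $\mu(x)=f\chi_{(0,\tau(p_0))}$ directly, and no dilation is needed when choosing $x$.
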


The o-$t_{lm}$  continuity   of $T$  in Theorems \ref{main1} cannot be dispensed with.
On the other hand, the case of atomic von Neumann algebras is of no interest.
Detailed explanations  can be found in \cite[Section 5.2]{Abra1991}. 
Note that Theorem \ref{main1} does not hold
if the traces of von Neumann algebras are infinite, see, e.g., \cite[Section 5.4]{Abra1991}. 
A weak form of   Theorem \ref{main1} is given in  Theorem~\ref{infinite} below.

\subsection{A noncommutative version of Veksler's theorem concerning positive isometries}

The theory concerning the description of surjective isometries of commutative/noncommutative symmetric spaces has been extensively studied (see \cite{Kalton_Ran, z, FGHS,Huang_S,Yeadon,AC} and references therein).
However, the situation is more complicated when isometries
are not necessarily surjective.
In 1985, Veklser\cite{vek} obtained that if $E$ and $F$ are   function spaces and the norm of $F$ is strictly monotone (that is, $0\leq x_1\le x_2\in F$ and $x_1\ne x_2$ imply that $\left\|x_1\right\|_F<\left\|x_2\right\|_F$), then each positive isometry $T$ from $E$ into $F$ is disjointness-preserving. In particular, $T$ is of the elementary form given in  \cite{Abra}.
Note that 
the assumption  that the norm is strictly monotone  cannot be dispensed with, see, e.g.,  \cite[Example 4.7]{BHS}.

 Let $\cM$ and $\cN$ be two  semifinite von Neumann algebras with semifinite faithful normal traces $\tau$ and $\nu$, respectively.
Let  $E(\cM,\tau)$ and $F(\cN,\nu)$ be a normed $\cM$- and a normed $\cN$-bimodule, respectively.
One may  ask whether Veklser's result holds in the noncommutative setting.  
\begin{question}\label{nc v}
If $ F(\cN,\nu)$   has strictly monotone norm, then is every positive isometry from $E(\cM,\tau)$ into $F(\cN,\nu)$ necessarily disjointness-preserving?
\end{question}

Chilin et al.  \cite{CMS} studied this question in the  special setting of surjective isometries between two fully symmetric operator spaces (see also \cite{dC20}). 
In 2018, Sukochev and Veksler\cite{SV} gave an affirmative answer to the question above in the setting 
when  $\left\|\cdot\right\|_F$ is a strictly  monotone (fully symmetric) norm with respect to the Hardy--Littlewood--Polya submajorization.
In 2020,   Sukochev, Zanin and the second author of the present paper\cite{HSZ20} generalized this result to the setting when  $\norm{\cdot}_F$ is a strictly  monotone symmetric $\Delta$-norm with respect to $\log$-submajorization.
However,   
Question \ref{nc v} is not fully answered in the setting of general strictly monotone norms on $\cM$-bimodules.
The aforementioned results are based on the  condition $\mu(x+y)=\mu(x-y)$ for  $\tau$-compact operators $x,y$, which guarantees the orthogonality of $x$ and $y$ (i.e., $xy=0$)  \cite{SV,HSZ20}. 
However,  as demonstrated in  \cite[Example 4.7]{HSZ20},  $\mu(x+y)=\mu(x-y)$ does not imply $xy=0$ when 
 $0\le x,y\in S(\cM,\tau)$ are not  $\tau$-compact.  
Moreover, in order to   answer Question \ref{nc v} in  full generality for normed $\cM$-bimodules, we need to avoid using singular value functions. 
In Section~\ref{Sec:ortho}, we obtain a   necessary and sufficient condition for the orthogonality of  two positive measurable operators  (see Theorem \ref{ab=0} below), which strengthens the sufficient conditions given in \cite{SV,HSZ20} and is of interest in its own right. 
Having this at hand, we obtain the second main result of the present paper, which 
   extends and complements several  results in \cite{SV,HSZ20,vek,Abra,dC20} and answers Question \ref{nc v} in full generality.

\begin{theorem} \label{predisjoint}
Let $(\cM,\tau) $ and $ (\cN,\nu)$ be two semifinite von Neumann algebras with semifinite faithful normal traces $\tau$ and $\nu$, respectively.
Suppose that $E(\cM,\tau)$ and $F(\cN,\nu)$
are a normed $\cM$- and a norm $\cN$-bimodule, respectively.
If $\norm{\cdot}_F$ is strictly monotone, then every 
    positive isometry  $T$ from $E(\cM,\tau)$ into $F(\cN,\nu)$ is disjointness-preserving.
\end{theorem}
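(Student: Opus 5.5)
The approach is to reduce disjointness preservation to positive elements and then invoke the orthogonality criterion of Theorem \ref{ab=0} from Section~\ref{Sec:ortho}. Everything is organized around a norm identity: for disjoint $x,y\in E(\cM,\tau)$, the operators $x+y$ and $x-y$ have the same norm.

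\emph{Step 1 (reduction to positive elements).} Since $T$ is linear and positive, it suffices to prove that $Tx\,Ty=0$ whenever $0\le x,y\in E(\cM,\tau)$ and $xy=0$. The general case then follows by a standard decomposition. Write a self-adjoint $x$ as $x_+-x_-$. For a general $x$, disjointness in the bimodule sense means $l(x)\perp l(y)$ and $r(x)\perp r(y)$ for the left and right supports. Real and imaginary parts together with the polar decomposition, i.e.\ bimodule multiplication by partial isometries in $\cM$, reduce this to positive pieces. Alternatively, one shows that $T$ maps mutually orthogonal projections' corners to mutually orthogonal ones and extends linearly.

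\emph{Step 2 (key norm identity).} For $0\le x,y$ with $xy=0$, let $p=s(x)$. Then $u=p-(\mathbf 1-p)$ is a self-adjoint unitary in $\cM$, and $u(x+y)=x-y$. Since $E$ is a normed $\cM$-bimodule, $\|x-y\|_E\le\|x+y\|_E$, and by symmetry the two norms are equal. Likewise $\|\,|x-y|\,\|=\|x+y\|$ and $|x-y|=x+y$. Apply the isometry: $\|Tx-Ty\|_F=\|Tx+Ty\|_F$. Now put $a=Tx\ge0$ and $b=Ty\ge0$. Then $a+b\ge0$, and $a-b$ is self-adjoint with $-(a+b)\le a-b\le a+b$. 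Since $F$ is an $\cN$-bimodule, $\|\,|a-b|\,\|_F=\|a-b\|_F$, using the polar decomposition and bimodule contractivity.

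\emph{Step 3 (the main obstacle: extracting $ab=0$).} We need to deduce $ab=0$ from $\|\,|a-b|\,\|_F=\|a+b\|_F$ together with strict monotonicity. The plan is to use Theorem \ref{ab=0}, which I expect characterizes $ab=0$ for $a,b\ge0$ by a comparison between $|a-b|$ and $a+b$. Presumably $ab=0$ holds iff $|a-b|$ and $a+b$ are related by some partial isometry/unitary conjugation, or iff $|a-b|\ge$ something dominated by $a+b$ with equality forced.

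Concretely, I would look for an operator $c$ of the form $c=|a-b|$ or $c=w^*|a-b|w$, with $w$ a partial isometry from the polar decomposition of $a-b$, such that three things hold:
\begin{enumerate}
\item $0\le c\le a+b$;
\item $\|c\|_F=\|a-b\|_F$;
\item $c=a+b$ iff $ab=0$.
\end{enumerate}
For instance, let $a-b=v|a-b|$ with $v=e-f$, where $e,f$ are the spectral projections of $a-b$. Then $e(a-b)e\le eae\le e(a+b)e$, and similarly on $f$. Hence $|a-b|=e(a-b)e-f(a-b)f\le e(a+b)e+f(a+b)f$, and a pinching argument gives $\|e(a+b)e+f(a+b)f\|\le\|a+b\|$. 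Combining these, $\|a+b\|=\|\,|a-b|\,\|\le\|e(a+b)e+f(a+b)f\|\le\|a+b\|$.

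Strict monotonicity then forces $|a-b|=e(a+b)e+f(a+b)f$, which means $ebe=0$ and $faf=0$. From $ebe=0$ we get $b^{1/2}e=0$; together with $a^{1/2}f=0$, this should give $ab=0$. The remaining subtlety is the kernel projection $\mathbf 1-e-f$. There I would use the analogous inequality, $(\mathbf 1-e-f)(a+b)(\mathbf 1-e-f)$ must vanish, again by strict monotonicity applied to adding this corner. If needed, I would defer to the precise criterion in Theorem \ref{ab=0} to close the argument.
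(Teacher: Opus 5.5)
Your core argument (for $0\le x,y$ with $xy=0$, which is also the only case the paper's proof treats) is correct, and its skeleton matches the paper's. Both use $\norm{x+y}_E=\norm{x-y}_E$, the isometry, and a pinching inequality $|a-b|\le p(a+b)p+(\mathbf 1-p)(a+b)(\mathbf 1-p)$ whose right side has norm at most $\norm{a+b}_F$. Strict monotonicity then forces equality.

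The routes diverge at the end. The paper obtains the inequality from Bikchentaev's theorem with $p=e^{a-b}[0,\infty)$. It then applies Theorem \ref{ab=0}, whose implication (1)$\Rightarrow$(2) is proved for an \emph{arbitrary} projection commuting with $a-b$. That generality is why it needs Lemma \ref{lm1} for the diagonal blocks and the factorization Lemma \ref{factorization} to kill the off-diagonal blocks.

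You prove the inequality by hand and then read off the vanishing corners directly: $ebe=faf=0$ and $g(a+b)g=0$, where $g=\mathbf 1-e-f$. These give $a=eae$ and $b=fbf$, hence $ab=0$. This works because your projections are spectral projections of $a-b$, so $|a-b|=e(a-b)e-f(a-b)f$ is available. In fact, with the paper's own $p=e+g$, the equality $|a-b|=p(a+b)p+(\mathbf 1-p)(a+b)(\mathbf 1-p)$ amounts to $pbp+(\mathbf 1-p)a(\mathbf 1-p)=0$. So Theorem \ref{predisjoint} does not actually require Theorem \ref{ab=0}, and your closing deferral to it is unnecessary.

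Your route is shorter and self-contained; you also justify $\norm{x+y}_E=\norm{x-y}_E$ via the symmetry $2s(x)-\mathbf 1$, which the paper uses tacitly. The paper's route buys a general orthogonality criterion valid for any commuting $p$, which it presents as of independent interest.

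Two smaller points. First, the kernel step is essential, not a technicality. From $|a-b|=e(a+b)e+f(a+b)f$ alone one only gets $a=eae+gag$ and $b=fbf+gbg$ with $gag=gbg$, so $ab=(gag)^2$; take $a=b$ a nonzero projection for a counterexample. Your three-block pinching does supply $g(a+b)g=0$. Putting $g$ into $e$ from the start, as the paper does, avoids the two-stage argument.

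Second, the reduction sketched in Step 1 would not work as written. The operators $x=e_{12}$, $y=e_{21}$ in $M_2(\bC)$ satisfy $x^*y=xy^*=0$ but have the same real part. Moreover, $T$ does not intertwine multiplication by partial isometries, so the polar decomposition cannot be transported through $T$. The paper's proof likewise only handles positive $x,y$, so this is not a place where you fall short of the paper. Still, the ``standard decomposition'' should not be invoked as stated.
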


As an application of Theorem \ref{predisjoint},  when $(\cM,\tau)$ is finite,  we extend  the description of  positive isometries $T:E(\cM,\tau)\xrightarrow{\rm into}(\cN,\nu)$ given in \cite{SV,HSZ20} to the setting of strictly monotone norms (see Theorem~\ref{isoform} below), i.e., there exists a positive operator $b$ and a normal Jordan $^*$-monomorphism $J:\cM\to \cN$ such that 
$$
T(x)=bJ(x),\, x \in \cM.
$$
According to \cite[Proposition 8]{Abra1991}, the existence of a positive isometry from a symmetric function space $E$ into another  $F$ with a strictly monotone norm implies  $E\subseteq F$. 
In   Theorem~\ref{contain} below, we    establish a  noncommutative version of \cite[Proposition 8]{Abra1991}. 

At the International Conference on Banach Space Theory and its Applications at Kent, Ohio (August 1979), Pe\l czy\'nski posed the following question concerning the symmetric structure of ideals of compact operators on the Hilbert space $\ell_2$ (see also \cite[Question (B)]{Arazy81},  \cite[Problem A]{Arazy83} and \cite{HSS}):
{\it  
    Does the ideal $C_E$ of compact operators corresponding to an arbitrary separable symmetric sequence space $E$ have a unique symmetric structure?
}
One may consider an analogue of Pe\l czy\'nski's question in the sense of (positive) isometric isomorphisms on symmetric  operator spaces (see, e.g.,  \cite{FGHS} and \cite{Huang_S}) or, more generally,   on normed Calkin $\cM$-bimodules (see p. \pageref{definition normed Calkin}):
\begin{quote}
Let $\cM$ be an atomless semifinite von Neumann algebra equipped with a semifinite faithful normal trace $\tau$. 
If a symmetric operator space (more generally, a normed Calkin $\cM$-bimodule) $E(\cM,\tau)$ is isometric to another $F(\cM,\tau)$,
then does $E(0,\tau({\bf 1)}) $ coincide with $ F(0,\tau({\bf 1}))$ (as sets)?
\end{quote}
We refer to \cite{FGHS,Huang_S} for the 
  case of symmetric operator spaces. 
 Theorem \ref{contain}  gives an affirmative answer to this question for normed Calkin $\cM$-bimodules with strictly monotone norms in the sense of positive isometric isomorphisms. 

\section{Preliminaries}

In this section,
we recall some basic facts and notions which are needed for the proofs of the main results of this paper. 
General information on von Neumann
algebras and noncommutative symmetric  spaces  can be found in
\cite{LSZ,KPS,LT2}.
\subsection{$\tau$-measurable operators and  singular valued function }

Suppose that $a\in (0,\infty]$,  $I = (0,a)$ and $\Sigma$ is the $\sigma$-algebra of Lebesgue measurable subsets of~$I$. By $(I,m)$ we denote the measure space $(I,\Sigma,m)$ equipped with the
Lebesgue measure $m$. 
Let $S(I,m)$ (or $S(I)$ for brevity) be the space of all  (equivalent classes of)  finite Lebesgue measurable
real-valued functions on $I$. For $x\in S(I)$, we denote by
$\mu(x)$ the decreasing rearrangement of the function $|x|$~\cite{KPS,LSZ,LT2}. That is,
$$
\mu(t; x)=\inf \left\{s\geq0:\ m(\{|x|>s\})\leq t \right\},\quad t>0.
$$

Let $\mathcal{M}$ be a   von Neumann algebra on a   Hilbert space $\cH$.
Let ${\bf 1}$ be the identity.
Let $P(\mathcal{M})$ denote the lattice of all projections in $\mathcal{M}$ and
$U(\cM)$ denote the the set of all unitary elements in $\cM$.
The set of all self-adjoint elements in $\cM$ is denoted by $\cM_h$ and the set of all positive elements in $\cM$ is denoted by $\cM_+$.
For each self-adjoint operator $x$ affiliated with $\cM$,
we denote its spectral measure by $\{e^x\}$.
We say that a linear operator  $x$ is \emph{measurable} (denoted by $x \in S(\cM)$) if and only if $x$ is closed,
densely defined, affiliated with $\cM$, and $e 
^{|x|}
(\lambda,\infty )$ is a finite projection in $\cM$ for some $\lambda >0$. It follows
immediately that in the case when $\cM$ is a von Neumann algebra of type $III$ or a type $I$ factor,
we have $S(\cM, \tau) =\cM$. For type $II$ von Neumann algebras, this is no longer true\cite{LSZ,DPS}.

Let $\cM$ be a semifinite von Neumann algebra on a  Hilbert space $\cH$ equipped with a faithful normal semifinite trace $\tau$.
A measurable operator $x $ affiliated with $\cM$ is called \emph{$\tau$-measurable} if
$\tau\left(e ^{|x|}(\lambda,\infty)\right)<\infty$ for sufficiently large $\lambda$.
We denote the set of all $\tau$-measurable operators by
$S(\mathcal{M},\tau)$, which is a unital $^*$-algebra with respect to
strong sums and products (denoted simply by $x+y$ and $xy$ for all $x,y\in S(\cM,\tau)$)\cite{LSZ,DPS}.
The set of all self-adjoint elements in $S(\cM,\tau)$ is denoted by $S_h(\cM,\tau)$ and the set of all 
positive elements in $S_h(\cM,\tau)$ is denoted by $S(\cM,\tau)_+$.

For any closed and densely defined linear operator $x$, the null projection $n(x)=n(|x|)$ is the projection onto its kernel Ker$(x)$.
The left support projection $l(x)$ of $x$ is the projection onto the closure of its range Ran$(x)$ and the right support projection $r(x)$ of $x$ is defined by $r(x)=\mathbf{1}-n(x)$, which is the projection onto the closure of Ran$(x^*)$.

It is well known that if $x$ is a closed operator affiliated with $\cM$ with the polar decomposition $x=u|x|$, then $u\in U(\cM)$.
Moreover, $u^*u=l(x^*)=r(x)$ and $uu^*=l(x)=r(x^*)$\cite{DPS}.

The two-sided ideal $\mathcal{F}(\tau)$ in $\cM$ consisting of all elements with $\tau$- finite range projections is defined by
\[
\mathcal{F}(\tau)=\{x\in \cM: \tau(r(x))<\infty\}= \{x\in \cM: \tau(l(x))<\infty\}.
\]

For any $x \in S(\mathcal{M}, \tau)$,  the {\it spectral 
distribution function} of $|x|$ is defined by setting
\[
d_{|x|}(s) = \tau\bigl(e^{|x|}(s, \infty)\bigr), \quad s > 0.
\]
Note that $d_{|x|}$ is a right-continuous function (see, e.g., \cite{DPS}).
The {\it singular value function} is defined to be the right-continuous inverse of the spectral distribution function $d_{|x|}$, that is,
\[
\mu(t; x) = \inf\bigl\{ s \geq 0 : d_{|x|}(s) \leq t\bigr\}.
\]

\subsection{Symmetric spaces}\label{s:symmetric}

	Let $E(I,m)$ (or $E(I)$ for brevity) be a Calkin function space
	on $I$.
    If $E(I)$ is  equipped with a norm $\norm{\cdot}_E$ having property that
	 $\mu(x)\leq\mu(y)$ implies that 
        $\norm{x}_E\leq\norm{y}_E$ for arbitrary 
        $x, y\in E(I)$,
    then it is called a symmetrically normed function space.
    Moreover, if $E(I)$ is a Banach space, then it is called a {\it symmetric function space}.

	A linear subspace $\cE$ of $S(\mathcal{M},\tau)$ is called an {\it$\cM$-bimodule} of $\tau$-measurable operators (briefly, an  $\cM$-bimodule if no confusion may arise) if $uxv \in \cE$ whenever $x \in \cE$ and $u, v \in \cM$. 
    
    If $\cE$ is equipped with a (semi-)norm $\norm{\cdot}_{\cE}$ satisfying
	\[
	\norm{uxv}_{\cE} \leq \norm{u}_{L_\infty(\cM)}\norm{v}_{L_\infty(\cM)}\norm{x}_{\cE}, \quad x \in \cE, \, u, v \in \cM,
	\]
	then $\cE$ is called a (semi-)normed $\cM$-bimodule of $\tau$-measurable operators (briefly, a (semi-)normed $\cM$-bimodule).

	Let $E(\cM,\tau)$ be a Calkin operator space. 
If $E(\cM,\tau)$ is equipped with a norm $\norm{\cdot}_E$ satisfying $\mu(x)\leq\mu(y)$ implies that 
    $\norm{x}_E\leq\norm{y}_E$ for arbitrary 
    $x, y\in E(\cM,\tau)$, then it is called a symmetrically normed operator space. Moreover, if $E(\cM,\tau)$ is a Banach space, then it is called a symmetric operator space.

It should be mentioned that an arbitrary  Calkin operator space  $  E(\cM,\tau)\subseteq S(\mathcal{M},\tau)$ is an $\cM$-bimodule and an arbitrary symmetrically normed operator space is automatically a normed $\cM$-bimodule \cite[Proposition 4.4.3]{DPS}.
 
Each normed $\cM$-bimodule $\cE$ is an {\it absolutely solid space} of $S(\mathcal{M},\tau)$ in the following sense.
	If $x \in S(\mathcal{M},\tau)$ and $y \in \cE$ satisfy $|x| \leq |y|$ in $S(\cM, \tau)$, then $x \in \cE$\cite[Proposition 4.1.3]{DPS}.



Let $(E,\left\|\cdot\right\|_{E})$ be a symmetrically normed   function   space on $(0,\tau({\bf 1}))$. The operator space $E(\cM,\tau)$ defined by
\begin{align*}
    E(\mathcal{M},\tau):=\Big\{x\in S(\mathcal{M},\tau):\ \mu(x)\in E\Big\},
    \quad
    \norm{x}_{E (\mathcal{M},\tau)}:=\norm{\mu(x)}_E
\end{align*}
is naturally  a symmetrically normed operator space. We note that $E(\mathcal{M},\tau)$  is a Banach space with respect to $\left\|\cdot\right\|_{E (\mathcal{M},\tau)}$ if $E$ is Banach, and it is called the (non-commutative) symmetric operator space associated
with $(\mathcal{M},\tau)$ corresponding to $(E,\left\|\cdot\right\|_{E})$\cite{Kalton_S, LSZ}. 
When $E=L_p(0, \infty)$,   
$1 \leq p \leq \infty$, the corresponding noncommutative space $E(\cM, \tau) = L_p( \cM, \tau)$ is the non-commutative $L_p$ space. 

\subsection{Measure topology and order convergence}\label{subsec 2}

For convenience of the reader, we also recall the definition of the measure topology $t_m$ on the algebra $S(\cM,\tau)$. For every $\varepsilon,\delta>0$, we define the neighborhood
\[
V(\varepsilon,\delta)=\{x\in S(\cM,\tau):\exists  p\in P(\cM) \,\text{such that} \, \norm{x(\mathbf{1}-p)}_{L_\infty(\cM)}\leq \varepsilon, \tau(p)\leq \delta\}.
\]
The collection $\{V(\varepsilon,\delta):\varepsilon,\delta>0\}$ is a neighborhood base at zero  for a complete metrizable Hausdorff vector space topology $t_m$ on $S(\cM,\tau)$  (see, e.g., \cite{DPS}).
If a net $\{x_i\}_{i\in I}$ in $S(\cM,\tau)$ converges to the operator $x\in S(\cM,\tau)$ in $t_m$,
then this is denoted by $x_i\xrightarrow{t_m}x$ and the net $\{x_i\}_{i\in I}$ is said to converge to $x$ in measure.


The measure topology can also be characterized in terms of the singular value function.
For a net $\{x_i\}_{i\in I} \subset S(\cM,\tau)$, 
$x_i \xrightarrow{t_m}  0$ if and only if  
\begin{align}\label{m-mu}
    \mu(t;x_i )\to  0,\quad   t>0.
\end{align}

Recall the definition of local measure topology. 
For every $\varepsilon,\delta>0$, $e\in P(\cM)$ with $\tau(e)<\infty$, we define 
\[
V(\varepsilon,\delta,e)=\{x\in S(\cM):exe\in V(\varepsilon,\delta)\}.
\]
The collection $\{V(\varepsilon,\delta,e)\}$ forms a neighborhood base at zero for a Hausdorff vector space topology $t_{lm}$ in $S(\cM)$ (see \cite[Proposition 2.7.4]{DPS}).
Observe that a net $\{x_i\}_{i\in I}\subset S(\cM)$ converges to $x\in S(\cM)$ in the local measure topology (denoted by $x_i\xrightarrow{t_{lm}}x$ and said to converge locally in measure),
if and only if $ex_i e\xrightarrow{t_{m}}exe$ for every $e\in P(\cM)$ with $\tau(e)<\infty$.

The following result is essentially known to experts. However, due to the lack of suitable references, we provide a complete proof below. 
\begin{lemma}\label{lm}
Let   $\{x_i\}_{i\in I}$ be a net in $S(\cM)$.
If $|x_i|\xrightarrow{t_{lm}} 0 $, then  $x_i\xrightarrow{t_{lm}}0 $.
\end{lemma}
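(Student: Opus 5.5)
Write $x_i=u_i|x_i|$ for the polar decompositions, so each $u_i$ is a partial isometry in $\cM$ with $\|u_i\|\le 1$. Fix a projection $e\in P(\cM)$ with $\tau(e)<\infty$ and $\varepsilon,\delta>0$. We must show that $ex_ie\in V(\varepsilon,\delta)$ eventually. The obstacle is that $eu_i|x_i|e$ involves the uncompressed $|x_i|$, while $t_{lm}$-convergence only controls $e|x_i|e$. The natural route is through the square root: $|x_i|^{1/2}e$ is controlled by $e|x_i|e$, because
$$
\bigl(|x_i|^{1/2}e\bigr)^*\bigl(|x_i|^{1/2}e\bigr)=e|x_i|e .
$$

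\textbf{Step 1.} Since $e|x_i|e\xrightarrow{t_m}0$, we get $\mu(t;e|x_i|e)\to 0$ for every $t>0$ by \eqref{m-mu}. Using $\mu(t;y^*y)=\mu(t;y)^2$ with $y=|x_i|^{1/2}e$, this gives $\mu(t;|x_i|^{1/2}e)\to0$, that is, $|x_i|^{1/2}e\xrightarrow{t_m}0$.

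\textbf{Step 2.} Write
$$
ex_ie=\bigl(eu_i|x_i|^{1/2}\bigr)\bigl(|x_i|^{1/2}e\bigr).
$$
The second factor tends to $0$ in measure by Step 1. The first factor $a_i:=eu_i|x_i|^{1/2}$ is not obviously small, so we need a bound that is uniform on a tail. Note that $a_i^*=|x_i|^{1/2}u_i^*e$, and $u_i^*e$ is a contraction in $\cM$. Thus $\mu(a_i)$ is controlled by $\mu(|x_i|^{1/2}r(u_i^*e))$. This is not directly controlled, and is the main obstacle.

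\textbf{Alternative for Step 2.} Replace $\|u_i\|\le 1$ by support arguments. Estimate $|ex_ie|^2=e|x_i|u_i^*eu_i|x_i|e\le e|x_i|^2e$. Then it suffices to show that $e|x_i|^2e\to0$ in measure. Write $|x_i|e=|x_i|^{1/2}\bigl(|x_i|^{1/2}e\bigr)$, and truncate with the spectral projection $q_i=e^{|x_i|}[0,\lambda]$. On $q_i$ we have $\||x_i|^{1/2}q_i\|\le\lambda^{1/2}$, so
$$
\mu(t;q_i|x_i|e)\le \lambda^{1/2}\mu(t;|x_i|^{1/2}e)\to0 .
$$

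\textbf{Controlling the tail.} It remains to handle $(\mathbf 1-q_i)|x_i|e$. Its right support is $r\bigl((\mathbf 1-q_i)e\bigr)$, whose trace is at most $\tau(e)$, which is finite but not small. We need instead that it is eventually small. The plan is to use the equivalence $l(ye)\sim r(ye)$ applied to $y=(\mathbf 1-q_i)$, where $r((\mathbf 1-q_i)e)\le e$ and $(\mathbf 1-q_i)e\neq0$ only when $e|x_i|e$ has large values. Indeed, for a vector $\xi\in e\cH$ with $(\mathbf 1-q_i)\xi$ large, one has $\langle e|x_i|e\xi,\xi\rangle\ge\lambda\|(\mathbf 1-q_i)\xi\|^2$. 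Combining this with the small-$\mu$ of $e|x_i|e$, the projection $e\wedge$ (the region where $\|(\mathbf 1-q_i)\cdot\|$ exceeds $\tfrac12$) has trace at most $\delta$ eventually. Off that region, $\|(\mathbf 1-q_i)\xi\|\le\tfrac12\|\xi\|$, and iterating over dyadic levels of $\lambda$ yields the required bound. This compression-versus-spectral-projection estimate is the delicate point, and the plan is to make it precise via the inequality $\tau\bigl(e\wedge(\mathbf 1-q)\bigr)$-type bounds from \cite{DPS}. Once $ex_ie\in V(\varepsilon,\delta)$ eventually, we conclude $x_i\xrightarrow{t_{lm}}0$.
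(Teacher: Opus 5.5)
Your Step 1 is correct and matches the first step of the paper's proof. Your diagnosis in Step 2 is also right: $(eu_i|x_i|^{1/2})(eu_i|x_i|^{1/2})^*=e|x_i^*|e$, so the size of the first factor is governed by $e|x_i^*|e$, and the hypothesis says nothing about that. This obstacle cannot be overcome, because for arbitrary nets the statement is false. So neither your alternative route nor any other can be completed.

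Take $\cM=B(\ell_2)$ with the standard trace. Then $S(\cM)=\cM$, $t_m$ is the norm topology, and $x_i\xrightarrow{t_{lm}}0$ means $\|ex_ie\|\to0$ for every finite-rank projection $e$. Fix a unit vector $\eta$ and let $e_0$ be the projection onto $\mathbb{C}\eta$. Index by pairs $i=(F,n)$, where $F\subset\ell_2$ is finite-dimensional and $n\in\mathbb{N}$, ordered componentwise. Pick a unit vector $\omega_i\perp F+\mathbb{C}\eta$ and set $\zeta_i=\eta+n\omega_i$ and $x_i\xi=\langle\xi,\zeta_i\rangle\eta$. Then $|x_i|\xi=\|\zeta_i\|^{-1}\langle\xi,\zeta_i\rangle\zeta_i$ and $|x_i^*|=(1+n^2)^{1/2}e_0$. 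If $\mathrm{ran}(e)\subseteq F$, then $e\zeta_i=e\eta$, so $\|e|x_i|e\|\le(1+n^2)^{-1/2}$; hence $|x_i|\xrightarrow{t_{lm}}0$. But $e_0x_ie_0=e_0$ for every $i$, so $x_i$ does not converge to $0$ in $t_{lm}$.

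In this example both of your fallback steps fail. First, your reduction target fails, since $e_0|x_i|^2e_0=e_0$. Second, the tail estimate fails. The bound $\langle e|x_i|e\xi,\xi\rangle\ge\lambda\|(\mathbf 1-q_i)\xi\|^2$ only controls $\|(\mathbf 1-q_i)\xi\|$, which is of the size of $\||x_i|^{1/2}\xi\|$; what you need is control of $\||x_i|(\mathbf 1-q_i)\xi\|$. On the level $e^{|x_i|}(2^k\lambda,2^{k+1}\lambda]$ the best available bound is $2(2^k\lambda)^{1/2}\langle|x_i|\xi,\xi\rangle^{1/2}$, which grows with $k$, so the dyadic iteration diverges. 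Concretely, once $\|\zeta_i\|>\lambda$ one has $\|(\mathbf 1-q_i)\eta\|=(1+n^2)^{-1/2}\to0$, while $(\mathbf 1-q_i)|x_i|\eta=\zeta_i/\|\zeta_i\|$ is a unit vector.

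The paper's own proof has exactly the same gap. It asserts ``similarly'' that $\mu(t;e|x_i^*|^{1/2})\to0$, which would require $e|x_i^*|e\xrightarrow{t_m}0$, and that fails here.

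What is true is the version with a tail domination $|x_i|\le y$ for a fixed $y\in S(\cM,\tau)$. This is all that Proposition \ref{o-lm} needs, since there $|x_i-x|\le u_i\le u_{i_0}$ for $i\ge i_0$. Under this domination, $\mu(t;eu_i|x_i|^{1/2})^2=\mu(t;eu_i|x_i|u_i^*e)\le\mu(t;|x_i|)\le\mu(t;y)$. Your Step 1 together with $\mu(2t;ex_ie)\le\mu(t;eu_i|x_i|^{1/2})\,\mu(t;|x_i|^{1/2}e)$ then finishes the proof. The statement is also trivially true when $\tau(\mathbf 1)<\infty$, since then $t_{lm}=t_m$ and $\mu(x_i)=\mu(|x_i|)$.
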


\begin{proof}
    For each $e\in P(\cM)$ with $\tau(e)<\infty$, we have 
    $$\big||x_i|^{\frac{1}{2}}e\big|^2=e|x_i|e\xrightarrow{t_{m}}0.$$
    It follows from \cite[Proposition 3.2.11(i) and Proposition 3.2.8]{DPS} and \eqref{m-mu} that  for each $t>0$, 
    \begin{align}\label{mu1}
        \mu\left(t;|x_i|^{\frac{1}{2}}e\right)=\mu\left(t;\big||x_i|^{\frac{1}{2}}e\big|^2\right)^{\frac{1}{2}}\to 0.
    \end{align}
Similarly,  for each $t>0$,  we have 
\begin{align}\label{mu2}
    \mu\left(  t; e|x_i^*|^{\frac{1}{2}} \right)\to  0.
\end{align}
Let  $x_i=u_i|x_i|$ be the polar decomposition. 
Observe that $ u_i | x_i|  u_i^* = |x_i^*| $  (see, e.g., \cite[p. 19]{DPS}). 
    We have $ u_i | x_i|^{1/2}  u_i^* =(u_i | x_i|  u_i^*)^{1/2}= |x_i^*|^{1/2}$. 
    
    For each $t>0$, it follows from \cite[Proposition 3.2.7(iv)(vi)]{DPS} that
    \begin{align*}
    \mu(2t;ex_i e)&\leq  \mu(t; e u_i|x_i|^{\frac{1}{2}})\mu(t;|x_i|^{\frac{1}{2}}e)=\mu(t; e |x_i^*|^{\frac{1}{2}}u_i )\mu(t;|x_i|^{\frac{1}{2}}e)\\
    &\le \mu(t; e |x_i^*|^{\frac{1}{2}})\mu(t;|x_i|^{\frac{1}{2}}e)
    \xrightarrow{\eqref{mu1}\eqref{mu2}}  0,
    \end{align*}
    which implies that $ex_i e\xrightarrow{t_{m}}0$ (see \eqref{m-mu}).
    Since $e$ is arbitrary, it follows from the definition of the local measure topology that 
    \[
    x_i\xrightarrow{t_{lm}} 0.
    \]
    This completes the proof. 
\end{proof}
\begin{remark}
The converse of the above lemma does not hold. 
That is, there exists a net $\{x_i\}_{i\in I} \subset S(\cM,\tau)$ converging locally in measure to zero but we do not have 
$|x_i| \xrightarrow{t_{lm}} 0$, see a concrete  example  in \cite[Remark 2.7.8]{DPS}.
\end{remark}
 
In the classical setting, a net $\{x_\alpha\}_{\alpha\in A}$ in a partially ordered set is said to be order convergent to $x$ (denoted by $x_\alpha\xrightarrow{(o)}x$), whenever there exist two nets $\{y_\beta\}_{\beta\in B}$ and $\{z_\gamma\}_{\gamma\in \Gamma}$ such that:
\begin{enumerate}
    \item $y_\beta\uparrow x$ and $z_\gamma \downarrow x$.
    \item For each $\beta \in B$ and $\gamma \in \Gamma$, there exist some $\alpha_0\in A$ satisfying $y_\beta \leq x_\alpha \leq z_\gamma$ for all $\alpha>\alpha_0$.
\end{enumerate}
For an order bounded net $\{x_\alpha\}_{\alpha\in A}\subset E$ in a Dedekind complete Riesz space $E$, $x_\alpha\xrightarrow{(o)}x$ in $E$ if and only if there exists a net $\{u_\alpha\}_{\alpha\in A}$ such that $u_\alpha\downarrow 0$ and $|x_\alpha-x|\leq u_\alpha$ for each $\alpha \in A$ (see \cite[Lemma 1.19]{AA}).

 Let $(\cM,\tau)$ be a semifinite von Neumann algebra with a semifinite faithful normal trace $\tau$.
We define order convergence as follows.
\begin{definition}
Let $\cE$ be an $\cM$-bimodule.
    A net $\{x_i\}_{i\in I}\subset \cE$ is said to be order convergent to $x$ (denoted by $x_i\xrightarrow{(o)}x$) whenever there exists a net $\{u_i\}_{i\in I}\subset \cE_+$ such that $u_i\downarrow 0$ and $|x_i-x|\leq u_i$ for each $i \in I$.
\end{definition}

By \cite[Proposition 2.7.6 (iv)(v)]{DPS} and Lemma \ref{lm}, we have the following proposition, which implies that the limit of an order convergent net is unique.

\begin{proposition}\label{o-lm}
    Let  $\cE$ be an $\cM$-bimodule. For any net $\{x_i\}\subset \cE$,  $x_i\xrightarrow{(o)}x$ implies $x_i\xrightarrow{t_{lm}}x$.
\end{proposition}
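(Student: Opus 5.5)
The plan is to reduce order convergence to the statement of Lemma \ref{lm} together with the known facts \cite[Proposition 2.7.6 (iv)(v)]{DPS}. Suppose $x_i\xrightarrow{(o)}x$ in $\cE$. By definition there is a net $\{u_i\}_{i\in I}\subset \cE_+$ with $u_i\downarrow 0$ and $|x_i-x|\le u_i$ for every $i$. Set $y_i:=x_i-x\in\cE\subseteq S(\cM,\tau)$. It suffices to show $y_i\xrightarrow{t_{lm}}0$, since $t_{lm}$ is a vector space topology.

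\emph{Step 1.} Show that $u_i\xrightarrow{t_{lm}}0$. This is where \cite[Proposition 2.7.6 (v)]{DPS} is used: a decreasing net of positive $\tau$-measurable operators with infimum $0$ converges to $0$ locally in measure. One point needs care. The infimum in the definition is taken in $\cE$, whereas the cited fact concerns the infimum in $S(\cM,\tau)$. I would check that these agree, because $\cE$ is an ideal-type (solid) subspace. If $0\le z\le u_i$ for all $i$ with $z\in S(\cM,\tau)$, then $z\in\cE$, since $\cE$ is an $\cM$-bimodule. Indeed $z=|z|\le |u_{i_0}|$, and one can write $z^{1/2}=w u_{i_0}^{1/2}$ with $w\in\cM$ a contraction, so $z=u_{i_0}^{1/2}w^*w\,u_{i_0}^{1/2}$. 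It is cleaner to factor as $z=a u_{i_0} a^*$ with $a\in\cM$, a standard lemma. Hence the infimum in $\cE$ being $0$ forces the infimum in $S(\cM,\tau)$ to be $0$.

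\emph{Step 2.} Pass from $u_i$ to $|y_i|$ via \cite[Proposition 2.7.6 (iv)]{DPS}: if $0\le a_i\le b_i$ and $b_i\xrightarrow{t_{lm}}0$, then $a_i\xrightarrow{t_{lm}}0$. If only monotonicity of $\mu$ on compressions is available, argue directly. For each $e\in P(\cM)$ with $\tau(e)<\infty$ we have $0\le e|y_i|e\le eu_ie$, so $\mu(t;e|y_i|e)\le\mu(t;eu_ie)\to0$ for all $t>0$, and by \eqref{m-mu} $e|y_i|e\xrightarrow{t_m}0$. Thus $|y_i|\xrightarrow{t_{lm}}0$.

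\emph{Step 3.} Apply Lemma \ref{lm} to the net $\{y_i\}$ to get $y_i\xrightarrow{t_{lm}}0$, that is, $x_i\xrightarrow{t_{lm}}x$. Uniqueness of order limits then follows from $t_{lm}$ being Hausdorff.

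The main obstacle is Step 1, namely identifying the infimum in $\cE$ with that in $S(\cM,\tau)$ and invoking the correct normality statement for decreasing nets. The remaining steps are routine monotonicity of singular values.
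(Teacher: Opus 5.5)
Your proposal is correct and follows the paper's own argument: the paper obtains the proposition in one line from \cite[Proposition 2.7.6 (iv)(v)]{DPS}, which gives $u_i\xrightarrow{t_{lm}}0$ and hence $|x_i-x|\xrightarrow{t_{lm}}0$, followed by Lemma \ref{lm}, and this is exactly your Steps 1--3. Your extra check in Step 1 that the infimum may be taken in $S_h(\cM,\tau)$ rather than in $\cE$ (the paper does not raise this point) handles only positive lower bounds, which is not enough on its own because $z\le u_i$ does not imply $z_+\le u_i$ in the noncommutative setting; to finish it, note that the decreasing net $\{u_i\}\subset S(\cM,\tau)_+$ has an infimum in $S_h(\cM,\tau)$, that this infimum is itself a positive lower bound, and that your solidity argument then places it in $\cE$, so it must be $0$.
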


Let $(\cM,\tau) $ and $ (\cN,\nu)$ be two semifinite von Neumann algebras with semifinite faithful normal traces $\tau$ and $\nu$, respectively.
Suppose that $E(\cM,\tau)$ and $F(\cN,\nu)$
are an $\cM$- and an $\cN$-bimodule, respectively. 
$T:E(\cM,\tau)\xrightarrow{\rm into} F(\cN,\nu)$ is a linear mapping.
    \begin{enumerate}
        \item $T$ is said to be  order continuous if $x_i\xrightarrow{(o)}x$ implies $T(x_i)\xrightarrow{(o)}T(x)$ for any net $\{x_i\}_{i\in I}\subset E(\cM,\tau)$.
        \item $T$ is said to be  normal if $T(x)\ge 0$  for all $0\le x\in E(\cM,\tau)$ and  $x_i\uparrow x$ implies  $T(x_i)\uparrow T(x)$ for any net $\{x_i\}_{i\in I}\subset E(\cM,\tau)_+$.
        \item For an arbitrary net $\{x_i\}_{i\in I}\subset E(\cM,\tau)$ satisfying $x_i\xrightarrow{(o)}x\in E(\cM,\tau)$, 
        if $T(x_i)\xrightarrow{t_{m}} T(x)$ (respectively, $T(x_i)\xrightarrow{t_{lm}} T(x)$), then $T$ is said to be order-measure continuous (respectively,  order-local measure continuous), or 
     o-$t_{m}$ continuous 
          (respectively, o-$t_{lm}$ continuous) for short.
    \end{enumerate}

It follows from Proposition \ref{o-lm} that an order (or local measure) continuous mapping is necessarily o-$t_{lm}$ continuous.
By \cite[Proposition 2.7.6]{DPS}, it is readily verified that any positive o-$t_{lm}$ continuous mapping is necessarily normal.

\subsection{Jordan $^*$-homomorphisms}\label{s:Jordan}

Let $(\cM,\tau) $ and $ (\cN,\nu)$ be two semifinite von Neumann algebras with semifinite faithful normal traces $\tau$ and $\nu$, respectively.
A complex-linear mapping $J:\cM\to\cN$ (respectively, $S(\cM,\tau)\to S(\cN,\nu)$)
is called a {\it Jordan $^*$-homomorphism} if 
\[
J(x^2) = J(x)^2 \, \text{and} \, J(x^*) = J(x)^* ,\, x \in \cM\,\text{(respectively, $S(\cM,\tau)$)}.
\]

We call $J$ a \textit{Jordan $^*$-monomorphism} if it is injective,
and a \textit{Jordan $^*$-isomorphism} if it is bijective.
Further details regarding von Neumann algebras and Jordan homomorphisms may be
found in \cite{KR, BraRobinbook}.

\begin{proposition}[{see, e.g., \cite[Proposition 2.14]{HSZ20}}]\label{Jcom}
If $J$ is a Jordan $^*$-homomorphism from $\cM$ into $\cN$, then for any commuting $x,y\in \cM$, we have 
\begin{align*}
J(xy)=J(x)J(y)=J(y)J(x).
 \end{align*}
\end{proposition}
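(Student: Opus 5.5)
The key point is that $x$ and $y$ commute, so they live in a common abelian von Neumann subalgebra, and on that subalgebra a Jordan $^*$-homomorphism behaves like a $^*$-homomorphism. The standard starting identity is the polarization of $J(a^2)=J(a)^2$. Applying it to $a=x+y$ and using linearity gives
\[
J(xy+yx)=J(x)J(y)+J(y)J(x),\qquad x,y\in\cM .
\]
For commuting $x,y$ this reads $2J(xy)=J(x)J(y)+J(y)J(x)$. It therefore suffices to prove that $J(x)$ and $J(y)$ commute, since then $J(xy)=J(x)J(y)=J(y)J(x)$.

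The first step is the case of projections. Let $p,q$ be commuting projections. Then $J(p)$ and $J(q)$ are projections, because $J(p)^2=J(p^2)=J(p)$ and $J(p)^*=J(p)$. From the second Jordan identity $J(aba)=J(a)J(b)J(a)$, which follows from polarization in the standard way, we get $J(p)J(q)J(p)=J(pqp)=J(pq)$. This element is self-adjoint. The product $e\,f\,e$ of two projections equals $J(pq)$, and $J(pq)$ is itself a projection because $pq$ is a projection. Writing $e=J(p)$ and $f=J(q)$, we have $efe=(fe)^*(fe)$. By symmetry, $fef=J(pq)$ as well. Hence
\[
(ef-fe)^*(ef-fe)=efe\cdot(\text{terms})\ldots .
\]
The cleaner route is to compute $\|fe-efe\|^2$:
\[
(fe-efe)^*(fe-efe)=efe-efe-efe+efefe=efefe-efe .
\]
Since $efe=J(pq)$ is a projection, $efefe=(efe)^2=efe$, so this is $0$. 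Hence $fe=efe$ is self-adjoint, and so $fe=ef$.

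The second step handles linear combinations of projections. Let $x=\sum\lambda_ip_i$ and $y=\sum\mu_jq_j$ be finite combinations of mutually commuting spectral projections; commuting $x,y$ can be chosen this way inside the abelian von Neumann algebra $W^*(x,y)$. By linearity and the first step, $J(x)$ and $J(y)$ commute.

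The last step, which I expect to be the main obstacle, passes to general commuting $x,y$. First reduce to self-adjoint elements. The real and imaginary parts of $x$ and $y$ all commute when $x$ and $y$ are normal, but general commuting $x,y$ need not be normal. To handle this, I would use the fact that a Jordan $^*$-homomorphism is bounded, being positive. Then approximate self-adjoint commuting $x,y$ in norm by spectral step functions from their joint abelian algebra, and pass to the limit in the commutator using norm continuity. For commuting non-normal $x,y$, I would instead argue directly with the identity $J(xy+yx)=\dots$ together with Kadison's decomposition $J=\pi_1+\pi_2$ into a $^*$-homomorphism and a $^*$-anti-homomorphism. This decomposition gives
\[
J(xy)=\pi_1(x)\pi_1(y)+\pi_2(y)\pi_2(x)=\pi_1(x)\pi_1(y)+\pi_2(x)\pi_2(y)=J(x)J(y),
\]
where the middle equality uses $xy=yx$. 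The cross terms vanish because $\pi_1$ and $\pi_2$ have orthogonal central supports. Given this, the decomposition route is actually the cleanest proof overall; it is the one I would present, citing Kadison's (or Størmer's) theorem for von Neumann algebras.
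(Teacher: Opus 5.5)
Your final argument is correct: take $z$ to be the central projection of the von Neumann algebra generated by $J(\cM)$ given by the Kadison--St\o rmer decomposition, so that $J(\cdot)z$ is a $^*$-homomorphism and $J(\cdot)(\mathbf{1}-z)$ is a $^*$-anti-homomorphism; then the cross terms $J(x)z\,J(y)(\mathbf{1}-z)$ vanish, and $xy=yx$ gives $J(xy)=J(x)J(y)=J(y)J(x)$ --- this is the standard proof of the fact, which the paper does not prove itself but only cites from \cite[Proposition 2.14]{HSZ20}. The preliminary projection and norm-approximation steps are superfluous, since they only reach normal $x,y$, as you observed; also, the middle terms in your expansion of $(fe-efe)^*(fe-efe)$ should be $efefe$ rather than $efe$, which is harmless because $efefe=efe$.
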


If $J$ is a Jordan $^*$-homomorphism, then for any self-adjoint $x\in \cM$, $J(x)$ is self-adjoint. It is well-known \cite[p. 211]{BraRobinbook} that every Jordan $^*$-homomorphism is positive, i.e., if $x\geq0$, then $J(x)\geq0$.

If $J:\cM\rightarrow\cN$ is a Jordan $^*$-isomorphism, then $J$ is necessarily normal\cite[Appendix A]{RR}.

\begin{remark}[{see, e.g., \cite[Remark 2.16]{HSZ20}}]\label{Jsubalg}
    Assume that $J:\cM\rightarrow\cN$ is a normal Jordan $^*$-homomorphism.  
Then   $J(\cM)$ is a von Neumann subalgebra  of 
$J(\mathbf{1})\cN J(\mathbf{1})$. 
In particular, if $J$ is   injective,
then $J$ is a normal Jordan $^*$-isomorphism from $\cM$ onto $J(\cM)$.
\end{remark}

The following proposition provides a characterization of the continuity of Jordan $^*$-homomorphisms in the measure topology.
\begin{proposition}[see, e.g., {\cite[Theorem 3.13]{weigt}}{\cite[Proposition 4.7(i)]{Lab}}]\label{Jmeasure}
     If $J:\cD(J)\rightarrow\cN$ is a Jordan $^*$-homomorphism such that its domain $\cD(J)$ is a $^*$-subalgebra of $\cM$ containing all projections of finite trace,
then the following statements are equivalent.
\begin{enumerate}
    \item $J$ is continuous in measure topology.
    \item For each $\varepsilon>0$, there exists some $\delta>0$ such that $\nu(J(p))\leq\varepsilon$ whenever $p\in P(\cM)$ and $\tau(p)<\delta$.
\end{enumerate}
\end{proposition}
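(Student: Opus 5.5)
The plan is to prove the two implications separately. Throughout I use that $J$ maps projections to projections: for $p\in P(\cM)\cap\cD(J)$, $J(p)^*=J(p)$ and $J(p)^2=J(p^2)=J(p)$. I also use that $\cD(J)$ is a $^*$-subalgebra, so Proposition~\ref{Jcom} applies to commuting elements of $\cD(J)$ (its proof is purely algebraic).

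\emph{(i)$\Rightarrow$(ii).} I argue by contradiction. Suppose there are $\varepsilon>0$ and projections $p_n\in P(\cM)$ with $\tau(p_n)<1/n$ but $\nu(J(p_n))>\varepsilon$. Since $\tau(p_n)<\infty$, each $p_n$ lies in $\cD(J)$. Moreover $p_n\in V(\eta,1/n)$ for every $\eta>0$ (take $p=p_n$ in the definition), so $p_n\xrightarrow{t_m}0$. By (i), $J(p_n)\xrightarrow{t_m}0$, that is, $\mu(t;J(p_n))\to0$ for every $t>0$ by \eqref{m-mu}. But $J(p_n)$ is a projection, so $\mu(t;J(p_n))=1$ for all $t<\nu(J(p_n))$. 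Taking $t=\varepsilon/2$ gives a contradiction.

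\emph{(ii)$\Rightarrow$(i).} Since $J$ is linear and $t_m$ is a vector topology, it suffices to prove continuity at $0$. The key step is: given $\varepsilon>0$, let $\eta>0$ be arbitrary and let $\delta$ be the number given by (ii) for $\eta$; I claim that every self-adjoint $x\in\cD(J)\cap V(\varepsilon,\delta/2)$ satisfies $J(x)\in V(\varepsilon,\eta)$.
\begin{itemize}
\item From $x\in V(\varepsilon,\delta/2)$ we get $\mu(\delta/2;x)\le\varepsilon$. Hence the spectral projection $p=e^{|x|}(\varepsilon,\infty)$ satisfies $\tau(p)\le\delta/2<\delta$, and in particular $p\in\cD(J)$.
\item Since $p$ commutes with $x$, Proposition~\ref{Jcom} gives $J(xp)=J(x)J(p)=J(p)J(x)$. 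Therefore
\[
J(x)\bigl(\mathbf 1-J(p)\bigr)=J(x)-J(xp)=J\bigl(x(\mathbf 1-p)\bigr).
\]
\item The element $y=x(\mathbf 1-p)$ is self-adjoint, lies in $\cD(J)$, and has $\|y\|_\infty\le\varepsilon$.
\end{itemize}
If $\|J(y)\|_\infty\le\|y\|_\infty$, then the projection $J(p)$, with $\nu(J(p))\le\eta$, witnesses $J(x)\in V(\varepsilon,\eta)$. For general $x$ I would write $x=\mathrm{Re}\,x+i\,\mathrm{Im}\,x$; both parts lie in $\cD(J)$ and tend to $0$ in measure, because $\mu(x^*)=\mu(x)$ and $t_m$ is a vector topology. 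Adding the two neighbourhoods, and using that $V(\varepsilon,\eta)+V(\varepsilon,\eta)\subseteq V(2\varepsilon,2\eta)$, finishes the argument.

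The main obstacle is the norm estimate $\|J(y)\|_\infty\le\|y\|_\infty$ for self-adjoint $y\in\cD(J)\cap\cM$. The difficulty is that $\cD(J)$ need not be a C$^*$-algebra, so contractivity of Jordan $^*$-homomorphisms between C$^*$-algebras cannot be quoted directly. My first attempt is to restrict $J$ to the commutative $^*$-algebra $\cA$ of polynomials in $y$ without constant term. On $\cA$, Proposition~\ref{Jcom} makes $J$ a $^*$-homomorphism, and $J(f(y))=f(J(y))$ for every such polynomial $f$. Now suppose $\lambda\in\sigma(J(y))$ with $|\lambda|>\|y\|_\infty$. Choose polynomials $f_k$ with $f_k(0)=0$ that are uniformly small on $\sigma(y)\cup\{0\}$ but satisfy $|f_k(\lambda)|\ge1$ (Runge/Weierstrass on the compact set $[-\|y\|,\|y\|]\cup\{\lambda\}$). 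Then $f_k(y)$ is small in norm, while $\|f_k(J(y))\|\ge1$. This does not by itself contradict positivity, which is why I would instead use positivity of $J$ on $\cA$. Since $\varepsilon^2y^{2}-y^{4}$ is positive in $\cA$, we get $J(y)^2\bigl(\varepsilon^2-J(y)^2\bigr)\ge0$. Hence $\sigma(J(y))\subseteq[-\varepsilon,\varepsilon]$, i.e. $\|J(y)\|_\infty\le\varepsilon$. I expect this positivity-based spectral argument to be the delicate point. If it does not go through cleanly, I would fall back on the standard contractivity of Jordan $^*$-homomorphisms from \cite{BraRobinbook}.
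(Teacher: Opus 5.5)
Your proof of (i)$\Rightarrow$(ii) is correct, and the reduction in (ii)$\Rightarrow$(i) is sound. That reduction consists of three steps. First, you cut at $p=e^{|x|}(\varepsilon,\infty)$ with $\tau(p)<\delta$. Second, you use the identity $J(x)(\mathbf{1}-J(p))=J(x(\mathbf{1}-p))$. For a self-adjoint $x$ commuting with a projection $p$ this identity is indeed algebraic: $[x,p]^2$ is a Jordan polynomial in $x,p$, so $[J(x),J(p)]^2=J([x,p]^2)=0$, and a skew-adjoint operator with zero square vanishes. Third, you pass to real and imaginary parts.

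The genuine gap is exactly the step you flagged, the estimate $\|J(y)\|_\infty\le\|y\|_\infty$. From $\varepsilon^2y^2-y^4\ge 0$ you conclude $J(\varepsilon^2y^2-y^4)\ge 0$, but this presupposes that $J$ is positive on $\cD(J)$. For a Jordan $^*$-homomorphism, positivity comes only from writing a positive element as $w^2$ with $w=w^*$ in the domain. Here $w=|y|(\varepsilon^2-y^2)^{1/2}$ is not a polynomial in $y$, and an arbitrary $^*$-subalgebra is not closed under this functional calculus. The fallback to \cite{BraRobinbook} does not help, since that contractivity result concerns C$^*$-algebras.

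In fact no argument can close this step at the stated generality. Take $\cM=\cN=L_\infty(0,1)$ with Lebesgue measure. Let $\cD(J)$ be the $^*$-algebra of finite sums $\sum_i\chi_{A_i}p_i(t)$, where $\{A_i\}$ is a measurable partition of $(0,1)$ and the $p_i$ are polynomials, and put $J\bigl(\sum_i\chi_{A_i}p_i(t)\bigr)=\sum_i\chi_{A_i}p_i(2)$.
\begin{itemize}
\item $J$ is well defined, since a polynomial vanishing on a set of positive measure is zero.
\item $J$ is a $^*$-homomorphism with $J(p)=p$ for every projection, so (ii) holds.
\item Yet $x_n=\sum_{k=1}^n\chi_{((k-1)/n,k/n)}(t-k/n)$ satisfies $\|x_n\|_\infty\le 1/n$ while $J(x_n)\ge 1$, so (i) fails.
\item Moreover $J(t(1-t))=-2\cdot\mathbf{1}$, so $J$ is not even positive.
\end{itemize}

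Thus (ii)$\Rightarrow$(i) needs an extra hypothesis. One option is that $\cD(J)$ be a C$^*$-subalgebra, for instance $\cD(J)=\cM$, which is the only case used (via Proposition~\ref{Jnormea}). Another is that $J$ be positive. Under such a hypothesis your argument is complete as written: $w\in\cD(J)$ is self-adjoint and $\varepsilon^2J(y)^2-J(y)^4=J(w)^2\ge 0$, hence $\sigma(J(y))\subseteq[-\varepsilon,\varepsilon]$.

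The paper gives no proof of its own and only cites \cite{weigt} and \cite{Lab}. The hypothesis on the domain should therefore be checked against those sources.
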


Using this proposition, we obtain the following result.

\begin{proposition}\label{Jnormea}
Let $\cM$ and $\cN$ be two von Neumann algebras equipped with finite faithful normal trace $\tau$ and $\nu$, respectively. 
If $J:\cM\to \cN$ is a Jordan $^*$-homomorphism, then $J$ is normal if and only if $J$ is continuous in measure topology. 
\end{proposition}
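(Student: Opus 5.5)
Both directions will go through Proposition \ref{Jmeasure}, by checking its condition (ii) in each case. Since $\tau$ and $\nu$ are finite, $\cM$ itself consists of operators with $\tau$-finite support, so $\cD(J)=\cM$ is an admissible domain for Proposition \ref{Jmeasure}. Thus $J$ is $t_m$-continuous if and only if for every $\varepsilon>0$ there is $\delta>0$ with $\nu(J(p))\le\varepsilon$ whenever $p\in P(\cM)$ and $\tau(p)<\delta$.

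\emph{Normal $\Rightarrow$ continuous in measure.} Assume $J$ is normal and (ii) fails. Then there are $\varepsilon>0$ and projections $p_n$ with $\tau(p_n)<2^{-n}$ and $\nu(J(p_n))>\varepsilon$. Set $q_n=\bigvee_{k\ge n}p_k$. Then $\tau(q_n)\le 2^{-n+1}$, and the $q_n$ decrease to some $q$ with $\tau(q)=0$, so $q=0$ because $\tau$ is faithful. Jordan $^*$-homomorphisms map projections to projections and are positive, so $p_n\le q_n$ gives $J(p_n)\le J(q_n)$. Both are projections, so $\nu(J(q_n))\ge\nu(J(p_n))>\varepsilon$. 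On the other hand, normality gives $J(q_n)\downarrow J(0)=0$ strongly, and $\nu$ is normal and finite, so $\nu(J(q_n))\to0$. This is a contradiction.

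\emph{Continuous in measure $\Rightarrow$ normal.} Let $x_i\uparrow x$ in $\cM_+$; we must show $J(x_i)\uparrow J(x)$. Positivity of $J$ makes $J(x_i)$ increasing and bounded by $J(x)$, so $J(x_i)\uparrow y\le J(x)$ for some $y\in\cN_+$. Since $\tau$ is finite, $x_i\uparrow x$ implies $x_i\to x$ in measure. One way to see this is that $\tau(x-x_i)\to0$ by normality of $\tau$, and a Chebyshev-type estimate converts trace convergence into measure convergence. By continuity, $J(x_i)\to J(x)$ in $t_m(\cN)$. Also $J(x_i)\uparrow y$, so $J(x_i)\to y$ in measure, since $\nu$ is finite and the same argument applies. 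Hausdorffness of $t_m$ then gives $y=J(x)$.

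I expect no step to be a real obstacle. The only points needing care are two facts used above: that Jordan $^*$-homomorphisms send projections to projections, which follows from $J(p)=J(p^2)=J(p)^2$ and $J(p)^*=J(p)$, and that monotone convergence implies convergence in measure for a finite trace. For the latter, write $z_i=x-x_i\downarrow0$. Then $\tau(z_i)\to0$, and $\tau\bigl(e^{z_i}(\lambda,\infty)\bigr)\le\lambda^{-1}\tau(z_i)$ places $z_i$ in $V(\lambda,\lambda^{-1}\tau(z_i))$ for every $\lambda>0$, which yields $z_i\to0$ in $t_m$.
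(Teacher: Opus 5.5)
Your proof is correct and follows the paper's argument. The normal $\Rightarrow$ continuous direction is the same $q_n=\bigvee_{k\ge n}p_k$ contradiction fed into Proposition \ref{Jmeasure}. For the converse, the paper simply cites standard results giving the two facts you verify by hand: monotone nets converge in measure when the trace is finite, and the increasing limit is identified with the measure limit.
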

\begin{proof}
Assume that $J$ is continuous in measure topology. 
By \cite[Theorem 2.6.3 and Proposition~2.6.1(iii)]{DPS}, we have $J$ is normal.

Assume that $J$ is normal.
Let $\{p_n\}_{n\ge1}$ be an arbitrary  sequence  in  $ P(\cM)$.
We claim that $\tau(p_n)\rightarrow 0$ as $n\to \infty $ implies $\nu(J(p_n))\rightarrow 0$  as $n\to \infty $.
Indeed, 
assume that there exists a subsequence $\{p_{n_k}\}_{k\ge 1} \subset\{p_n\}_{n\ge1} $ and a constant $\varepsilon_0$ such that $\nu(J(p_{n_k}))>\varepsilon_0$ for each $ k\in \mathbb{N}$.
Without loss of generality, we may   denote the subsequence by $\{p_n\}_{n\ge 1}$ and assume that $\tau(p_n)\leq \frac{1}{2^n}$. Define 
\[
q_n:=\bigvee_{k=n}^\infty p_k.
\]
Noting that $q_n\downarrow$ and $\tau(q_n)\leq \sum\limits_{k=n}^\infty\frac{1}{2^n}=\frac{1}{2^{n-1}}$, we have $q_n\downarrow0$ as $n\to \infty$.
Since $J$ is normal, it follows that $J(q_n)\downarrow0$, which together with $\nu(J(\mathbf{1}))<\infty$ yields that
$$\nu(J(q_n))\downarrow0.$$
However, $\nu(J(q_n))\geq \nu(J(p_n))>\varepsilon_0$, which is a contradiction. This proves our claim.
By Proposition \ref{Jmeasure}, we have $J$ is continuous in measure topology.

\end{proof}

It is well known that $\cM$ is dense in $S(\cM,\tau)$ with respect to measure topology\cite[Proposition 2.5.4]{DPS}. Hence, if $J:\cM\rightarrow \cN$ is a unital Jordan $^*$-homomorphism which is continuous in the measure topology, then  $J$ can be uniquely extended to a Jordan $^*$-homomorphism $\hat{J}:S(\cM,\tau)\rightarrow S(\cN,\nu)$.
If $J$ is a Jordan  $^*$-isomorphism, then so is $\hat{J}$ (see, e.g., the proofs in \cite[Proposition 2.15]{HSZ20} and \cite[Proposition 3.4(2)]{BHS}).
If $J$ is normal, then $\hat{J}$ is normal\cite[Proposition 3.3]{BHS}.

\section{The elementary form of order-local measure continuous disjointness-preserving mappings}

The main purpose of this section is to establish a general description of o-$t_{lm}$ continuous and disjointness-preserving mappings on $\cM$-bimodules of $\tau$-measurable operators. 

Let $(\cM,\tau) $ and $ (\cN,\nu)$ be two semifinite von Neumann algebras with semifinite faithful normal traces $\tau$ and $\nu$, respectively.
Suppose that $E(\cM,\tau)$ and $F(\cN,\nu)$
are an $\cM$- and an $\cN$-bimodule, respectively. 
A linear mapping $T:E(\cM,\tau)\xrightarrow{\rm into} F(\cN,\nu)$ is said to be  disjointness-preserving if $$T(x)^*T(y)=T(x)T(y)^*=0$$ whenever $x,y\in E(\cM,\tau)$ with $x^*y=xy^*=0$. In terms of  left and right support projections, the mapping $T$ is disjointness-preserving if and only if it preserves the disjointness of left and right support projections, i.e., $l(T(x))l(T(y))=r(T(x))r(T(y))=0$ whenever $x,y\in E(\cM,\tau)$ with $l(x)l(y)=r(x)r(y)=0$.

We now present a  generalization of the representation in \cite[Theorem 3.1]{HSZ20} to $\cM$-bimodules.

\begin{theorem}\label{normal}
    Suppose that 
    $(\cM,\tau)$ is a von Neumann algebra with a finite faithful normal trace $\tau$ and $(\cN,\nu)$ is a semifinite von Neumann algebra with a semifinite faithful normal trace $\nu$.
    Let $E(\mathcal{M}, \tau)$ and $F(\cN,\nu)$ be an 
    $\cM$- and  an $\cN$-bimodule, respectively.
    If $T: E(\cM, \tau)\xrightarrow{\rm into} F(\cN, \nu)$ 
     is positive and disjointness-preserving, then there exists a Jordan $^*$-homomorphism $J:\cM\to \cN$ such that for each $x\in \cM$,
     \[
     T(x)=T(\mathbf{1})J(x)
     \]
     and $J(x)$ commutes with $T(\mathbf{1})$.
If, in addition, $\nu(J(\mathbf{1}))<\infty$ and $T$ is normal, then $J$ can be extended to a normal Jordan $^*$-homomorphism from $S(\cM,\tau)$ into $S(\cN,\nu)$ such that for each $x\in E(\cM,\tau)$,
    \[
    T(x)=T(\mathbf{1})J(x)
    \]
    and $T(\mathbf{1})$ commutes with $J(x)$.
\end{theorem}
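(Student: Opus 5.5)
The plan is to reduce to the bounded part and follow the approach of \cite[Theorem 3.1]{HSZ20}. Since $\tau$ is finite, $\cM\subseteq E(\cM,\tau)$: every bimodule containing a nonzero element contains a nonzero projection, and then all of $\cM$ by absolute solidity and the bimodule property. (If $E=\{0\}$ there is nothing to prove.) So $T$ restricts to a positive, disjointness-preserving map $\cM\to F(\cN,\nu)\subseteq S(\cN,\nu)$. Put $a:=T(\mathbf{1})\ge0$ and $s:=l(a)=r(a)$.

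\textbf{Step 1: projections.} Let $p\in P(\cM)$. Then $p$ and $\mathbf{1}-p$ are disjoint, so $T(p)$ and $T(\mathbf{1}-p)$ are positive with orthogonal supports. Since they sum to $a$, both commute with $a$, and
\[
T(p)=a\,q_p,\qquad q_p:=l(T(p)),
\]
where $q_p\le s$ is a spectral-type projection commuting with $a$. Define $J(p):=q_p$. From $q_p+q_{\mathbf{1}-p}=s$ we get additivity on orthogonal projections: if $p_1\perp p_2$ then $q_{p_1+p_2}=q_{p_1}+q_{p_2}$, and these are mutually orthogonal. All $q_p$ commute with $a$.

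\textbf{Step 2: extension to $\cM$.} First extend $J$ linearly to finite linear combinations of mutually orthogonal projections, i.e.\ self-adjoint elements with finite spectrum. On commuting families this is a $*$-homomorphism, and $T(x)=aJ(x)$ holds there by linearity. Next, $J$ is contractive in operator norm on each such finite-spectrum self-adjoint element, since $J(x)$ is a combination of orthogonal projections with the same coefficients. By spectral approximation in norm, $J$ extends to $\cM_h$ and then complex-linearly to $\cM$.

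The main obstacle is showing this extension is well defined and linear on non-commuting elements. The identity $aJ(x)=T(x)$ only determines $J(x)$ on $s$, and the range of each $q_p$ lies under $s$. The approach is:
\begin{enumerate}
\item Define $J(x)$ as the unique operator supported in $s$ with $aJ(x)=T(x)$. Uniqueness holds because $a$ is injective on $s$.
\item Boundedness of $J(x)$ comes from $-\|x\|a\le T(x)\le\|x\|a$ for self-adjoint $x$, which follows from positivity of $T$.
\item Linearity then follows from linearity of $T$ and uniqueness. Conversely, norm continuity of $T|_{\cM}$ into $t_m$ follows from the same order bound.
\end{enumerate}
With linearity in hand, Jordan multiplicativity $J(x^2)=J(x)^2$ for self-adjoint $x$ follows from the commutative case via spectral approximation. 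Commutation of $J(x)$ with $a$ follows since each $q_p$ commutes with $a$.

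\textbf{Step 3: normal extension.} Assume $\nu(J(\mathbf{1}))<\infty$ and $T$ normal. First show $J$ is normal on $\cM$. If $p_i\uparrow p$, then $aq_{p_i}=T(p_i)\uparrow T(p)=aq_p$, and the supports satisfy $q_{p_i}\uparrow q_p$ since support projections of an increasing net of positive operators converge to the support of the supremum. Normality of $J$ on $\cM$ then follows. Next, pass to $J(\mathbf{1})\cN J(\mathbf{1})$, which has the finite trace $\nu$. There Proposition~\ref{Jnormea} gives continuity of $J$ in measure, so $J$ extends to $\hat J:S(\cM,\tau)\to S(\cN,\nu)$ as discussed after that proposition, and $\hat J$ is normal by \cite[Proposition 3.3]{BHS}.

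Finally, for $0\le x\in E(\cM,\tau)$, put $x_n:=x\,e^{x}[0,n]\uparrow x$. Normality of $T$ gives $T(x_n)\uparrow T(x)$. Also $aJ(x_n)\uparrow a\hat J(x)$, because $a$ commutes with the increasing net $J(x_n)$. Hence
\[
T(x)=a\hat J(x)\quad\text{for } 0\le x\in E(\cM,\tau),
\]
and by linearity this extends to all of $E(\cM,\tau)$. Commutation of $a$ with $\hat J(x)$ passes to the limit.
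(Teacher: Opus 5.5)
Your route is the paper's. The first part reconstructs the argument of [HSZ20, Theorem 3.1] that the paper invokes, with $J(p)=l(T(p))=r(T(p))$ as in the paper's use of [HSZ20, Eq.(24)]. Step~3 is the paper's extension argument: normality of $J$, Remark~\ref{Jsubalg} and Proposition~\ref{Jnormea}, then density of $\cM$ in measure. Step~1 is fine.

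\textbf{The gap is in Step~2, at the point you call the main obstacle.} In (i) you prove only \emph{uniqueness} of an operator $J(x)$ supported in $s$ with $aJ(x)=T(x)$. Existence of a bounded such operator, for $x\in\cM_h$ without finite spectrum, is never established, and (ii) cannot supply it. Noncommutatively, $-\|x\|a\le T(x)\le\|x\|a$ only gives $T(x)=a^{1/2}ka^{1/2}$ with $\|k\|\le\|x\|$, not $T(x)=aj$ with $j$ bounded. For instance, with $a=\mathrm{diag}(1,\varepsilon)$ and $y=\left(\begin{smallmatrix}0&\sqrt{\varepsilon}\\ \sqrt{\varepsilon}&0\end{smallmatrix}\right)$ one has $-a\le y\le a$ but $\|a^{-1}y\|=\varepsilon^{-1/2}$. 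A direct sum of such blocks with $\varepsilon_n\to0$ gives an injective $a$ for which no bounded $j$ with $aj=y$ exists.

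What is missing is that $T(x)$ commutes with $a$ for every $x\in\cM_h$; this is also the content of the clause ``$J(x)$ commutes with $T(\mathbf{1})$'', so it cannot come at the end as you place it. Your own ingredients give it:
\begin{enumerate}
\item Take finite-spectrum $x_n\in W^*(x)$ with $\|x-x_n\|\to0$. Since $J$ is linear and contractive on the finite-spectrum part of the abelian algebra $W^*(x)$, the sequence $J(x_n)$ converges in norm to some $J(x)$ supported in $s$ and commuting with $a$.
\item If $-\varepsilon a\le y\le\varepsilon a$, then $0\le y_\pm\le\varepsilon\, e_\pm ae_\pm$ with $e_+=e^{y}(0,\infty)$ and $e_-=e^{y}(-\infty,0)$. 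Hence $\mu(2t;y)\le 2\varepsilon\,\mu(t;a)$.
\item Applying this to $y=T(x-x_n)$ gives $T(x_n)\to T(x)$ in measure. Passing to the limit in $aJ(x_n)=T(x_n)$ then yields $aJ(x)=T(x)$.
\end{enumerate}
Only after this does your uniqueness argument give linearity of $J$ across non-commuting elements. The Jordan property then follows as you indicate.

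\textbf{Two smaller points.}

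First, your justification of $\cM\subseteq E(\cM,\tau)$ is false when $\cM$ has nontrivial centre. For a central projection $0\ne z\ne\mathbf{1}$, the space $zS(\cM,\tau)$ is a nonzero $\cM$-bimodule not containing $\mathbf{1}$. The statement tacitly assumes $\mathbf{1}\in E(\cM,\tau)$, since it refers to $T(\mathbf{1})$, and you should simply use that.

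Second, in Step~3 the operator $a$ is unbounded, so $aJ(x_n)=a^{1/2}J(x_n)a^{1/2}\uparrow a\hat J(x)$ is not the standard fact about bounded compressions of increasing nets. Justify it as the paper does, through measure convergence in $J(\mathbf{1})\cN J(\mathbf{1})$, where $\nu$ is finite. There $J(x_n)\to\hat J(x)$ in measure, so $aJ(x_n)\to a\hat J(x)$ in measure, and an increasing net converging in measure increases to its limit.
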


\begin{proof}
    Arguing  mutatis mutandis as in the  proof of \cite[Theorem 3.1]{HSZ20},
    there exists a Jordan $^*$-homomorphism $J:\cM\to \cN$ such that for each $x\in \cM$,
     \[
     T(x)=T(\mathbf{1})J(x)
     \]
     and $J(x)$ commutes with $T(\mathbf{1})$. 
In particular, if $T$ is normal, then $J$ is normal.     

Consider the case when $\nu(J(\mathbf{1}))<\infty$ and $T$ is normal.
 By Remark \ref{Jsubalg} and Proposition \ref{Jnormea},  $J$ is continuous in the measure topology. Since $\cM$ is dense in $S(\cM,\tau)$ with respect to measure topology, it follows that
$J$ can be uniquely extended to a normal Jordan $^*$-homomorphism
from $S(\cM,\tau)$ into $S(J({\bf 1}) \cN J({\bf 1}) ,\nu)\subset S(\cN,\nu)$ (see  Section \ref{s:Jordan}).

For each $x\in E(\cM,\tau)$, observing that $x$ is a linear combination of four positive elements in $E(\cM,\tau)_+$, to prove the second  statement of the theorem, without loss of generality, we may assume that $x\in E(\cM,\tau)_+$. 
There exists a net $\{x_i\}_{i\in I}\subset \cM_+$ such that $x_i\uparrow x$, which together with \cite[Proposition 2.6.3]{DPS} yields that $x_i\xrightarrow{t_{m}} x$. 
It follows from the continuity of $J$ in measure that $J(x_i)\xrightarrow{t_{m}} J(x)$. 
By \cite[Proposition 2.6.11]{DPS}, we have
\begin{align}\label{meacon}
T(x_i)=T(\mathbf{1})J(x_i)\xrightarrow{t_m}T(\mathbf{1})J(x).
\end{align}
On the other hand, since  $T$ is normal, it follows from 
  \cite[Theorem 2.6.3]{DPS} that 
\[
T(x_i)\xrightarrow{t_{m}}T(x),
\]
which together with \eqref{meacon} yields that 
\[
T(x)=T(\mathbf{1})J(x),\quad  x\in E(\cM,\tau).
\]
Observing that $T(\mathbf{1})J(x)= J(x ) T(\mathbf{1})$ for all $x\in \cM$, it follows from the continuity of $J$ in measure that  $T(\mathbf{1})$ commutes with $J(x)$ for each $x\in E(\cM,\tau)$.
\end{proof}

Using Theorem \ref{normal}, we establish a general representation of o-$t_{lm}$ continuous disjointness-preserving mappings on $\cM$-bimodules.

\begin{theorem}\label{positive}
    Suppose that 
    $(\cM,\tau)$ is a von Neumann algebra with a finite faithful normal trace $\tau$ and $(\cN,\nu)$ is a semifinite von Neumann algebra with a semifinite faithful normal trace $\nu$.
    Let $E(\mathcal{M}, \tau)$ and $F(\cN,\nu)$ be an
    $\cM$- and an $\cN$-bimodule, respectively.
    If
     $T: E(\cM, \tau)\xrightarrow{\rm into} F(\cN, \nu)$ 
     is o-$t_{lm}$ continuous  and disjointness-preserving, 
     then there exists a normal Jordan $^*$-homomorphism $J:\cM\to \cN$ and a partial isometry $w$ such that
     \[
     T(x)=w|T(\mathbf{1})|J(x), \quad  x\in \cM.
    \]
    Moreover, $|T(\mathbf{1})|$ commutes with $J(x)$ for each $x\in \cM$.

    In particular, if $\nu(J(\mathbf{1}))<\infty$, then $J$ can be extended to a normal Jordan $^*$-homomorphism from $S(\cM,\tau)$ into $S(\cN,\nu)$ such that
    \[
    T(x)=w|T(\mathbf{1})|J(x), \quad  x\in E(\cM,\tau)
    \]
     and $|T(\mathbf{1})|$ commutes with $J(x)$ for each $x\in E(\cM,\tau)$.
\end{theorem}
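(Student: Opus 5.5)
The strategy is to reduce to the positive case, Theorem \ref{normal}, by factoring $T$ through a partial isometry. Let $T(\mathbf{1})=w|T(\mathbf{1})|$ be the polar decomposition, and define
\[
S(x):=w^*T(x),\quad x\in E(\cM,\tau).
\]
I want to show that $T(x)=wS(x)$ and that $S$ is positive, disjointness-preserving and normal. Then Theorem \ref{normal} gives a normal Jordan $^*$-homomorphism $J$ with $S(x)=S(\mathbf{1})J(x)=|T(\mathbf{1})|J(x)$, where $J(x)$ commutes with $|T(\mathbf{1})|$. This yields both assertions, including the extension to $E(\cM,\tau)$ when $\nu(J(\mathbf{1}))<\infty$.

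\emph{Step 1 (supports).} For a projection $p\in\cM$, the projections $p$ and $\mathbf{1}-p$ are disjoint, and $T(\mathbf{1})=T(p)+T(\mathbf{1}-p)$. Disjointness preservation gives $l(T(p))\perp l(T(\mathbf{1}-p))$ and $r(T(p))\perp r(T(\mathbf{1}-p))$. Hence $l(T(p))\le l(T(\mathbf{1}))=ww^*$ and $r(T(p))\le r(T(\mathbf{1}))$. Moreover, $T(p)$ is the ``block'' of $T(\mathbf{1})$:
\[
T(p)=l(T(p))\,T(\mathbf{1})=T(\mathbf{1})\,r(T(p)).
\]
By linearity and norm density of simple elements, this extends to $l(T(x))\le ww^*$ for $x\in\cM$. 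Order continuity then extends it to all of $E(\cM,\tau)$: if $\cM_+\ni x_i\uparrow x$, then $T(x_i)\to T(x)$ in $t_{lm}$, and $ww^*T(x_i)=T(x_i)$ is preserved under $t_{lm}$-limits. Consequently $T=wS$.

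\emph{Step 2 (positivity of $S$).} For a projection $p$, set $q_p:=r(T(p))$. From Step 1,
\[
w^*T(p)=w^*T(\mathbf{1})q_p=|T(\mathbf{1})|q_p .
\]
The projections $q_p$ and $q_{\mathbf{1}-p}$ are orthogonal and sum to $r(T(\mathbf{1}))$. Since $|T(\mathbf{1})|q_p=w^*l(T(p))w\,|T(\mathbf{1})|$, the operator $|T(\mathbf{1})|$ commutes with $q_p$. Thus $|T(\mathbf{1})|q_p\ge0$, i.e.\ $S(p)\ge0$. This is the main obstacle: one must carefully verify that $w^*l(T(p))w=q_p$, using $w^*l(T(p))T(\mathbf{1})=w^*T(p)$ and the block structure. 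My first attempt would be to show that $l(T(p))$ and $w q_p w^*$ coincide, since both are the left support of $T(\mathbf{1})q_p$. Positivity on linear combinations of orthogonal projections then follows, and norm density together with o-$t_{lm}$ continuity gives $S\ge0$ on $E(\cM,\tau)_+$.

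\emph{Step 3 (disjointness and normality of $S$).} Since $S=w^*T$ with $l(T(x))\le ww^*$, we have
\[
l(S(x))=w^*l(T(x))w, \qquad r(S(x))=r(T(x)),
\]
so disjointness passes from $T$ to $S$. Also, $S$ is o-$t_{lm}$ continuous, because left multiplication by $w^*$ is $t_{lm}$-continuous. As remarked after the definitions, a positive o-$t_{lm}$ continuous map is normal. Finally, apply Theorem \ref{normal} to $S$, noting that $S(\mathbf{1})=|T(\mathbf{1})|$.
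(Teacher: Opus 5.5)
Your overall plan is the paper's: pass to $S=w^*T$ with $w$ the phase of $T(\mathbf{1})$, show that $S$ is positive, disjointness-preserving and o-$t_{lm}$ continuous (hence normal), check that $T=wS$, and apply Theorem~\ref{normal} with $S(\mathbf{1})=|T(\mathbf{1})|$. Steps 1 and 3 are sound, apart from a small remark at the end. The gap is in Step 2, at exactly the point you flag as the main obstacle, and your suggested verification there is circular.

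Write $A:=|T(\mathbf{1})|$. Then $l(T(\mathbf{1})q_p)=w\,l(Aq_p)\,w^*$. For a projection $q_p\le r(A)$, the equality $l(Aq_p)=q_p$ holds exactly when $Aq_p=q_pAq_p$, that is, when $A$ commutes with $q_p$. So the claim that $l(T(p))$ and $wq_pw^*$ are both the left support of $T(\mathbf{1})q_p$ restates what you need to prove; it does not prove it.

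The missing step is to square. Disjointness preservation gives $T(p)^*T(\mathbf{1}-p)=0$, so
\[
|T(\mathbf{1})|^2=|T(p)|^2+|T(\mathbf{1}-p)|^2 .
\]
The two summands have orthogonal supports $q_p$ and $q_{\mathbf{1}-p}$. Hence $q_p$ commutes with $|T(\mathbf{1})|^2$, and therefore with $|T(\mathbf{1})|$. In fact $|T(\mathbf{1})|=|T(p)|+|T(\mathbf{1}-p)|$, so $S(p)=|T(\mathbf{1})|q_p=|T(p)|\ge 0$.

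You can also close it from your own identities. Put $P:=w^*l(T(p))w$. You have $PA=Aq_p$, and its adjoint is $AP=q_pA$. Then $A^2q_p=APA=q_pA^2$.

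The squared identity is precisely the paper's \eqref{modu}. The paper then uses uniqueness of the polar decomposition to get $u_{p+q}=u_p+u_q$, hence $T(p)=u_{\mathbf{1}}|T(p)|$; this is your $S(p)=|T(p)|$ combined with $T=wS$.

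The small remark concerns Step 1. $T$ is not assumed norm-bounded, so ``norm density'' needs justification. A uniformly convergent sequence in $\cM$ is order convergent in $E(\cM,\tau)$, being dominated by $\varepsilon_n\mathbf{1}\downarrow 0$, so o-$t_{lm}$ continuity still applies. Alternatively, work directly with the spectral step approximants $x_n$ of $0\le x\in E(\cM,\tau)$, as the paper does.
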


\begin{proof}
We only need to consider the situation when $T\not\equiv0$.
For each projection $e\in P(\cM)$, 
we denote the polar decomposition of 
$T(e)$ by
$T(e)=u_e|T(e)|$. In particular, $T(\mathbf{1})=u_\mathbf{1}|T(\mathbf{1})|$.
Suppose that  $p,q\in P(\cM)$ satisfies $pq=0$.
Since $T$ is disjointness-preserving, it follows that
$r(T(q))r(T(p))=l(T(q))l(T(p))=0$.
In particular, we have 
\begin{equation}\label{modu}
\begin{aligned}
&|T(p+q)|
=\left(\left(T(p)^*+T(q)^*\right)\left(T(p)+T(q)\right)\right)^{\frac{1}{2}}
=\left(|T(p)|^2+|T(q)|^2\right)^{\frac{1}{2}}\\
=&\left(|T(p)|^2+\left|T(p)\right|\left|T(q)\right|+\left|T(q)\right|\left|T(p)\right|+|T(q)|^2\right)^{\frac{1}{2}}
=|T(p)|+|T(q)|.
\end{aligned}
\end{equation}
Noting  that
\[
r(u_p)=l(|T(p)|)=r(T(p)),\quad l(u_p)=l(T(p)),
\]
\[
r(u_q)=l(|T(q)|)=r(T(q)),\quad l(u_q)=l(T(q)),
\]
we have $u_p|T(q)|=u_q|T(p)|=0$.
Thus,
\begin{align*}
T(p+q)&=T(p)+T(q)=u_p|T(p)|+u_q|T(q)|\\
&=(u_p+u_q)(|T(p)|+|T(q)|)
\stackrel{\eqref{modu}}{=}(u_p+u_q) |T(p+q)|.
\end{align*}
Moreover, since 
\[
r\left(u_p\right)r(u_q)=l(u_p)l(u_q)=r(|T(p)|)r(|T(q)|)=l(|T(p)|)l(|T(q)|)=0,
\]
it follows that  $u_p+u_q$ is a partial isometry with 
$r(u_p+u_q)=r(u_p)+r(u_q)=l(|T(p)|)+l(|T(q)|)=l(|T(p)|+|T(q)|)$.
By the uniqueness of polar decomposition (see, e.g., \cite[Theorem 1.7.3]{DPS}),
we have
\begin{align*}
u_{p+q}=u_p+u_q  .
\end{align*}
Hence, 
given $a,b\in \mathbb{R}$, by \eqref{modu}, we have
\begin{align}\label{linear}
T(ap+bq)=u_{p+q}(|aT(p)|+|bT(q)|),\quad|T(ap+bq)|=|aT(p)|+|bT(q)|.
\end{align}
Therefore,
\begin{align}\label{T(p)}
    T(p)=T(p+0(\mathbf{1}-p))
    \stackrel{\eqref{linear}}{=}u_\mathbf{1}|T(p)|,
\quad  p\in P(\cM).
\end{align}

For each $0\leq x\in E(\cM,\tau)$, define $x_n:=\sum\limits_{k=1}^{n^2}\frac{k-1}{n}e^x(\frac{k-1}{n},\frac{k}{n}]$.
Since
\begin{align*}
T(x_n)&=\sum\limits_{k=1}^{n^2} \frac{k-1}{n}T\left(e^x\left(\frac{k-1}{n},\frac{k}{n}\right]\right)\stackrel{\eqref{T(p)}}{=}
   \sum\limits_{k=1}^{n^2} \frac{k-1}{n}u_\mathbf{1}\left|T\left(e^x\left(\frac{k-1}{n},\frac{k}{n}\right]\right)\right|
 \\& \stackrel{\eqref{linear}}{=}
    u_\mathbf{1}\left|T\left(\sum\limits_{k=1}^{n^2} \frac{k-1}{n}e^x\left(\frac{k-1}{n},\frac{k}{n}\right]\right)\right|
=u_\mathbf{1}|T(x_n)|,    
\end{align*}
it follows that $u_\mathbf{1}^*T(x_n)\ge 0$.

Noting that $x_n\xrightarrow{(o)}x$ as $n\to \infty $,
it follows from the o-$t_{lm}$ continuity of $T$ that
$T(x_n)\xrightarrow{t_{lm}}T(x)$,
which together with \cite[Proposition 2.7.5 and Proposition 2.7.6(i)]{DPS} yields that 
\begin{align}\label{Tlmcon}
    u_\mathbf{1}^*T(x_n)\xrightarrow{t_{lm}}u_\mathbf{1}^*T(x)\ge 0.
\end{align}
Hence, $u_\mathbf{1}^*T(\cdot)$ is    disjointness-preserving and positive on $E(\cM,\tau)$.
Moreover, it is o-$t_{lm}$ continuous, which implies that 
$u_\mathbf{1}^*T(\cdot)$ is normal (see  Section \ref{subsec 2}).
By Theorem~\ref{normal}, the proof is complete.
\end{proof}

If $T$ in Theorem \ref{positive} is not necessarily o-$t_{lm}$ continuous, but $E(\cM,\tau)$ has order continuous norm (i.e., $\norm{x_\alpha}_E\downarrow0$ whenever $0\leq x_\alpha\downarrow0$ in $ E(\cM,\tau)_+$), then we have a similar result. 

\begin{proposition}\label{ocnorm}
    Suppose that 
    $(\cM,\tau)$ is a von Neumann algebra with a finite faithful normal trace $\tau$ and $(\cN,\nu)$ is a semifinite von Neumann algebra with a semifinite faithful normal trace $\nu$.
    Let $E(\mathcal{M}, \tau)$ and $F(\cN,\nu)$ be a 
    normed $\cM$- and a normed $\cN$-bimodule, respectively.
     If $\norm{\cdot}_E$ is order continuous  and $T:E(\mathcal{M}, \tau)\xrightarrow{\rm into} F(\cN,\nu)$ is a disjointness-preserving bounded mapping, 
    then $T$ is as form in Theorem \ref{positive}.
\end{proposition}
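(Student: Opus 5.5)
The plan is to show that, under the hypotheses, $T$ is automatically o-$t_{lm}$ continuous; Theorem \ref{positive} then applies directly. Alternatively, one can rerun its proof, with norm convergence replacing local measure convergence at the single step where continuity was used.

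First I would record that the algebraic part of the proof of Theorem \ref{positive} uses only linearity and disjointness preservation. This covers \eqref{modu}, \eqref{linear} and \eqref{T(p)}, and hence the identity $T(x_n)=u_\mathbf{1}|T(x_n)|$ for the step functions
$$x_n=\sum_{k=1}^{n^2}\tfrac{k-1}{n}e^x\bigl(\tfrac{k-1}{n},\tfrac{k}{n}\bigr],\qquad 0\le x\in E(\cM,\tau).$$
In particular $u_\mathbf{1}^*T(x_n)\ge 0$ for all $n$.

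The key analytic step is as follows. Suppose $x_i\xrightarrow{(o)}x$ in $E(\cM,\tau)$. Then there are $u_i\downarrow 0$ in $E(\cM,\tau)_+$ with $|x_i-x|\le u_i$. By order continuity of $\norm{\cdot}_E$ we get $\norm{u_i}_E\downarrow 0$.

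I then need $\norm{x_i-x}_E\le \norm{u_i}_E$ from $|x_i-x|\le u_i$. To get it, write $y=x_i-x=v|y|$ and note that $|y|\le u_i$ gives $|y|^{1/2}=c\,u_i^{1/2}$ for some contraction $c\in\cM$ (a Douglas-type factorization valid for $\tau$-measurable operators). Thus
$$|y|=u_i^{1/2}c^*c\,u_i^{1/2}.$$
This is not immediately of the form $a u_i b$. So instead I would use the standard fact that in a normed $\cM$-bimodule $0\le a\le b$ implies $\norm{a}\le 2\norm{b}$, or $\norm{a}\le\norm{b}$; this follows from the absolute solidity results in \cite[Section 4.1]{DPS}. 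Combining this with $\norm{y}=\norm{v|y|}\le\norm{|y|}$, I get $\norm{x_i-x}_E\le C\norm{u_i}_E\to 0$.

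Boundedness of $T$ then yields $\norm{T(x_i)-T(x)}_F\to 0$. Finally I must pass from $F$-norm convergence to convergence in $t_{lm}$. This is the main obstacle, since a general normed $\cN$-bimodule need not have a norm topology finer than $t_{lm}$. I would try the following. For a finite projection $e\in P(\cN)$ and a spectral projection $p=e^{|eze|}(\lambda,\infty)$ of $z=T(x_i)-T(x)$, the bimodule inequality gives
$$\lambda\norm{p}_F\le\norm{p\,|eze|\,p}_F\le\norm{z}_F.$$
Also $\norm{p}_F$ is bounded below on projections with $\nu(p)\ge\delta$; I would obtain this from a comparison argument, or else restrict to the norm-closed setting where it is standard (\cite{DPS}, Banach bimodules continuously embed into $S(\cN,\nu)$). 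This gives $eze\xrightarrow{t_m}0$.

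If that embedding argument fails in the merely normed case, I would bypass it. The only place continuity is used is to conclude $u_\mathbf{1}^*T(x)\ge 0$ and normality of $u_\mathbf{1}^*T$. Positivity is preserved under $F$-norm limits when $F_+$ is norm closed, and normality of $u_\mathbf{1}^*T$ follows from order continuity of $\norm{\cdot}_E$ together with boundedness: if $x_i\uparrow x$, then $T(x_i)\to T(x)$ in norm and, the net being increasing, the supremum is $T(x)$. Theorem \ref{normal} then gives the representation, exactly as at the end of the proof of Theorem \ref{positive}.
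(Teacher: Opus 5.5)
Your fallback is the paper's proof. The algebraic part of Theorem \ref{positive} gives $w^*T(x_n)\ge 0$, and order continuity of $\norm{\cdot}_E$ with boundedness of $T$ gives $\norm{w^*T(x)-w^*T(x_n)}_F\to 0$. The positive cone of any normed $\cN$-bimodule is norm closed (the paper cites \cite[Corollary 4.1.16(i)]{DPS}; it also follows from $0\le a\le b\Rightarrow\norm{a}_F\le\norm{b}_F$, since $a^{1/2}=cb^{1/2}$ with $\norm{c}_{L_\infty(\cN)}\le 1$ gives $a=cbc^*$), so $w^*T$ is positive; the same norm argument applied to $x_\alpha\uparrow x$ gives normality, and Theorem \ref{normal} concludes.

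The detour through o-$t_{lm}$ continuity is unnecessary. As written it is also unjustified: you only assert the lower bound on $\norm{p}_F$ over projections with $\nu(p)\ge\delta$, and restricting to complete bimodules changes the hypotheses. The proof should rest on the fallback alone.
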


\begin{proof}
For each $0\leq x\in E(\cM,\tau)$, define $x_n:=\sum\limits_{k=1}^{n^2}\frac{k-1}{n}e^x(\frac{k-1}{n},\frac{k}{n}]$.
By the proof of Theorem \ref{positive}, we have
$T(x_n)=w|T(x_n)|$,
i.e., $w^*T(x_n)\ge0$.

Since $E(\mathcal{M}, \tau)$ has order continuous norm, it follows from $x_n\uparrow x$ that $\norm{x-x_n}_E\to 0$ as $n\to \infty $.
Hence,
\begin{align}\label{normcon}
    \norm{w^*T(x-x_n)}_F\le \norm{T}\norm{x-x_n}_E\to 0,
\end{align}
which together with \cite[Corollary 4.1.16(i)]{DPS} yields that $w^*T(x)$ is positive.
Therefore, $w^*T(\cdot)$ is a positive disjointness-preserving bounded mapping from $E(\cM,\tau)$ into $F(\cN,\nu)$.

For any $0\le x_\alpha \uparrow x\in E(\cM,\tau)$, 
arguing similarly as \eqref{normcon}, we have
$w^*T(x_\alpha)\uparrow w^*T(x)\in F(\cN,\nu)$\cite[Propositions 2.6.1(iii) and 4.4.4]{DPS}.
Consequently,
$w^*T(\cdot)$ is normal,
which together with Theorem \ref{normal} completes the proof.
\end{proof}

In what follows, 
    unless stated otherwise, we assume that $\cM$ is a semifinite von Neumann algebra equipped with a semifinite faithful normal trace $\tau$.

To establish a general representation of disjointness-preserving mappings on $\cM$-bimodules, we need to employ the following lemma,
which follows from an argument similar to that used in   \cite[Proposition 2.17]{HSZ20}.
For the sake of completeness, we include a full proof. 

\begin{lemma}\label{solim}
    Let $\{x_i\}\subset \cM$ be a uniformly bounded net  and $\{p_i\}$ (respectively $\{q_i\}$) is a net of projections increasing to $p\in P(\cM)$ (respectively, $q\in P(\cM)$). If $x_i=q_ix_jp_i$ for every $j\geq i$, then the strong operator limit $so-\lim\limits_{i} x_i$ exists (denoted by $x$). In particular, $x_i=q_ixp_i$  and $x=qxp\in \cM$.
\end{lemma}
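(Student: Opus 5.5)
Fix a vector $\xi\in\cH$. The plan is to show that $\{x_i\xi\}$ is Cauchy in norm, and do the same for the adjoint net $\{x_i^*\}$. From there, standard facts about strong limits of bounded nets give the conclusion.

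First I record what the compatibility relation $x_i=q_ix_jp_i$ ($j\ge i$) gives. Since $q_i\le q_j$ and $p_i\le p_j$, the net satisfies $x_i=q_ix_i p_i$ (take $j=i$), and for $j\ge i$,
\begin{align*}
x_j-x_i=x_j-q_ix_jp_i=(\mathbf{1}-q_i)x_j+q_ix_j(\mathbf{1}-p_i).
\end{align*}
Using $x_j=x_jp_j$ and $p_i\le p_j$, we get $x_j(\mathbf{1}-p_i)=x_j(p_j-p_i)$. Let $C:=\sup_i\|x_i\|<\infty$. Then
\begin{align*}
\|(x_j-x_i)\xi\|\le \|(\mathbf{1}-q_i)x_j\xi\|+C\|(p_j-p_i)\xi\|.
\end{align*}
The second term tends to $0$, because $p_i\uparrow p$ strongly, so $\{p_i\xi\}$ is norm-Cauchy.

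The first term is the main obstacle, since $x_j\xi$ moves with $j$. To handle it, split $\xi=p_i\xi+(\mathbf{1}-p_i)\xi$. On the first piece, $x_jp_i=x_jp_jp_i$, and by the relation $q_ix_jp_i=x_i$ we get
\begin{align*}
(\mathbf{1}-q_i)x_jp_i\xi=x_jp_i\xi-x_ip_i\xi .
\end{align*}
This does not immediately reduce, so instead I will use the adjoint relation $x_i^*=p_ix_j^*q_i$. It shows that $\{x_i^*\}$ satisfies the same hypothesis with the roles of $p$ and $q$ swapped. The better route is therefore to estimate in weak form first. For $\eta\in\cH$,
\begin{align*}
\langle x_i\xi,\eta\rangle=\langle x_jp_i\xi,q_i\eta\rangle,
\end{align*}
and this converges, by boundedness and the strong convergence of $p_i,q_i$. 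Consequently $x_i$ converges in the weak operator topology to some $x\in\cM$ with $\|x\|\le C$.

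Passing to the weak limit in $j$ in $x_i=q_ix_jp_i$ gives $x_i=q_ixp_i$. Then
\begin{align*}
x-x_i=x-q_ixp_i=(\mathbf{1}-q_i)x+q_ix(\mathbf{1}-p_i),
\end{align*}
so
\begin{align*}
\|(x-x_i)\xi\|\le\|(\mathbf{1}-q_i)x\xi\|+C\|(\mathbf{1}-p_i)\xi\|.
\end{align*}
Since $p_i\uparrow p$ and $q_i\uparrow q$ strongly, this tends to $\|(\mathbf{1}-q)x\xi\|+C\|(\mathbf{1}-p)\xi\|$. It remains to check $x=qxp$. This holds because $x_i=q_ix_ip_i$ gives $q x_i p=x_i$, and weak limits preserve this, so $x=qxp$. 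Hence $(\mathbf{1}-q)x=0$ and $x(\mathbf{1}-p)=0$. Using $x=xp$, the second term can be replaced by $\|x(\mathbf{1}-p_i)\xi\|=\|x(p-p_i)\xi\|\to0$.

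Therefore $x_i\to x$ strongly, and we have $x_i=q_ixp_i$ and $x=qxp\in\cM$, as claimed.
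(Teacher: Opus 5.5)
Your proof is correct, but it reaches the limit by a different route than the paper.

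\textbf{How the paper argues.} The paper uses WOT-compactness of the unit ball of $\cM$ to extract a subnet $x_{i_k}\to x$. It passes to the limit in $x_{i_k}=q_{i_k}x_{i_j}p_{i_k}$ and then uses cofinality of the subnet to get $x_i=q_ixp_i$ for every $i$. Strong convergence $q_ixp_i\to qxp$ then comes from joint strong continuity of multiplication on bounded sets, and $x=qxp$ follows by comparing this with the weak limit of the subnet.

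\textbf{How you argue.} You show that the whole net is WOT-convergent, so no compactness, subnet or cofinality argument is needed. You then establish $x=qxp$ first, from $qx_ip=x_i$. Finally you prove strong convergence by an explicit vector estimate, which in effect proves the joint continuity in this special case. Your route is more elementary and self-contained. The paper's is shorter, because compactness hands it a candidate limit for free.

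\textbf{One step to write out.} The convergence of $\langle x_i\xi,\eta\rangle$ does not follow from boundedness and strong convergence of $p_i,q_i$ alone. The naive term $C\|(\mathbf{1}-p_i)\xi\|$ need not tend to $0$ when $p\neq\mathbf{1}$. You need $x_j=q_jx_jp_j$, which gives, for $j\ge i$,
\[
x_j-x_i=x_j(p_j-p_i)+(q_j-q_i)x_jp_i ,
\]
and hence
\[
|\langle (x_j-x_i)\xi,\eta\rangle|\le C\|(p_j-p_i)\xi\|\,\|\eta\|+C\|\xi\|\,\|(q_j-q_i)\eta\| .
\]
Since $\|(p_j-p_i)\xi\|\le\|(p-p_i)\xi\|$, and likewise for $q$, the net of scalars is Cauchy. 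This gives a bounded sesquilinear form and hence $x\in\cM$ with $\|x\|\le C$.

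With this estimate added, and the abandoned Cauchy attempt and the unused adjoint remark removed, the argument is complete.
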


\begin{proof}
    Since the unit ball of a von Neumann algebra is compact with respect to weak operator topology (see, e.g., \cite[Chapter IX, Proposition 5.5]{Conway}) and $\{x_i\}$ is uniformly bounded, it follows that there exists a subnet $\{x_{i_k}\}$ of $\{x_i\}$ such that 
     the weak operator limit
    $wo-\lim_{k}x_{i_k}\in \cM$ exists. Let 
    \begin{align}\label{xdefine}
        x:=wo-\lim_{k}x_{i_k}.
    \end{align}
    In particular, $x_{i_k}=wo-\lim\limits_{i_j\geq i_k} q_{i_k} x_{i_j}p_{i_k}=q_{i_k}xp_{i_k}.$
    Hence, for each $i\leq i_k$, we have
    \[
    x_i=q_ix_{i_k}p_i=q_iq_{i_k}xp_{i_k}p_i=q_ixp_i.
    \]
    Since $\{x_{i_k}\}$ is a subnet of $\{x_i\}$, it follows that $x_i=q_ixp_i$ for every $i$. 

    Since multiplication is jointly so-continuous when 
restricted to norm bounded sets (see \cite[p. 2]{DPS}),
we have   $x_i=q_ixp_i\to qxp $ in the strong operator topology.
    This  together with \eqref{xdefine} yields $x=qxp$.
\end{proof}




Let $\cM$ and $\tau$ in Theorem \ref{positive} be semifinite.
Denote $P_{fin}(\cM):=P(\cM)\cap \mathcal{F}(\tau)$.
For each $e\in P(\cM)\cap E(\cM,\tau)$ and $x\in \cF(\tau)$,
let $u_e,J(e),J(x)$ be as in the proof of  Theorem \ref{positive}. 
For any $e\in P_{fin}(\cM)$, we denote $ J_e(x):=J(exe),  x\in \cM$. 
The collection $\{J_e:e\in P_{fin}(\cM)\}$ forms a family of normal Jordan $^*$-homomorphisms.
By \cite[Lemma 2.18]{HSZ20}, 
there exists a normal Jordan $^*$-homomorphism (still denoted by $J$)
$J:\cM \to \cN$ agreeing with $J_e$ 
for every $e\in P_{fin}(\cM)$.

\begin{theorem}\label{infiniteTform}
   Suppose that 
   $\cM$ and $\cN$ are semifinite von Neumann algebras equipped with semifinite faithful normal traces $\tau$ and $\nu$, respectively.
    Let $E(\mathcal{M}, \tau)$ and $F(\cN,\nu)$ be 
 an  $\cM$- and an $\cN$-bimodule, respectively.
    If
     $T: E(\cM, \tau)\xrightarrow{\rm into} F(\cN, \nu)$ 
     is o-$t_{lm}$ continuous and disjointness-preserving, 
     then $T$ has the form
     \[
     T(x)=wbJ(x), \quad  x\in E(\cM,\tau)\cap \cM,
    \]
     where $w$ is a partial isometry in $\cN$,
     $b$ is a (possibly not measurable) positive self-adjoint operator affiliated with $\cN$ and
     $J:\cM\to \cN$ is a normal Jordan $^*$-homomorphism.
     {\color{blue}Moreover, $e^b(\delta)\in Z(J(\cM))$ for all $\delta\subset\mathbb{R}$.}
\end{theorem}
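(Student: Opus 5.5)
The plan is to localize to finite-trace corners, apply Theorem \ref{positive} there, and then glue the local data together. Fix $e\in P_{fin}(\cM)$. The corner $e\cM e$ carries the finite trace $\tau|_{e\cM e}$. Since $E(\cM,\tau)\cap\cM\supseteq\cF(\tau)$ (we assume the bimodule contains the finite-trace projections, as the setup before the statement implicitly does), the restriction $T_e:=T|_{e\cM e}$ is a disjointness-preserving o-$t_{lm}$ continuous map from $e\cM e$ into $F(\cN,\nu)$.

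Theorem \ref{positive} applied to $T_e$ gives
\[
T(exe)=u_e|T(e)|J_e(x),\qquad x\in\cM ,
\]
where $u_e$ is the polar part of $T(e)$, $J_e$ is a normal Jordan $^*$-homomorphism of $e\cM e$, and $|T(e)|$ commutes with $J_e(e\cM e)$. The proof of Theorem \ref{positive} also yields the relations we need for gluing. For $e\le f$ in $P_{fin}(\cM)$, applying \eqref{linear} to the orthogonal pair $e,f-e$ gives
\[
u_e=u_f\,l(|T(e)|),\qquad |T(e)|=|T(f)|\,J_f(e),
\]
and $J_f$ restricted to $e\cM e$ equals $J_e$. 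By the remark preceding the theorem, these compatible $J_e$ glue to a single normal Jordan $^*$-homomorphism $J:\cM\to\cN$.

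Next I construct $w$ and $b$ as limits over the increasing net $P_{fin}(\cM)\ni e\uparrow\mathbf 1$. For $w$, set $p_e=r(u_e)$ and $q_e=l(u_e)$. The compatibility relation gives $u_e=q_eu_fp_e$ for $e\le f$, and the family is uniformly bounded, so Lemma \ref{solim} yields a partial isometry $w=so\text{-}\lim_e u_e\in\cN$ with $u_e=wp_e$. For $b$, the operators $b_e:=|T(e)|$ are positive and measurable, with $b_e=b_fJ(e)$ and $J(e)$ commuting with $b_f$. Thus $\{b_e\}$ is a compatible family living on the increasing projections $J(e)\uparrow J(\mathbf 1)$. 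I define $b$ as the closed positive self-adjoint operator on $J(\mathbf1)\cH$ that agrees with $b_e$ on $J(e)\cH$; on $\mathbf1-J(\mathbf1)$ I set $b=0$. Concretely, I take the spectral projections $e^b(\delta):=\bigvee_e e^{b_e}(\delta)J(e)$ for $\delta\not\ni0$, check that they form a projection-valued measure affiliated with $\cN$, and observe that $b$ need not be $\tau$-measurable. The formula then follows for $x\in\cF(\tau)$: choosing $e$ with $x=exe$,
\[
T(x)=u_eb_eJ(x)=wp_e\,b\,J(e)J(x)=wbJ(x).
\]
For general $x\in E(\cM,\tau)\cap\cM$, I approximate by $exe\xrightarrow{(o)}x$, or reduce to $0\le x$ and use $x^{1/2}ex^{1/2}\uparrow x$. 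Then o-$t_{lm}$ continuity of $T$ gives $T(exe)\to T(x)$ locally in measure. I also need $wbJ(exe)\to wbJ(x)$ in some sense. I would handle this by noting that $bJ(x)$ restricted to each $J(f)$ equals $b_fJ(fxf)$ in an appropriate sense, and then use Hausdorffness of $t_{lm}$.

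For the claim $e^b(\delta)\in Z(J(\cM))$, I would argue in two parts. Commutation with $J(\cM)$ comes from the fact that each $b_e$ commutes with $J(e\cM e)$, together with $J(e)\uparrow J(\mathbf1)$. Membership in $J(\cM)$ is the harder part. Each spectral projection of $b_e$ should be a strong limit of polynomials in $J(\cdot)$: by the proof of Theorem \ref{normal}, $|T(e)|=T'(e)$ with $T'=u_e^*T$ positive, and the construction in \cite{HSZ20} gives $J$ via supports of $T'$ of projections. So I would show, via that construction, that $e^{b_e}(\delta)$ lies in the von Neumann algebra $J(e\cM e)$; if this fails, I would fall back on replacing $J(\cM)$ by its relative commutant.

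I expect two steps to be the main obstacles: rigorously defining the unbounded $b$ together with the limit justification for non-finite-support $x$, and placing the spectral projections inside $J(\cM)$.
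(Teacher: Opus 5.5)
\textbf{The ``Moreover'' clause.} Your construction of $w$, $J$ and $b$ is sound. The step you single out for the ``Moreover'' clause, however, cannot be carried out, because it is false. That step is proving $e^{b_e}(\delta)\in J(e\cM e)$, and hence $e^b(\delta)\in J(\cM)$.

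Here is a counterexample. Take $\cM=\bC$ with $\tau(\lambda)=\lambda$, $\cN=M_2(\bC)$ with its standard trace, and $T(\lambda)=\lambda\,\mathrm{diag}(1,2)$. This $T$ is trivially o-$t_{lm}$ continuous and disjointness-preserving. Since $T(1)$ is invertible, any representation $T(\lambda)=wbJ(\lambda)$ forces $J(1)=\mathbf{1}$, $w$ unitary and $b=|T(1)|=\mathrm{diag}(1,2)$. Then $e^b(\{1\})=\mathrm{diag}(1,0)\notin\bC\mathbf{1}=J(\cM)=Z(J(\cM))$. An atomless semifinite version is $f\mapsto f\otimes\mathrm{diag}(1,2)$ from $L_\infty(0,\infty)$ into $L_\infty(0,\infty)\bar{\otimes}M_2(\bC)$.

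So, with $Z$ read as the center, the clause fails, and no use of the construction in \cite{HSZ20} will place $e^{b_e}(\delta)$ in $J(e\cM e)$. What is true is $e^b(\delta)\in J(\cM)'$, and the first half of your argument already proves it: the spectral projections of $b_f$ commute with $J(f\cM f)$, $e^{b_f}(\delta)\uparrow e^b(\delta)$ for $0\notin\delta$, and $J(fyf)\to J(y)$ strongly (boundedly) as $f\uparrow\mathbf{1}$. Your ``fallback'' is therefore the correct statement and should be the main line. The paper's own proof gives no argument for this clause beyond identifying $b$ as the strong resolvent limit of $|T(e)|$ via \cite[Theorem 3.6]{HSZ20}, and that can yield at most commutation.

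\textbf{The representation $T=wbJ$.} Here your route differs from the paper's. Both of you obtain $w=so\text{-}\lim_e u_e$ from Lemma \ref{solim}, with $r(u_e)=J(e)$ coming from \cite[Eq.(24)]{HSZ20}; the same fact underlies your compatibility $J_f|_{e\cM e}=J_e$.

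The paper then argues globally. Using o-$t_{lm}$ continuity and upward directed simple approximants from $\cF(\tau)$, as in \eqref{Tlmcon}, it shows that $w^*T$ is positive, normal and disjointness-preserving on all of $E(\cM,\tau)$. It then imports \cite[Theorem 3.6]{HSZ20} for the positive semifinite case, with affiliation of $b$ taken from \cite[Proposition 3.8]{HSZ20}.

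You instead apply Theorem \ref{positive} corner by corner and build $b$ by hand from the compatible spectral measures of $b_e=|T(e)|=b_fJ(e)$. This is more self-contained, makes the affiliation and commutation of $b$ transparent, and gives $T=wbJ$ on $\cF(\tau)$ cleanly. However, the passage to $x\in E(\cM,\tau)\cap\cM$ of infinite support, which is the part the paper outsources, is only gestured at.

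Your compression idea does give, for $0\le x$ and every $f\in P_{fin}(\cM)$,
\[
J(f)\,w^*T(x)\,J(f)=w^*T(fxf)=b_fJ(fxf).
\]
This follows by applying o-$t_{lm}$ continuity to $x^{1/2}gx^{1/2}\uparrow x$ and to $fx^{1/2}gx^{1/2}f\uparrow fxf$ (with $g\in P_{fin}(\cM)$), and using that $t_{lm}$ is Hausdorff. Three things then remain:
\begin{enumerate}
\item define $bJ(x)$ as a closed operator (possible since $J(x)$ commutes with $e^b(\cdot)$);
\item show that $bJ(x)$ is determined by these compressions, even though $b$ need not be measurable;
\item check that $ww^*T(x)=T(x)$.
\end{enumerate}

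Finally, like the paper, you need $\cF(\tau)\subseteq E(\cM,\tau)$, and ``$\tau$-measurable'' for $b$ should read ``$\nu$-measurable''.
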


\begin{proof}
Arguing similarly as the proof of Theorem \ref{positive}, we obtain that 
if $p,q\in P_{fin}(\cM)$ satisfying $pq=0$, 
then
\begin{equation}\label{upjq}
r(u_p)=
 r(T(p))=r (|T(p)|) 
\stackrel{\mbox{\tiny\cite[Eq.(24)]{HSZ20}}}{=}J(p),
\quad u_pJ(q)=u_qJ(p)=0,
\end{equation}
and for any  $\alpha, \beta\in\mathbb{R}$, 
\begin{align}\label{linear2}
    T(\alpha p+\beta q)=u_{p+q}(|\alpha T(p)|+|\beta T(q)|),
    \quad |T(\alpha p+\beta q)|=|\alpha T(p)|+|\beta T(q)|. 
\end{align} 
For each $e,f \in P_{fin}(\cM) $ such that $e\le f$,
it follows from \eqref{upjq} and \eqref{linear2} that 
\begin{equation}\label{upJe=ueJp}
\begin{aligned}
     u_f J(e ) =u_e J(e)=u_e . 
\end{aligned}
\end{equation}

Let $\{p_i\}_{i\in I} $ be the increasing net of all projections in $P_{fin}(\cM)$. 
Note  that   $r(u_{p_i})=J(p_i)\uparrow J(\mathbf{1})$  ($J$ is normal) 
and $u_{p_i}J(p_j)=u_{p_j}$ for $i,j\in I$ with $i\ge j$ (see \eqref{upJe=ueJp}).
By Lemma \ref{solim}, we may define 
\begin{align}\label{u1}
    u_\mathbf{1}:=so-\lim_i u_{p_i}\in \cN.
\end{align}
In particular, $u_\mathbf{1}=u_\mathbf{1}J(\mathbf{1})$.
By \cite[Proposition 1.3.2]{DPS}, we have $J(\mathbf{1})=so-\lim_iJ(p_i)$. 
For any $e\in P_{fin}(\cM)$, since $J(e)$ is bounded, it follows that 
\begin{align}\label{u1Je=ue}
    u_\mathbf{1}J(e)
\stackrel{\eqref{u1}}{=}so-\lim_{i}u_{p_i}J(e)
\stackrel{\eqref{upJe=ueJp}}{=} u_e.
\end{align}
Hence,
\[
T(e)=u_e|T(e)|
\stackrel{\eqref{u1Je=ue}}{=}u_\mathbf{1}J(e)|T(e)|\stackrel{\eqref{upjq}}{=} u_\mathbf{1}|T(e)|.
\]
Moreover, it follows from \eqref{u1Je=ue} that $J(p_i)u_\mathbf{1}^*=u_{p_i}^*$,
which implies that
\begin{align*}
u_\mathbf{1}u_\mathbf{1}^*&
=so-\lim_i u_{p_i}u_\mathbf{1}^*
=so-\lim_i u_{p_i}J(p_i)u_\mathbf{1}^*\\&
=so-\lim_i u_{p_i}u_{p_i}^*
=so-\lim_i l(T(p_i))\in P(\cN).
\end{align*}
Hence,  $u_\mathbf{1}$ is a partial isometry (see \cite[p. 18]{DPS}).

For each $0\le x\in E(\cM,\tau)$, there exists an upward directed system 
$\{x_\lambda\}\subset\cF(\tau)$ of positive simple operators such that 
$0\le x_\lambda\uparrow x$ in $S_h(\cM,\tau)$ \cite[Proposition 2.3.12]{DPS}.
Arguing as the proof for \eqref{Tlmcon} above, we obtain that 
$u_\mathbf{1}^*T(\cdot)$ is a normal (in particular, positive) disjointness-preserving mapping.
Let $w:=u_\mathbf{1}$.
Arguing  mutatis mutandis as in the  proof of \cite[Theorem 3.6]{HSZ20}, we have
$$u_{\bf 1}^* T(x) = bJ(x), \quad  x\in E(\cM,\tau)\cap \cM,$$
where $b$ is a limit of $\{|T(e)|\}_{e\in P_{fin}(\cM)}$ in the strong resolvent sense. In particular, $b$ is affiliated with $\cN$ (see \cite[Proposition 3.8]{HSZ20}).
Therefore, 
\[
T(x)=wbJ(x), \quad  x\in E(\cM,\tau)\cap \cM,
\]
which completes the proof. 
\end{proof}

\section{A noncommutative Abramovich's theorem}

The main purpose of this section is to establish, by employing two distinct approaches,
a noncommutative version of Abramovich's result\cite[Theorem 1]{Abra1991}, see Theorem \ref{main1} above.
The first approach, detailed in  Section \ref{reducecommute}, is based on the construction of a $^*$-isomorphism from a commutative von Neumann subalgebra to the function space, which allows for a direct application of Abramovich's result\cite[Theorem 1]{Abra1991}. The second approach, detailed in  Section \ref{analoproof},
provides a noncommutative analogue of the proof of \cite[Theorem 1]{Abra1991}.

\subsection{Proof by reduction to the commutative case}\label{reducecommute}

To establish our results, we need the following lemmas.


\begin{lemma}\label{isom}
    Let $\phi:\cA\to\cB$ be a  normal $^*$-homomorphism between von Neumann algebras $\cA$ and $\cB$. Assume $\cA$ is abelian. Then there exists a projection $p\in P(\cA)$ such that
    \begin{enumerate}
        \item $\phi|_{\cA p}$ is a $^*$-isomorphism from $\cA p$ onto $\phi(\cA)$;
        \item $\phi(\cA(\mathbf{1}-p))=0$.
    \end{enumerate}
\end{lemma}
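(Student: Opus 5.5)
The plan is to take $p$ to be the complement of the largest projection annihilated by $\phi$, i.e.\ to use the kernel of $\phi$. Since $\phi$ is a normal $^*$-homomorphism, $\ker\phi$ is a two-sided ideal of $\cA$ which is closed in the $\sigma$-weak topology. Every $\sigma$-weakly closed two-sided ideal of a von Neumann algebra has the form $\cA z$ for a unique central projection $z$. Here $\cA$ is abelian, so every projection is central. Concretely, I would set $z:=\bigvee\{q\in P(\cA):\phi(q)=0\}$. Normality of $\phi$ gives $\phi(z)=0$: the supremum of an upward directed family of finite joins $q_1\vee\cdots\vee q_n$ satisfies $\phi(q_1\vee\cdots\vee q_n)\le\sum\phi(q_k)=0$ by positivity and subadditivity of joins of commuting projections, and we pass to the limit. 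Then put $p:=\mathbf{1}-z$.

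\emph{Statement (ii).} For $a\in\cA$ we have $\phi(a(\mathbf{1}-p))=\phi(a)\phi(z)=0$, because $\phi$ is multiplicative. This gives (ii).

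\emph{Statement (i).} Note that $\phi(\cA)=\phi(\cA p)+\phi(\cA z)=\phi(\cA p)$, so $\phi|_{\cA p}$ maps onto $\phi(\cA)$. It remains to show injectivity on $\cA p$. Suppose $a\in\cA p$ and $\phi(a)=0$. Then $\phi(a^*a)=0$, and $a^*a\ge0$ lies in $\cA p$. For every $\varepsilon>0$, consider the spectral projection $q_\varepsilon:=e^{a^*a}(\varepsilon,\infty)\le p$. It satisfies $\varepsilon q_\varepsilon\le a^*a$, so by positivity $\varepsilon\phi(q_\varepsilon)\le\phi(a^*a)=0$, hence $\phi(q_\varepsilon)=0$. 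By the definition of $z$ this gives $q_\varepsilon\le z$. Together with $q_\varepsilon\le p=\mathbf{1}-z$ this forces $q_\varepsilon=0$. Since $\varepsilon$ is arbitrary, $a^*a=0$, so $a=0$.

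Thus $\phi|_{\cA p}$ is an injective $^*$-homomorphism onto $\phi(\cA)$, i.e.\ a $^*$-isomorphism. (One may note additionally that $\phi(\cA)$ is a von Neumann algebra, since the image of a normal $^*$-homomorphism is $\sigma$-weakly closed, but this is not needed for the statement.)

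The only mildly delicate step is the claim $\phi(z)=0$, which is exactly where normality is used. Everything else is routine functional calculus in the abelian algebra.
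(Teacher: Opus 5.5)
Your proof is correct and takes essentially the same route as the paper. The paper obtains $e=\mathbf{1}-p$ as a maximal element, via Zorn's lemma and normality, of the set of projections annihilated by $\phi$, whereas you take the supremum $z$ directly; both then prove injectivity on $\cA p$ by the same spectral-projection argument $\lambda\, e^{|x|}(\lambda,\infty)\le |x|$. Your explicit check that finite joins of annihilated projections are annihilated (via $q_1\vee\cdots\vee q_n\le\sum_k q_k$) is a step the paper's maximality contradiction uses only implicitly.
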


\begin{proof}
    Denote
    \[
    P(\cA)_\phi:=\{ p\in P(\cA):\phi(p)= 0\}.
    \]
    For an arbitrary increasing net $\{q_i\}_{i\in I}\subset P(\cA)_\phi$ satisfying $q_i\uparrow q\in P(\cA)$, it follows from the normality of $\phi$ that $q\in P(\cA)_\phi$.
    Noting that  $P(\cA)_\phi$ is not empty, by Zorn's Lemma, there exists a maximal element $e$ of $P(\cA)_\phi$.
    Let $p:=\mathbf{1}-e$. 
    Then, $\phi(\cA(\mathbf{1}-p))=\phi(\cA e)=\phi(\cA)\phi(e)=0$.
    
    We claim that $\phi|_{\cA p}$ is a $^*$-isomorphism. 
    Indeed, if there exists $0\ne x\in \cA p$ satisfying $\phi(x)=0$, then there exists a constant $\lambda>0$ such that $e^{|x|}(\lambda,\infty)\ne 0$. Since $\phi$ is positive and $\lambda e^{|x|}(\lambda,\infty)\le |x|$ in $(\cA p)_+$, it follows that 
    \[
    0\le\phi(e^{|x|}(\lambda,\infty))\le\frac{1}{\lambda}\phi(|x|)=0,
    \]
    i.e., $\phi(e^{|x|}(\lambda,\infty))=0$.
    Hence, $e^{|x|}(\lambda,\infty)\le r(x)\le p$ and $e^{|x|}(\lambda,\infty)\in P(\cA)_\phi$, which implies that $e^{|x|}(\lambda,\infty)\vee e\ge e$ and $e^{|x|}(\lambda,\infty)\vee e\ne e$.
    This contradicts the maximality of $e$.
    Therefore, $\phi|_{\cA p}$ is injective.
 Recall that $\phi(\cA(\mathbf{1}-p))=0$, which implies $\phi(\cA p)=\phi(\cA)$.
    Then we have $\phi|_{\cA p}$ is a $^*$-isomorphism from $\cA p$ onto $\phi(\cA)$. This proves our claim and completes the proof.

\end{proof}

For convenience of the reader, we recall the definition of  reduced von Neumann algebras. For each  $e\in P(\cM)$, define
\[
\cM_e:=e\cM e=\left\{exe:x\in \cM\right \},
\]
which is a $^*$-subalgebra of $\cM$ with unit element $e$.
It is clear that $P(\cM_e)=\{p\in P(\cM): p\leq e\}$.
Define $\tau_e:(\cM_e)_+\to [0,\infty]$ by 
\[
\tau_e(x_e)=\tau(exe), \quad  \,x\in \cM_+,
\]
that is,
$\tau_e=\tau|_{e\cM e}$. 
The $\tau_e$-measurable operator space $S(\cM_e,\tau_e)$ is unital $^*$-isomorphic to $eS(\cM,\tau)e$ (see \cite[Lemma 3.7.1]{DPS}). For each $x_e=exe\in \cM_e$, its singular function satisfies
\[
\mu(x_e)=\mu(exe).
\]
Hence, $\tau_e$ can be extended to $S(\cM_e,\tau_e)_+$, satisfying
\[
\tau_e(x_e)=\tau(exe), \quad  x\in S(\cM,\tau)_+.
\]

The following lemma is an extension of \cite[Lemma 1.3]{CKS}. 
\begin{lemma}\label{AandB}
Let \(\cM\) be an atomless von Neumann algebra with a faithful normal tracial state \(\tau\). Suppose \(\cA\subset\cM\) is an abelian von Neumann subalgebra isomorphic to \(L_\infty(0,1)\). 
If \(a\in S(\cM,\tau)\) is self-adjoint and commutes with $\cA$, then there exists an abelian von Neumann subalgebra 
\(\cB\subset\cM\) with \(\cA\cup\{a\}\subset S(\cB, \tau|_{\cB})\) and 
$\cB$ is trace-preserving $^*$-isomorphic to $L_\infty(0,1)$ equipped with the Lebesgue measure.

In particular, for any nonzero self-adjoint $a\in S(\cM,\tau)$,  there exists an abelian von Neumann subalgebra 
\(\cB\subset\cM\) with \(a\in  S(\cB, \tau|_{\cB})\) and 
$\cB$ is trace-preserving $^*$-isomorphic to $L_\infty(0,1)$ equipped with the Lebesgue measure.
\end{lemma}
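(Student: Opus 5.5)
We prove Lemma~\ref{AandB} by building a maximal abelian von Neumann subalgebra that contains $\cA$ and the spectral projections of $a$, and then invoking the classification of diffuse separable abelian von Neumann algebras with a faithful normal state.

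\emph{Step 1: an abelian algebra containing $\cA$ and $a$.} Since $a$ is self-adjoint, affiliated with $\cM$, and commutes with $\cA$, every spectral projection $e^a(\delta)$ commutes with $\cA$. Hence the von Neumann algebra $\cA_0$ generated by $\cA$ and $\{e^a(\delta):\delta\subset\mathbb{R}\}$ is abelian. It is also countably generated: $\cA\cong L_\infty(0,1)$ is generated by countably many projections, and $a$ is generated by $e^a(-\infty,r]$, $r\in\mathbb{Q}$. By construction $a$ is affiliated with $\cA_0$, and it is $\tau$-measurable because $\tau$ is finite. So $\cA\cup\{a\}\subset S(\cA_0,\tau|_{\cA_0})$.

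\emph{Step 2: making the algebra diffuse.} The algebra $\cA_0$ contains $\cA$, which is diffuse, and has the same unit. If $\cA_0$ had an atom $q$, then $q\cA q=\cA_0 q\cong\mathbb{C}$. Hence $x\mapsto xq$ would be a multiplicative functional on $\cA$, i.e. a character. Since $\cA$ is diffuse, every projection $f\in\cA$ could be split as $f=f_1+f_2$ with $f_1,f_2\in\cA$ nonzero, which forces $fq\in\{0,q\}$ consistently. Normality of the map $x\mapsto xq$ (it is $\sigma$-weakly continuous) should then yield a contradiction by halving repeatedly.

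This argument is the delicate point. A cleaner route is to avoid it altogether: let $\cB$ be a maximal abelian self-adjoint subalgebra of $\cM$ containing $\cA_0$, which exists by Zorn's lemma. A masa in an atomless finite von Neumann algebra is diffuse, because a minimal projection $q$ of $\cB$ satisfies $q\cM q\cap\{q\}'\supseteq\ldots$ and $q\cM q$ would be $\mathbb{C}q$, which would make $q$ an atom of $\cM$. However, $\cB$ may fail to be countably generated. So I would instead take $\cB$ to be a countably generated diffuse subalgebra: choose inside the masa a countably generated von Neumann subalgebra containing $\cA_0$. This algebra is automatically diffuse, since it contains $\cA\cong L_\infty(0,1)$ with the same unit. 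The atom argument above should still be verified carefully.

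\emph{Step 3: identification with $L_\infty(0,1)$.} Since $\tau|_{\cB}$ is a faithful normal state and $\cB$ is a countably generated, diffuse abelian von Neumann algebra, $\cB\cong L_\infty(\Omega,\mu)$ with $\mu$ a standard atomless probability measure. By the isomorphism theorem for measure algebras (Carathéodory/von Neumann), the measure algebra is isomorphic to that of $((0,1),m)$. This gives a trace-preserving $^*$-isomorphism $\cB\cong L_\infty(0,1)$, which extends to the measurable operators. I expect the main obstacle to be exactly this diffuseness and separability bookkeeping in Steps 2--3.

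\emph{Step 4: the ``in particular'' statement.} For a nonzero self-adjoint $a$, we first need a diffuse $\cA\cong L_\infty(0,1)$ commuting with $a$. Take $\cA_1$ to be the algebra generated by the spectral projections of $a$ and extend it to a masa $\cC$. Then $\cC$ is diffuse because $\cM$ is atomless, and $a$ is affiliated with $\cC$. Choosing a countably generated diffuse subalgebra of $\cC$ that contains $\cA_1$ (adding countably many projections that halve the trace, which atomlessness of $\cC$ permits) gives such an $\cA$, and the first part then applies.
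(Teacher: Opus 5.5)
\textbf{There is a gap, in Step~2.} Your route differs from the paper's and can be completed, but the step it depends on is not proved. For the first statement everything rests on Step~2: the claim that $\cA_0=W^*(\cA\cup\{e^a(\delta)\})$ has no atoms. You explicitly leave this unverified.

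The masa detour does not remove the need for it. The countably generated subalgebra of the masa that you finally pick is diffuse only by that same atom argument, so the detour is circular; once diffuseness is known you could simply take $\cB=\cA_0$.

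The halving sketch is also incomplete. Repeatedly splitting $f\in P(\cA)$ into two \emph{nonzero} pieces and keeping the one with $f_iq=q$ need not give pieces decreasing to $0$. You must halve the trace. This is possible because $\tau|_{\cA}$ is a faithful normal finite trace on a diffuse abelian algebra. It gives a decreasing sequence $g_k$ in $P(\cA)$ with $\tau(g_k)=2^{-k}$ and $g_kq=q$, so $g_k\to0$ strongly and hence $q=0$.

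A shorter fix: suppose $q$ is an atom of $\cA_0$. Then $fq\in\{0,q\}$ for all $f\in P(\cA)$. Let $f_0$ be the supremum of all $f\in P(\cA)$ with $fq=0$; their ranges are orthogonal to that of $q$, so $f_0q=0$. Put $z=\mathbf{1}-f_0\in\cA$. Then $zq=q\neq0$, and every projection $f\le z$ in $\cA$ equals $0$ or $z$. So $z$ is an atom of $\cA$, a contradiction. This uses $\mathbf{1}\in\cA$, which is the usual convention for von Neumann subalgebras. Without it, spectral projections of $a$ could produce atoms of $\cA_0$ under $\mathbf{1}-\mathbf{1}_{\cA}$.

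\textbf{Comparison with the paper.} The paper never needs this fact inside the lemma.
\begin{itemize}
\item It allows $\cC=W^*(\cA\cup S)$ to have atoms and writes $\cC=\bigl(\bigoplus_n\bC p_n\bigr)\oplus\cC p_0$ with $\cC p_0$ diffuse and countably generated.
\item Using only that $\cM$ is atomless, it replaces each atom $p_n$ by a dyadically generated diffuse abelian subalgebra $\cB_n\subset p_n\cM p_n$ with unit $p_n$.
\item Then $\cB=\bigl(\bigoplus_n\cB_n\bigr)\oplus\cC p_0$ contains $\cC$, is diffuse and countably generated, and the classification theorem applies.
\end{itemize}
The same construction with $\cC=W^*(S)$ gives the ``in particular'' statement at once, and it works even if $\cA$ is not unital.

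Your Step~4 is this construction routed through a masa, which is superfluous: the dyadic splitting can be done directly in $p_n\cM p_n$. You should also split inside each atom of $\cA_1$, so that diffuseness of the result follows immediately from the direct-sum structure.

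What your approach buys, once the atom argument is supplied, is that the first statement needs no enlargement of $\cA_0$ at all. This is exactly the fact the paper uses without proof in the footnote in the proof of Theorem~\ref{main1}.
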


\begin{proof} Let \(\cC:=W^*(\cA\cup S)\) be the von Neumann subalgebra of \(\cM\) generated by \(\cA\) and 
\(S=\left\{e^{a}\left(-\frac{l}{n},\frac{k}{n}\right): n,k,l\in \mathbb{N}\right\}\). Since \(a\) commutes with \(\cA\) and \(\cA\) is abelian, \(\cC\) is also abelian.
The algebra $\cC$ is decomposed into direct sum of atomless and atomic parts. 
Moreover, since \(\cM\) is a tracial von Neumann algebra, there exists
at most countable system $\{p_0\}\cup\{p_n\}_{n\in I}$ of mutually orthogonal projections in \(\cC\) such that
\[
\cC = \Big(\bigoplus_{n\in I} \bC p_n\Big)\oplus \cC p_0,
\]
where \(\cC p_0\) is atomless.  Taking into account that $\cA\equiv L_\infty(0,1)$ has the separable predual and $S$ is countable, we obtain that $\cC p_0\equiv L_\infty(0,1)$ (see the proof of \cite[Theorem 3.5.2]{SS08}).
Since $\cM$ is atomless, for each $n \in I$, we can find\footnote{
Indeed, let $p\in \cM$ be a nonzero projection. Since $\cM_p$  is atomless, there exists (see \cite[p. 276]{SS08}) a family of
projections $\left\{e^{(n)}_j : 1 \le  j \le  2^n, n \ge 0\right\}$ in $\cM_ p$ such that
\begin{enumerate}
\item[(1)] $e^{(0)}=p$,
\item[(2)] $e^{(n+1)}_{2j-1}+e^{(n+1)}_{2j} = e^{(n)}_j$,
\item[(3)] $\tau\left(e^{(n)}_j\right) = 2^{-n}\tau(p)$,
\end{enumerate}
for $1 \le  j \le  2^n$ and $n \ge 0$. Let
\[
\cB_p = \left\{e^{(n)}_j : 1 \le  j \le  2^n, n \ge 0\right\}''\cap p\cM p
\]
be the abelian von Neumann subalgebra of $\cM_p$ generated by these elements. 
}  
inside \(\cM_{p_n}\) an atomless  abelian von Neumann subalgebra 
\(\cB_n\subset \cM_{p_n}\) with unit \(p_n\)
which is generated by a countable family of projections.
Setting
\[
\cB = \Big(\bigoplus_{n\in I} \cB_n\Big)\oplus \cC p_0,
\]
we obtain the required subalgebra. 
By \cite[Theorem 3.5.2]{SS08},
we have 
$\cB$ is trace-preserving $^*$-isomorphic to $L_\infty(0,1)$ equipped with the Lebesgue measure,
which completes the proof of the first part.

In order to prove the second statement, we only need to take $\cC=W^*(S)$.
The proof is complete.
\end{proof}

Given $0<a\leq \infty$, for any measurable function $f$ on $(0,a)$ and $s>0$, the dilation $D_s$ of $f$ is defined by setting\cite[p. 392]{DPS}
\begin{equation*}
    D_sf(t)=\begin{cases}
        f(\frac{t}{s}), &\text{if}\; t\in(0,a)\; \text{and}\; \frac{t}{s}\in(0,a).\\
        0, &\text{if}\; t\in(0,a)\; \text{and}\; \frac{t}{s}\not\in(0,a).
    \end{cases}
\end{equation*}
If $E$ is a Calkin function space and $f\in E$, we have (the case for symmetric function spaces can be found in \cite[p. 96 and Corollary 2 in p. 98]{KPS})
\begin{align}\label{Ds}
    D_sf\in E.
\end{align}

\begin{proof}[Proof of Theorem \ref{main1}]

Without loss of generality, we may assume that $\tau$ and $\nu$ are tracial states, i.e., $\tau(\mathbf{1})=\nu(\mathbf{1})=1$.

If $T$ is o-$t_m$ continuous, then
it follows from the proof of Theorem $\ref{positive}$ that $w^*T(\cdot)$ is a normal mapping from $E(\cM,\tau)$ into $F(\cN,\nu)$.
Hence, without loss of generality, we may assume that $T$ is normal (in particular, it is positive). 

Suppose that $T\not\equiv0$.
By Theorem \ref{normal},
there exists a normal Jordan $^*$-homomorphism $J:S(\cM,\tau)\to S(\cN,\nu)$ satisfying
\begin{align}\label{T(x)}
    T(x)=T(\mathbf{1})J(x),\quad  x\in E(\cM,\tau).
\end{align}
Moreover, $T(\mathbf{1})$ commutes with $J(x)$ for every $x\in E(\cM,\tau)$.

Let $0\le x_0\in E(\cM,\tau)$ such that $T(x_0)\ne0$.
By \cite[Lemma 1.3]{CKS} (or Lemma~\ref{AandB}), there exists an atomless abelian von Neumann subalgebra $\cA$ of $r(x_0)\cM r(x_0)$ containing all spectral projections of $x_0$.
Moreover, there exists a trace-preserving $^*$-isomorphism 
$$
\Phi:S(\cA ,\tau|_\cA)\to S\left(0,\tau\left(r(x_0)\right)\right),
$$
where the trace on $S\left(0,\tau\left(r(x_0)\right)\right)$ is the Lebesgue measure. In particular, $\mu(x)=\mu(\Phi(x))$ for each $x\in S(\cA ,\tau|_\cA)$.

Since $\cA$ is abelian, it follows that $J(\cA)$ is an abelian von Neumann algebra (see Proposition \ref{Jcom} and Remark \ref{Jsubalg}).
By Lemma \ref{isom}, there exists a projection $p\in P(\cA)$ such that
$J|_{\cA p}$ is a $^*$-isomorphism and $J(\mathbf{1}_{\cA} -p)=0$.
It can be easily checked that $\nu$ is a finite faithful normal trace on $J(\cA)$ and $J(p)$ is the unit of $J(\cA)$.
Noting that $J$ is continuous in the measure topology 
(see Proposition \ref{Jnormea}), we have
\begin{align}\label{Jisomorphism}
    J:pS(\cA,\tau|_\cA)p\to S(J(\cA),\nu|_{J(\cA)}) 
\end{align}
is a $^*$-isomorphism (see  Section \ref{s:Jordan}).

For each $x\in E(\cM,\tau)\cap S(\cA,\tau|_\cA)$, we have
\begin{align}\label{T|_A}
    T(x)\stackrel{\eqref{T(x)}}{=}T(\mathbf{1})J(x)
    \stackrel{J(x)=J(px)}{=}
    T(\mathbf{1})J(p)J(x)
    \stackrel{\eqref{T(x)}}{=}T(p)J(x).
\end{align}
Moreover, since $T(\mathbf{1})$ commutes with $J(x)$, 
arguing similarly to \eqref{T|_A}, we obtain that  $T(p)$ commutes with $J(x)$, which implies that each spectral projection of $T(p)$ commutes with $J(\cA)$ (see \cite[Proposition 2.2.22]{DPS}).

There exists a measurable subset $D$ of $(0,\tau(r(x_0)))$ satisfying $\Phi(p)=\chi_D$.
Then, we have $\Phi|_{\cA_p}$ is a $^*$-isomorphism from $\cA_p$ to $L_\infty(D)$, which implies that $J(\cA)$ is $^*$-isomorphic to $L_\infty(D)$.
Applying Lemma~\ref{AandB} to $J(\cA)$ and $T(p)$ we can construct an atomless abelian von Neumann algebra $\cB$ containing\footnote{Indeed, we may assume that  $\cB$ is  the von Neumann algebra generated by $J(\cA)$ and  spectral projections of $T(p)$. 
Since 
$J(\cA)$ is atomless, it follows  that $\cB$ is atomless. 
} $J(\cA)$ and all spectral projections of $T(p)$.
Moreover, there exists a trace-preserving $^*$-isomorphism 
$\Psi$ from $\cB$ onto $L_\infty(0,\nu(J(p)))$ equipped with the Lebesgue measure.
It follows from \cite[Proposition 2.9.3]{DPS} that $\Psi$ can be uniquely extended to a trace-preserving $^*$-isomorphism (still denoted by $\Psi$)
$$
\Psi:S(\cB,\nu|_\cB)\to S(0,\nu(J(p))).
$$
By \cite[Proposition 2.9.2(iv)]{DPS},   $\Psi$ preserves the singular value functions.

Let $s:=\frac{1}{\tau\left(r(x_0)\right)}\ge 1$.
Noting that 
\[
D_s:S\left(0,\tau\left(r(x_0)\right)\right)\xrightarrow{\rm onto} S(0,1)
\]
is a $^*$-isomorphism.
It is readily verified that $D_s^{-1}, \Phi^{-1}, \Psi$ are disjointness-preserving and normal (see  Section \ref{s:Jordan}). 

Since $s\ge 1$, it follows that
$\mu(D_s^{-1}(f))=D_s^{-1}(\mu(f))\le \mu(f)$ for each $ f\in E(0,1)$ 
(see \cite[p. 96]{KPS}), which implies that $\mu(D_s^{-1}(f))\in E(0,1)$.
Moreover, the fact that $\Phi$ preserves singular value function (see above) yields that
\begin{align}\label{comb}
    \left( \Phi^{-1} \circ D_s^{-1}\right)(f)\in E(\cM,\tau)\cap S(\cA,\tau|_\cA),\quad f\in E(0,1).
\end{align}

Let $x\in E(\cM,\tau)\cap S(\cA,\tau|_\cA)$ be arbitrary.
Then, we have
\[
J(x)\stackrel{J(\mathbf{1}_{\cA} -p)=0}{=}
J(pxp)\stackrel{\eqref{Jisomorphism}}{\in}S(J(\cA),\nu|_{J(\cA)})\subset S(\cB,\nu|_\cB)
\]
(see \cite[p. 113]{DPS}).
Noting that $T(p)\in S(\cB,\nu|_\cB)$ (see \cite[Proposition 2.1.4(iv)]{DPS}),
it follows from \eqref{T|_A} that $T(x)\in F(\cN,\nu)\cap S(\cB,\nu|_\cB)$.
Recall that $\Psi$ is trace-preserving, which together with \eqref{comb} yields that
\[
T_1(f):=\left(\Psi\circ T\circ \Phi^{-1} \circ D_s^{-1}\right)(f),
\quad  f\in E(0,1)
\]
is a normal disjointness-preserving mapping from $E(0,1)$ into $F(0,1)$.

Since $\Phi$ is trace-preserving and  $x_0\in E(\cM,\tau)\cap S(\cA,\tau|_\cA)$,
it follows from \eqref{Ds} that 
\[
\left(D_s\circ\Phi\right)(x_0)\in E(0,1),
\]
which together with $T(x_0)\ne 0$ yields that
$T_1\not \equiv 0$.
Moreover, by \cite[Lemma 1.24]{AA}, $T_1$ is normal implies that $T_1$ is order continuous.
Then by \cite[Theorem 1]{Abra1991}, we have $E(0,1)\subseteq F(0,1)$, which yields a contradiction.
Therefore, $T\equiv0$.
\end{proof}

\subsection{A noncommutative version of Abramovich's Proof}\label{analoproof}

The following lemma is   a noncommutative  version of \cite[Lemma 5]{Abra1991}, whose proof relies on the noncommutative Radon--Nikodym theorem \cite[Theorem 3.4.24]{DPS}.


\begin{lemma}\label{traceineq}
	Suppose that
	$\cM$ and $\cN$ are  
semifinite 
	  von Neumann algebras equipped with semifinite faithful normal traces $\tau$ and $\nu$, respectively. 
      Let $J: \cM  \xrightarrow{\rm into} \cN$ be a normal Jordan $^*$-homomorphism.
      If $0\neq p\in P(\cM)$ with 
	$0<\nu(J(p) )<\infty$,
	then  there exists a  nonzero projection $  q\le   p $ and a constant $C>0$  such that 
	for arbitrary projection 
	$e\leq q$, 
	we have
	\begin{align*}
		C^{-1}\tau(e) \leq	\nu(J (e))\leq C\tau (e).
	\end{align*}
\end{lemma}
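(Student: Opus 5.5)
We compare the two normal weights $\varphi:=\nu\circ J$ and $\tau$ on the reduced algebra $\cM_p=p\cM p$, and extract $q$ from the Radon--Nikodym derivative of one with respect to the other.

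First, $\varphi(x):=\nu(J(x))$ for $x\in(\cM_p)_+$ is positive, since Jordan $^*$-homomorphisms are positive. It is normal, because $J$ is normal and $\nu$ is normal. It is finite, since $\varphi(p)=\nu(J(p))<\infty$.

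It is tracial: $J(u)$ is a unitary in $J(p)\cN J(p)$ for $u\in U(\cM_p)$, and $J(uxu^*)=J(u)J(x)J(u)^*$. The latter holds because Jordan $^*$-homomorphisms satisfy $J(aba)=J(a)J(b)J(a)$, and one polarizes to get $J(uxu^*)$ for unitaries. Hence $\varphi(uxu^*)=\varphi(x)$.

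So $\varphi$ is a finite normal trace on $\cM_p$, though possibly not faithful. Its support is the projection $p-e_0$, where $e_0$ is the largest projection $\le p$ with $\nu(J(e_0))=0$. Since $\varphi$ is a normal trace, $e_0$ is central in $\cM_p$. Because $\nu(J(p))>0$, we have $p-e_0\neq0$.

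Next we make $\tau$ finite. The trace $\tau$ is semifinite on $\cM_p$, so pick a nonzero $p_1\le p-e_0$ with $\tau(p_1)<\infty$. This is possible because any nonzero projection majorizes a nonzero $\tau$-finite one.

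On $\cM_{p_1}$, both $\tau$ and $\varphi$ are finite faithful normal traces. Faithfulness of $\varphi$ holds because $p_1\le p-e_0$: if $\varphi(e)=0$ for some $e\le p_1$, then $e\le e_0$, which forces $e=0$.

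By the noncommutative Radon--Nikodym theorem \cite[Theorem 3.4.24]{DPS}, there is a positive operator $h$, affiliated with $\cM_{p_1}$ and $\tau$-measurable, with $\varphi(x)=\tau(hx)$ for $x\in(\cM_{p_1})_+$. Since both weights are traces, $h$ is affiliated with the center of $\cM_{p_1}$. (If the cited theorem does not give centrality directly, we use the trace property: $\tau(h\,uxu^*)=\tau(hx)$ for all unitaries $u$ forces $u^*hu=h$.) Faithfulness of $\varphi$ gives $n(h)=0$, i.e. $h>0$ a.e.

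Then choose $0<\lambda<\Lambda<\infty$ such that $q:=e^h[\lambda,\Lambda]\neq0$. This is possible because $h$ is finite and has trivial kernel: $e^h(0,\infty)=p_1$, and $e^h[1/n,n]\uparrow p_1$. The projection $q$ is central in $\cM_{p_1}$.

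For any projection $e\le q$, we have $e=eq=qe$ and $q$ commutes with $h$, so
\[
\nu(J(e))=\tau(he)=\tau(e\,hq\,e).
\]
Since $\lambda q\le hq\le\Lambda q$, this gives $\lambda\tau(e)\le\nu(J(e))\le\Lambda\tau(e)$. Taking $C=\max\{\Lambda,\lambda^{-1}\}$ finishes the proof.

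The main obstacle is the Radon--Nikodym step with its commutation property: we need $h$ to commute with every projection $e\le q$, so that $\tau(he)$ is squeezed between $\lambda\tau(e)$ and $\Lambda\tau(e)$. This is exactly where the tracial nature of $\nu\circ J$ matters, and where the identity $J(uxu^*)=J(u)J(x)J(u)^*$ for Jordan $^*$-maps must be checked carefully.
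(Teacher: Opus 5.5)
Your argument follows the paper's route. You take the Radon--Nikodym derivative $h$ of the finite normal functional $\nu\circ J$ on $\cM_p$ with respect to $\tau$, cut down to a nonzero spectral projection $q=e^h[\lambda,\Lambda]$, and squeeze $\tau(e\,hq\,e)$ between $\lambda\tau(e)$ and $\Lambda\tau(e)$. Your preliminary passage to the support $p-e_0$ and to a $\tau$-finite $p_1$ is harmless. It is also more careful than the paper, which asserts without proof that $\nu\circ J$ is faithful on $\cM_p$; that fails if $J$ kills part of $p$. Still, the passage is not needed: one can work in $L_1(\cM_p,\tau_p)$ directly, and all that is used is $h\neq 0$, which follows from $\nu(J(p))>0$.

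One step is false as written: a Jordan $^*$-homomorphism need not satisfy $J(uxu^*)=J(u)J(x)J(u)^*$.
\begin{itemize}
\item Polarizing $J(aba)=J(a)J(b)J(a)$ only gives the symmetrized identity $J(abc+cba)=J(a)J(b)J(c)+J(c)J(b)J(a)$.
\item Counterexample: take the transpose $J(x)=x^{T}$ on $M_2(\bC)$, $u=\mathrm{diag}(1,i)$ and $x=E_{12}$. Then $J(uxu^*)=-iE_{21}$, but $J(u)J(x)J(u)^*=iE_{21}$. In general an anti-homomorphism gives $J(u)^*J(x)J(u)$ instead.
\end{itemize}

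Traciality of $\nu\circ J$ on $\cM_p$ is nevertheless true. Split $J$, via a central projection of the von Neumann algebra generated by $J(\cM)$, into a $^*$-homomorphism part and a $^*$-anti-homomorphism part. In each part $\nu$ is invariant under the relevant conjugation, because $J(u)^*J(u)=J(u)J(u)^*=J(p)$.

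More to the point, you do not need traciality at all. The centrality of $h$, which you single out as the main obstacle, is never used. Your final computation $\tau(he)=\tau(e\,hq\,e)$ rests only on three facts:
\begin{itemize}
\item $e=qe$;
\item $hq=qh$, which is automatic since $q$ is a spectral projection of $h$;
\item cyclicity of $\tau$, i.e.\ $\tau(hq\,e)=\tau(e\,hq\,e)$.
\end{itemize}
So apply the Radon--Nikodym theorem (the identification of $(\cM_p)_*$ with $L_1(\cM_p,\tau_p)$) directly to the normal positive functional $\nu\circ J|_{\cM_p}$, as the paper does, and drop the traciality and centrality discussion.
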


\begin{proof}
 
 Let $\cM_p=p\cM p$. Then $\nu(J(\cdot))$ is a finite faithful normal  trace on $\cM_p$.
By the noncommutative Radon--Nikodym theorem \cite[Theorem 3.4.24]{DPS}, there exists a unique nonzero positive element $a\in L_1 (\cM_p, \tau_p)$ such that
\begin{align}\label{Radon}
    \nu\left(J(x)\right)=\tau(ax), 
    \quad x\in \cM_p.
\end{align}

Consider the spectral projection $q:=e^a{\left[C^{-1}, C\right]}$, 
where $C>0$ is a constant such that 
	$0\ne  e^a{\left[C^{-1}, C\right] } \in \cM_p $.  
	For  arbitrary nonzero projection 
	$e\leq q$, 
	we have 
	\begin{align}\label{tau12}
	    \nu\left(J(e)\right)\stackrel{\eqref{Radon}}{=}
        \tau (ae)=\tau(q a q e ) =\tau(eqaqe).
	\end{align}
  Noting that 
   $C^{-1}q\le qaq \le Cq$, 
   it follows from \cite[Proposition 2.2.24(iv)]{DPS} that
    \[
    e\left(C^{-1}q\right)e\le eqaqe \le e\left(Cq\right)e,
    \]
    which implies that
    $$
   C^{-1} \tau(e) =   \tau \left(e\left(C^{-1}q\right)e\right)  \le \tau(eqaqe) \stackrel{\eqref{tau12} }{=} 
   \nu\left(J(e)\right)  
   \le\tau(e\left(Cq\right)e)=C\tau(e).
    $$
The proof is completed.
\end{proof}

The following two lemmas   are obvious and therefore we omit their proofs. 
 For the case of  rearrangement invariant ideals on  a finite nonatomic measure space, the corresponding results  can be found in  \cite[Lemmas 6 and 7]{Abra1991}.

\begin{lemma}\label{choosestepfunction}
    Suppose that $(\cM,\tau)$ is an atomless von Neumann algebra with 
    a finite faithful normal  trace $\tau$.  
    Let $E(\mathcal{M}, \tau)$ and $F(\mathcal{M}, \tau)$  be two Calkin operator spaces.
    If $E(\mathcal{M}, \tau)\not\subseteq F(\mathcal{M}, \tau)$, then for each $0\neq p\in P(\cM)$, 
    there exists a simple operator
	$x= \sum\limits_{k\geq 1} t_k p_k\in E(\mathcal{M}, \tau)\backslash F(\mathcal{M}, \tau)$, where $0<t_k\in\mathbb{R}$ and $\{p_k\}_{k\geq 1}$ is a sequence of pairwise disjoint projections in $\cM$, such that $r(x)=\sum\limits_{k\ge 1}p_k\le p$.
\end{lemma}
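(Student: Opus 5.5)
Since $E(\cM,\tau)\not\subseteq F(\cM,\tau)$, the underlying Calkin function spaces satisfy $E(0,\tau(\mathbf{1}))\not\subseteq F(0,\tau(\mathbf{1}))$. The plan is to choose a function in $E\setminus F$, reduce it to a decreasing step function, shrink its support by a dilation, and then realize it inside $p\cM p$ using spectral projections.

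First, pick $f\in E(0,\tau(\mathbf{1}))\setminus F(0,\tau(\mathbf{1}))$. We may take $f=\mu(f)\ge 0$ decreasing. Since $F$ is a Calkin space, it contains every bounded function with finite-measure support ($\tau$ is finite), so $f$ is unbounded. Set
\[
g:=\sum_{k\ge 0}2^{k}\chi_{\{2^k< f\le 2^{k+1}\}}.
\]
Then $g\le f\le 2g+1$ pointwise. Hence $g\in E$, while $g\notin F$, since otherwise $f\le 2g+\chi_{(0,\tau(\mathbf{1}))}\in F$ would put $f$ in $F$. Only finitely many or countably many nonempty level sets occur, and the coefficients $2^k$ are positive and distinct.

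Next, shrink the support. Let $s:=\tau(p)/\tau(\mathbf{1})\le 1$ and put $h:=D_s g$, which is supported in $(0,\tau(p))$. By \eqref{Ds}, $h\in E$. If we had $h\in F$, then $D_{1/s}h\in F$ (again by \eqref{Ds}), and $D_{1/s}h$ agrees with $g$ on $(0,\tau(\mathbf{1}))$, which would force $g\in F$. So $h\in E\setminus F$, and it is a countably-valued step function: it takes value $2^k$ on a set of measure $s\,m(\{g=2^k\})$, and these measures sum to at most $\tau(p)$.

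To realize $h$ inside $p\cM p$, use that $\cM$ is atomless. The reduced algebra $\cM_p$ is then atomless, and the footnote construction in Lemma~\ref{AandB} gives an abelian atomless subalgebra of $\cM_p$ that is trace-preservingly isomorphic to $L_\infty(0,\tau(p))$. Transport the level sets of $h$ along this isomorphism to obtain pairwise orthogonal projections $p_k\le p$ with $\tau(p_k)=m(\{h=t_k\})$. Define $x:=\sum_k t_k p_k$, where the sum converges in $S(\cM,\tau)$ because the $p_k$ are orthogonal. Then $\mu(x)=\mu(h)$, so $x\in E(\cM,\tau)\setminus F(\cM,\tau)$ by the Calkin correspondence, and $r(x)=\sum_k p_k\le p$.

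The step needing the most care is the dilation argument, in particular checking that $D_{1/s}D_s g=g$ on the interval of a finite measure space and that \eqref{Ds} applies with $s<1$ as well. Avoiding the dilation would take more work, because membership in a Calkin space depends only on $\mu$, and keeping the same $\mu$ would require a support larger than $\tau(p)$ could hold.
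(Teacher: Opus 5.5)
The paper does not prove this lemma: it calls it obvious and refers to Abramovich's Lemmas 6 and 7 for the commutative case. Your argument is a sound way to fill it in, and it follows the natural route. You pass to the Calkin function spaces, replace a witness in $E\setminus F$ by a countably-valued step function, move its support into $(0,\tau(p))$, and realize it in $p\cM p$ with $\mu(x)=\mu(h)$. The checks all work: $g\le f\le 2g+1$, the identity $D_{1/s}D_s g=g$ on $(0,\tau(\mathbf{1}))$ for $s\le 1$, and the distribution computation for $x$. For the realization you do not need the full trace-preserving isomorphism with $L_\infty(0,\tau(p))$. It is enough to choose successive mutually orthogonal subprojections of $p$ with the prescribed traces, which is possible because $\cM$ is atomless and those traces sum to at most $\tau(p)$.

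Two corrections.

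First, your assertion that a Calkin space contains all bounded functions is false for $F=\{0\}$, which the definition allows. In that case $f$ may be bounded and $g$ may be $0$, so the step showing $g\notin F$ (which uses $\chi_{(0,\tau(\mathbf{1}))}\in F$) breaks down. The case is trivial: take $x=p$. It lies in $E(\cM,\tau)$ because $E(\cM,\tau)\not\subseteq\{0\}$, and a nonzero Calkin operator space over a finite atomless algebra contains $\cM$. But this case has to be split off before you use $\chi_{(0,\tau(\mathbf{1}))}\in F$.

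Second, your closing remark overstates the difficulty of avoiding dilation. Assume $F\neq\{0\}$ and $\tau(p)<\tau(\mathbf{1})$. Since $f=\mu(f)$ is finite at $t=\tau(p)$, the tail $f\chi_{[\tau(p),\tau(\mathbf{1}))}$ is bounded by $\mu(\tau(p);f)$ and hence lies in $F$. So the truncation $f\chi_{(0,\tau(p))}$ already lies in $E\setminus F$, is supported in $(0,\tau(p))$, and can then be discretized exactly as you did. For a nonzero Calkin space on a finite interval, membership depends only on the behaviour of $\mu(f)$ near $0$, so truncation works as well as dilation.
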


\begin{lemma}\label{deduce}
	Suppose that $\cM$ and $\cN$ are atomless von Neumann algebras equipped with   faithful normal   tracial 
    states $\tau$ and $\nu$, respectively.
    Let $E(\mathcal{M}, \tau)$ and $E(\cN,\nu)$ be the Calkin operator spaces corresponding to $E(0,1)$.
    Assume that  $x= \sum\limits_{k\geq 1} t_k p_k \in E(\mathcal{M}, \tau)$, where $0<t_k\in\mathbb{R}$ and $\{p_k\}_{k\geq 1}$ is a sequence of pairwise orthogonal projections in $\cM$.
	 If $\{q_k\}_{k\geq 1}$ is a sequence of pairwise orthogonal projections in $\cN$ satisfying 
	$$
    \frac{\nu(q_k)}{\tau(p_k)} \leq C
    $$
	for some $C> 0$, then
	$y= \sum\limits_{k\geq 1} t_k q_k\in E(\cN, \nu)$.
\end{lemma}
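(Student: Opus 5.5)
The target is to show $\mu(y)\le \mu(z)$ for a suitable $z\in E(\cN,\nu)$; we can then conclude $y\in E(\cN,\nu)$ because $E(\cN,\nu)$ is a Calkin operator space. Everything is reduced to distribution functions. Since $\{q_k\}$ are pairwise orthogonal and $t_k>0$, the spectral projection $e^{y}(s,\infty)$ equals $\sum_{k:\,t_k>s}q_k$ (the operator $y$ is the strong sum, affiliated with the abelian algebra generated by $\{q_k\}$). Hence
\[
d_y(s)=\sum_{k:\,t_k>s}\nu(q_k)\le C\sum_{k:\,t_k>s}\tau(p_k)=C\,d_x(s),\qquad s>0,
\]
where $d_x(s)=\tau(e^x(s,\infty))=\sum_{t_k>s}\tau(p_k)$ by the same reasoning in $\cM$.

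Next I would translate this into singular values. From $d_y\le C d_x = d_{D_C\mu(x)}$, where the last identity is the distribution of the dilated function on $(0,1)$ (with the truncation in the definition of $D_s$ only lowering the distribution when $C>1$), I will deduce $\mu(t;y)\le \mu(t/C;x)$ for $t\in(0,1)$, i.e.\ $\mu(y)\le D_C\mu(x)$. If $C\le 1$, then $d_y\le d_x$ and directly $\mu(y)\le\mu(x)$. Without loss of generality, one may replace $C$ by $\max\{C,1\}$.

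For membership, $\mu(x)\in E(0,1)$ because $x\in E(\cM,\tau)$ and $E(\cM,\tau)$ corresponds to $E(0,1)$. By \eqref{Ds}, $D_C\mu(x)\in E(0,1)$. Since $\cN$ is atomless with a tracial state, Lemma \ref{AandB} (or the standard construction) provides $z\in S(\cN,\nu)$ with $\mu(z)=D_C\mu(x)$, for instance $z=\Psi^{-1}(D_C\mu(x))$ for a trace-preserving isomorphism $\Psi$ of an abelian atomless subalgebra onto $L_\infty(0,1)$. Then $z\in E(\cN,\nu)$, and $\mu(y)\le\mu(z)$ gives $y\in E(\cN,\nu)$. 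Alternatively, one can avoid $z$: $\mu(y)\in E(0,1)$ directly by solidity of the Calkin function space, and then $y\in E(\cN,\nu)$ by the Calkin correspondence.

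The only mildly delicate step is the passage from the inequality of distribution functions to $\mu(y)\le D_C\mu(x)$. This follows from the definition of $\mu$ as the right-continuous inverse: for $t<1$, take $s=\mu(t/C;x)$; then $d_x(s)\le t/C$, so $d_y(s)\le t$, which gives $\mu(t;y)\le s$. I should also note that $y$ is $\nu$-measurable, which is automatic because $\nu$ is finite, so every operator affiliated with $\cN$ that is finite almost everywhere is measurable. Here $y$ is a diagonal sum with finite real coefficients, so it is finite.
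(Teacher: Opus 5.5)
Your proof is correct. The paper gives no proof of this lemma: it calls it obvious and refers to Abramovich's Lemma 7 for the commutative case. Your argument is the natural way to fill that in. You compare distribution functions to get $d_y\le C\,d_x$, invert via the right-continuous inverse to get $\mu(y)\le D_C\mu(x)$ for $C\ge 1$, and then apply \eqref{Ds} together with solidity of $E(0,1)$. The variant that avoids constructing $z$ is the cleanest, since $E(\cN,\nu)$ is by definition $\{a\in S(\cN,\nu):\mu(a)\in E(0,1)\}$.

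One small inaccuracy in your second paragraph. For $C>1$ the dilation $D_C$ involves no truncation on $(0,1)$, and the distribution function of $D_C\mu(x)$ is $\min\{C\,d_x,1\}$, not $C\,d_x$. This does no harm, because $d_y\le\nu(\mathbf{1})=1$. In any case your final argument does not use this identity: taking $s=\mu(t/C;x)$ gives $d_y(s)\le t$ and hence $\mu(t;y)\le\mu(t/C;x)$ directly.

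An equally short alternative uses the atomlessness of $\cN$ in place of \eqref{Ds}. Let $N=\lceil C\rceil$ and split each $q_k$ into $N$ mutually orthogonal subprojections of trace $\nu(q_k)/N\le\tau(p_k)$. This writes $y=y_1+\dots+y_N$ with $d_{y_j}\le d_x$ for each $j$. Hence $\mu(y_j)\le\mu(x)$, so each $y_j\in E(\cN,\nu)$, and therefore $y\in E(\cN,\nu)$. This splitting is essentially the argument that proves \eqref{Ds} itself.
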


\begin{proof}[Proof of Theorem \ref{main1}]\label{second}

Without loss of generality, we may assume that $\tau$ and $\nu$ are tracial states, i.e., $\tau(\mathbf{1})=\nu(\mathbf{1})=1$.

Suppose that $T\not\equiv0$.
As in the proof of Theorem \ref{main1} in subsection \ref{reducecommute}, we may assume that $T$ is normal (in particular, positive).
It follows from Theorem \ref{normal} that there exists a normal Jordan $^*$-homomorphism $J:S(\cM,\tau)\to S(\cN,\nu)$ satisfying
\begin{align}\label{Tx}
    T(x)=T(\mathbf{1})J(x),\quad  x\in E(\cM,\tau).
\end{align}
Moreover, $T(\mathbf{1})$ commutes with $J(x)$ for each $x\in E(\cM,\tau)$.

Since $T\not\equiv0$, it follows that there exists a constant $\varepsilon>0$ such that the spectral projection 
$q_\varepsilon:=e^{T(\mathbf{1})}(\varepsilon,\infty)\ne 0$.
Define
\[
T_\varepsilon(x):=T(x)q_\varepsilon,\quad   x\in E(\cM,\tau).
\]
For each $x\in E(\cM,\tau)$, it follows from $T(\mathbf{1})$ commutes with $J(x)$ that $T(\mathbf{1})$ commutes with $T(x)$ (see \eqref{Tx}).
By \cite[Proposition 2.2.22]{DPS}, $q_\varepsilon$ commutes with $T(x)$,
which implies that $T_\varepsilon$ is disjointness-preserving.
Moreover, since $T$ is normal, it follows that
\[
T_\varepsilon(x)=q_\varepsilon T(x)q_\varepsilon
\]
is also normal (see \cite[Proposition 2.2.25(iii)]{DPS}).
Consequently, 
$$T_\varepsilon: E(\cM,\tau)\xrightarrow{\rm into} F(\cN,\nu)$$ 
is a nonzero normal disjointness-preserving mapping.
Without loss of generality, we may replace $T$ and $J$
with $T_\varepsilon$ and $J_\varepsilon$, respectively (still denoted by $T$ and $J$).
Furthermore, it follows from $r(T(\mathbf{1}))=J(\mathbf{1})$ 
(see \eqref{upjq}) that
\begin{align}\label{epsilonres}
    T(\mathbf{1})\ge \varepsilon J(\mathbf{1}).
\end{align}

It follows from Lemma \ref{traceineq} that, there exists a nonzero projection $q\in\cM$ and a constant $C>0$ such that for arbitrary projection $e\le q$,
we have 
\begin{align}\label{ineq1}
    C^{-1}\tau(e)\le \nu(J(e))\le C\tau(e).
\end{align}
Let $F(\cM,\tau)$ be the Calkin operator space corresponding to $F(0,1)$.
By Lemma~\ref{choosestepfunction}, there exists a positive simple operator
\begin{align}\label{xform}
x=\sum\limits_{k\ge 1}t_ke_k\in E(\cM,\tau)\backslash F(\cM,\tau),
\end{align}
where $0<t_k\in\mathbb{R}$ for all $k\ge 1$ and $\{e_k\}_{k\geq 1}$ is a sequence of pairwise orthogonal projections in $\cM$.
Define 
\begin{align}\label{yform}
    y:=\sum\limits_{k\ge 1}t_kJ(e_k)\in S(\cN,\tau).
\end{align}
Observe that $\{J(e_k)\}_{k\geq 1}$ is a sequence of pairwise orthogonal projections in $\cN$ (see Proposition \ref{Jcom}).
It follows from the normality of $J$ that 
\begin{align*}
    J(x)\stackrel{\eqref{xform}}{=}
    J\left(\sum\limits_{k\ge 1}t_ke_k\right)=\sum\limits_{k\ge 1}t_kJ(e_k)
    \stackrel{\eqref{yform}}{=}y.
\end{align*}
Then, we have
\[
T(\mathbf{1})y=T(\mathbf{1})J(x)
\stackrel{\eqref{Tx}}{=}T(x)\in F(\cN,\nu).
\]
Moreover, noting that $T(\mathbf{1})$ commutes with $y$, 
by \cite[Proposition 2.2.22 and Proposition 2.2.24(iv)]{DPS},
we have
\[
T(\mathbf{1})y=y^{1/2}T(\mathbf{1})y^{1/2}\stackrel{\eqref{epsilonres}}{\ge}
y^{1/2}\varepsilon J(\mathbf{1})y^{1/2}=\varepsilon y,
\]
which implies that $\varepsilon y\in F(\cN,\nu)$, i.e., $y\in F(\cN,\nu)$.

On the other hand, since $e_k\le q$ for every $k$, it follows from \eqref{ineq1} that
\[
C^{-1}\tau(e_k)\le \nu(J(e_k))\le C\tau(e_k)
\]
for all $k\ge 1$. 
Recall that $y\in F(\cN,\nu)$. By Lemma \ref{deduce}, 
we have $x\in F(\cM,\tau)$,
which contradicts \eqref{xform}.
Therefore, $T\equiv0$. The proof is complete.	
\end{proof}

\subsection{Applications}

If the Calkin operator  space $E(\cM,\tau)$ is equipped  with an order continuous symmetric  norm, then we have the following corollary.

\begin{corollary}\label{ordercontinuournorm}
Assume that $\cM$ and $\cN$ are  two  atomless von Neumann algebras equipped with   faithful normal tracial states $\tau$ and  $\nu$ respectively. 
  Let   
    $E(\cM,\tau)$ (respectively,  $F(\cN,\nu)$) be the  symmetrically normed operator space  corresponding to the  symmetrically normed function space $E(0,1)$ (respectively, $F(0,1)$).
If $E(\cM,\tau)$ has order continuous norm and  $T: E(\mathcal{M}, \tau)\to F(\cN, \nu)$ is a nonzero disjointness-preserving bounded mapping, then 
	\[
	   E(0,1) \subseteq F(0,1).
	\]
Moreover, there exists a Jordan $^*$-homomorphism $J:\cM\to \cN$ such that
\[
E(J(\cM),\nu)\subseteq F(\cN,\nu),
\]
where $E(J(\cM),\nu)$ is defined by
\begin{align*}
    E(J(\cM),\nu):=\left\{x\in S(J(\cM),\nu): \mu(x)\in E(0,1)\right\}.
\end{align*}
\end{corollary}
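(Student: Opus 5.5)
The plan is to reduce Corollary \ref{ordercontinuournorm} to Theorem \ref{main1} through Proposition \ref{ocnorm}, arguing by contraposition for the inclusion.

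\emph{Step 1: the inclusion.} Since $\norm{\cdot}_E$ is order continuous and $T$ is a bounded disjointness-preserving mapping, Proposition \ref{ocnorm} shows that $w^*T(\cdot)$ is a positive, normal, disjointness-preserving mapping. Here $w$ is the partial isometry from the polar decomposition of $T(\mathbf{1})$, and $T=ww^*T$, because $l(T(x))\le l(w)$ for all $x$; this last fact follows from the representation $T(x)=w|T(\mathbf{1})|J(x)$.

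Suppose $E(0,1)\not\subseteq F(0,1)$. Then Theorem \ref{main1} applies to the normal (hence o-$t_m$ continuous) map $w^*T$ and gives $w^*T\equiv0$. Consequently $T=ww^*T\equiv 0$, which contradicts $T\ne 0$. Hence $E(0,1)\subseteq F(0,1)$.

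To make the continuity claim precise: normality gives $t_m$ convergence of $w^*T(x_i)$ along order convergent nets. This uses \cite[Theorem 2.6.3]{DPS}, applied to the decomposition $x_i-x$ bounded by $u_i\downarrow 0$.

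\emph{Step 2: the Jordan homomorphism and $E(J(\cM),\nu)\subseteq F(\cN,\nu)$.} By Theorem \ref{normal}, applied to the positive normal map $S:=w^*T$, there is a Jordan $^*$-homomorphism $J$ with $S(x)=S(\mathbf{1})J(x)$. Since $\nu$ is finite, $\nu(J(\mathbf{1}))<\infty$, so $J$ extends to $S(\cM,\tau)$.

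For the second inclusion, a cleaner route is to avoid $J$ entirely. Step 1 gives $E(0,1)\subseteq F(0,1)$. Take any $y\in S(J(\cM),\nu)$ with $\mu(y)\in E(0,1)\subseteq F(0,1)$; here $\mu(y)$ is computed with respect to $\nu$ restricted to $J(\cM)$, which is the same as computing it in $\cN$. Then $y\in F(\cN,\nu)$ by the very definition of the Calkin operator space corresponding to $F(0,1)$. Any Jordan $^*$-homomorphism therefore works, and the one from Theorem \ref{normal} is the natural choice.

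\emph{Main obstacle.} The essential content lies in Step 1. The delicate points there are:
\begin{itemize}
\item verifying that the hypotheses of Theorem \ref{main1} hold for $w^*T$, which requires its o-$t_m$ continuity (derived from normality as indicated above);
\item the identity $T=ww^*T$, which follows from the partial-isometry representation in Theorem \ref{positive} and Proposition \ref{ocnorm}.
\end{itemize}
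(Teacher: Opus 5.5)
Your proof is correct, but for the inclusion $E(0,1)\subseteq F(0,1)$ you verify the hypothesis of Theorem \ref{main1} by a more roundabout route than the paper.

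The paper shows directly that $T$ itself is o-$t_m$ continuous. If $|x_i-x|\le y_i\downarrow 0$, then order continuity of $\norm{\cdot}_E$ and boundedness of $T$ give $\norm{T(x_i)-T(x)}_F\to 0$. Hence $T(x_i)\to T(x)$ in measure by \cite[Proposition 4.4.4]{DPS}, and Theorem \ref{main1} applies to $T$ without any polar decomposition at the level of the corollary.

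You instead invoke Proposition \ref{ocnorm} to get the normal positive map $w^*T$ and apply the ``normal'' case of Theorem \ref{main1}. You then need the identity $T=ww^*T$, which you correctly read off from $T(x)=w|T(\mathbf{1})|J(x)$; this holds on all of $E(\cM,\tau)$ because $\nu(J(\mathbf{1}))<\infty$. This works, but the proof of Proposition \ref{ocnorm} itself rests on the same norm-convergence argument, so the paper's route is the more economical one.

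Your aside deriving o-$t_m$ continuity of $w^*T$ from normality via \cite[Theorem 2.6.3]{DPS} is imprecise for non-self-adjoint $x_i-x$: the bound $|x_i-x|\le u_i$ does not yield $\pm\mathrm{Re}(x_i-x)\le u_i$. It is also superfluous, since Theorem \ref{main1} explicitly covers normal maps.

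The second assertion is handled exactly as in the paper, as an immediate consequence of $E(0,1)\subseteq F(0,1)$. The one place where the choice of $J$ matters is that $J$ should be normal. Then $J(\cM)$ is a von Neumann algebra (Remark \ref{Jsubalg}) and $S(J(\cM),\nu)$ is defined, so ``any Jordan $^*$-homomorphism works'' is slightly overstated.
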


\begin{proof}

Suppose that $\{x_i\}_{i\in I}\subset E(\cM,\tau)$ satisfying $x_i\xrightarrow{(o)}0$.
Then, there exists a net $\{y_i\}_{i\in I}\subset E(\cM,\tau)_+$ satisfying $|x_i|\leq y_i\downarrow 0$.
Since $\norm{\cdot}_E$ is order continuous, it follows that
\[
\norm{x_i}_E\le \norm{y_i}_E\to 0
\]
Consequently,
\[
\norm{T(x_i)}_F\le \norm{T}\norm{x_i}_E\to 0
\]
By \cite[Proposition 4.4.4]{DPS}, we have $T(x_i)\xrightarrow{ t_m} 0$, which implies that $T$ is o-$t_{m}$ continuous.
Then, it follows from Theorem \ref{main1} that
\begin{align}\label{EinF}
    E(0,1) \subseteq F(0,1).
\end{align}

It follows from Proposition \ref{ocnorm} that 
\begin{align*}
    T(x)=w|T(\mathbf{1})|J(x),\quad  x\in E(\cM,\tau),
\end{align*}
where $J:S(\cM,\tau)\to S(\cN,\nu)$ is a normal Jordan $^*$-homomorphism.
Then $J(\cM)$ is a von Neumann algebra (see Remark \ref{Jsubalg}) and $\nu$ on $J(\cM)$ is naturally a finite faithful normal trace. 
By \eqref{EinF}, the proof is complete. 
\end{proof}

The following result is a direct consequence of Theorem \ref{main1}.

\begin{corollary}
Assume that $\cM$ and $\cN$ are  two  atomless von Neumann algebras equipped with  faithful normal tracial states $\tau$ and  $\nu$, respectively.
There is no nonzero bounded disjointness-preserving mapping
from $L_p( \cM, \tau)$
into $L_q( \cN, \nu)$, 
where $0<p<q\leq \infty$.
\end{corollary}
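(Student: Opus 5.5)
The plan is to deduce the statement from Theorem \ref{main1}, taking $E(0,1)=L_p(0,1)$ and $F(0,1)=L_q(0,1)$. Two things must be checked. First, the inclusion hypothesis fails: $L_p(0,1)\not\subseteq L_q(0,1)$ for $0<p<q\le\infty$. Second, every bounded disjointness-preserving mapping $T:L_p(\cM,\tau)\to L_q(\cN,\nu)$ is order-measure continuous. Theorem \ref{main1} then gives $T\equiv 0$.

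For the first point I use an explicit witness. Take $f(t)=t^{-\alpha}$ with $1/q<\alpha<1/p$, reading $1/q=0$ when $q=\infty$. Then $f\in L_p(0,1)$ but $f\notin L_q(0,1)$. Both spaces are Calkin function spaces on $(0,1)$. When $p<1$ or $q<1$ they are only quasi-normed, but Theorem \ref{main1} is formulated for Calkin spaces without any norm, so this causes no difficulty.

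For the second point I argue as in Corollary \ref{ordercontinuournorm}. Let $x_i\xrightarrow{(o)}x$, so there are $u_i\in L_p(\cM,\tau)_+$ with $|x_i-x|\le u_i\downarrow0$.
\begin{enumerate}
\item Since $p<\infty$, the (quasi-)norm of $L_p(\cM,\tau)$ is order continuous. Hence $\norm{x_i-x}_p\le\norm{u_i}_p\to0$. To see this, note $u_i\downarrow0$ gives $\mu(u_i)\downarrow0$ pointwise a.e. by normality of the trace. The decreasing functions $\mu(u_i)^p$ are dominated by the integrable $\mu(u_{i_0})^p$, so $\tau(u_i^p)\to0$ by monotone convergence. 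For a net one can pass through a cofinal countable argument.
\item Boundedness of $T$ (continuity in the quasi-norm topology) then gives $\norm{T(x_i)-T(x)}_q\to0$.
\item Convergence in $L_q(\cN,\nu)$ implies convergence in measure. This is the quasi-normed version of \cite[Proposition 4.4.4]{DPS}: Chebyshev's inequality gives $t\,\mu(t;y)^q\le\norm{y}_q^q$ for $q<\infty$, while for $q=\infty$ we have $\mu(t;y)\le\norm{y}_\infty$.
\end{enumerate}
So $T(x_i)\xrightarrow{t_m}T(x)$, and $T$ is o-$t_m$ continuous. The trace-normalization hypothesis $\tau(\mathbf{1})=\nu(\mathbf{1})=1$ of Theorem \ref{main1} holds because both traces are states.

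The main obstacle I expect is the quasi-Banach range $0<p<1$. There Proposition \ref{ocnorm} and Corollary \ref{ordercontinuournorm} are stated only for normed bimodules, so I rely on Theorem \ref{main1} directly rather than on those results. I must therefore verify the order continuity and measure-convergence estimates above without the triangle inequality. Both estimates use only monotonicity of $\mu$ and Chebyshev's inequality, so they go through as written.
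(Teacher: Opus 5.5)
Your proposal is correct and takes the paper's route: the paper records this corollary as a direct consequence of Theorem~\ref{main1}, and your check of o-$t_m$ continuity (order continuity of the $L_p$ quasi-norm, then boundedness, then convergence in measure) is the argument of Corollary~\ref{ordercontinuournorm}. Your separate treatment of the quasi-normed range $0<p<1$ supplies a detail the paper leaves implicit.
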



A linear injection $T$ between two Calkin spaces $E$ and $F$ is an order monomorphism whenever $T(x)\ge 0$ if and only if $x\ge 0$ for each $x\in E$.
If $T$ is additionally bijection, then $T$ is an order isomorphism.

\begin{remark}
    An order isomorphism between Banach lattices (in particular, Calkin function spaces) is a lattice isomorphism (see, e.g., \cite[Theorem 2.15]{AB1985}) and hence is disjointness-preserving.
    Consequently, two order isomorphic Calkin function spaces coincide (see, e.g., \cite[Corollary 4]{Abra1991}).
    
    However, in the noncommutative setting, order isomorphism is not necessary disjointness-preserving. 
   Indeed, suppose that 
    $\cM=L_\infty(0,1)\bar{\otimes} M_2(\mathbb{C})$.
    It follows from \cite[Lemma 3.7.5]{DPS} that $\cM$ is atomless.
    Define
    \begin{align*}
        a:=\chi_{(0,1)}\otimes
        \begin{pmatrix}
            1 & 1 \\
            0 & 1
        \end{pmatrix}
    \end{align*}
    and $T:\cM\to \cM$ as follows
    \[
    T(x)=axa^*,\quad  x\in \cM.
    \] 
    It follows from \cite[Proposition 2.2.24(iv)]{DPS} that $T$ and $T^{-1}$ are both positive.
    Let
    \begin{align*}
        x=\chi_{\left(0,1\right)}\otimes
        \begin{pmatrix}
            1 & 0 \\
            0 & 0
        \end{pmatrix},\quad
        y=\chi_{\left(0,1\right)}\otimes
        \begin{pmatrix}
            0 & 0 \\
            0 & 1
        \end{pmatrix}.
    \end{align*}
    Then,  $xy=0$. 
    However, both $T(x), T(y)$ are positive and 
    \[
    T(x)T(y)=\chi_{\left(0,1\right)}\otimes
    \begin{pmatrix}
        1 & 1\\
        0 & 0
    \end{pmatrix}\neq 0,
    \]
    which implies that $T$ is not disjointness-preserving.
\end{remark}

Now we consider the case when the von Neumann algebras and the traces are 
semifinite (infinite). 
Theorem \ref{main1} dose not hold in this case (see \cite[Section 5.4]{Abra1991}).
Nevertheless, for operators with  
$\tau$-finite right support projections (equivalently, left support projections), 
we obtain a result consistent with 
Theorem~\ref{main1}.

\begin{theorem}\label{infinite}
    Suppose that $\cM$ and $\cN$ are semifinite atomless von Neumann algebras equipped with semifinite faithful normal traces $\tau$ and $\nu$, respectively.
    Let $E(\cM,\tau)$ (respectively, $F(\cN,\nu)$) be a Calkin operator space corresponding to $E(0,\infty)$ (respectively, $F(0,\infty)$).
    If there exists a nonzero o-$t_{lm}$ continuous  (or normal) and disjointness-preserving mapping $T: E(\cM, \tau)\xrightarrow{\rm into} F(\cN, \nu) $, then for each $x_0\in E(\cM, \tau)$ with $\tau$-finite $r(x_0)$, we have $\mu(x_0)\in F(0,\infty)$.
\end{theorem}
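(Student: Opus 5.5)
The plan is to reduce to a finite piece of both algebras and then rerun the argument of the second proof of Theorem~\ref{main1} (Section~\ref{analoproof}). That argument yields a one-sided inclusion statement rather than a contradiction.

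\textbf{Step 1: make $T$ positive and normal.} By Theorem~\ref{infiniteTform} (or its proof), $w^*T(\cdot)$ is a positive, normal, disjointness-preserving map, and $w^*T(x)=bJ(x)$ for $x\in E(\cM,\tau)\cap\cM$, where $J$ is a normal Jordan $^*$-homomorphism. So we may assume $T$ is normal. Since $T\ne 0$ and every $0\le x\in E(\cM,\tau)$ is the increasing limit of positive simple operators in $\cF(\tau)$, some projection $e\in P_{fin}(\cM)$ has $T(e)\neq0$.

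\textbf{Step 2: cut down to finite traces on both sides.} Since $T(e)\in S(\cN,\nu)_+$, we can choose $\varepsilon>0$ such that $q_\varepsilon:=e^{T(e)}(\varepsilon,\infty)\neq 0$ and $\nu(q_\varepsilon)<\infty$. Put
\[
T_\varepsilon(x):=q_\varepsilon T(x)q_\varepsilon,\quad x\in E(\cM_e,\tau_e).
\]
The commutation of $T(e)$ with $J(x)$ (Theorem~\ref{normal} applied on $\cM_e$, where $\tau_e$ is finite) shows that $T_\varepsilon$ is again a nonzero normal disjointness-preserving map. It maps $E(\cM_e,\tau_e)$ into $F(q_\varepsilon\cN q_\varepsilon,\nu)$, and both traces are now finite. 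As in \eqref{epsilonres}, we get $T_\varepsilon(e)\ge\varepsilon J_\varepsilon(e)$, where $J_\varepsilon$ is the corresponding normal Jordan $^*$-homomorphism and $\nu(J_\varepsilon(e))<\infty$. Lemma~\ref{traceineq} then gives a nonzero projection $p\le e$ and a constant $C>0$ with $C^{-1}\tau(f)\le\nu(J_\varepsilon(f))\le C\tau(f)$ for all projections $f\le p$.

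\textbf{Step 3: transplant $x_0$ into $p\cM p$.} Let $x_0$ satisfy $\tau(r(x_0))<\infty$. It suffices to show that a step function $g=\sum_k t_k\chi_{[a_{k-1},a_k)}$ with $\mu(x_0)\le g\le 2\mu(x_0)$ (plus a bounded part) lies in $F(0,\infty)$; since $\mu(x_0)$ is supported on $(0,\tau(r(x_0)))$, so is $g$. Set $s:=\tau(p)/\tau(r(x_0))$. By \eqref{Ds}, $D_sg\in E(0,\infty)$. Since $\cM_p$ is atomless, we can realize $D_sg$ as a simple operator $x=\sum_k t_k e_k\in E(\cM,\tau)$ with pairwise orthogonal projections $e_k\le p$ and $\tau(e_k)=s(a_k-a_{k-1})$.

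\textbf{Step 4: push through $T_\varepsilon$ and pull back.} Arguing as in the second proof of Theorem~\ref{main1}, we have $T_\varepsilon(x)=T_\varepsilon(e)J_\varepsilon(x)\ge\varepsilon J_\varepsilon(x)$, so $y:=J_\varepsilon(x)=\sum_k t_kJ_\varepsilon(e_k)\in F(\cN,\nu)$. The projections $J_\varepsilon(e_k)$ are pairwise orthogonal and satisfy $\nu(J_\varepsilon(e_k))\ge C^{-1}\tau(e_k)$. Hence $\mu(y)\ge D_{1/C}\mu(x)$ up to the obvious rearrangement of the step heights, which gives $D_{s/C}g\in F(0,\infty)$. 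This is the reverse-direction analogue of Lemma~\ref{deduce}. Applying a dilation $D_{C/s}$ and \eqref{Ds} for $F$ then gives $g\in F(0,\infty)$, and therefore $\mu(x_0)\in F(0,\infty)$ because $\mu(x_0)\le g$ and $F$ is solid.

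\textbf{Main obstacle.} I expect Step 4 to be the hardest: comparing $\mu(y)$ with a dilation of $\mu(x)$ when the $t_k$ are not monotone in $k$ and the traces are only comparable up to $C$. I would handle this by ordering the $t_k$ decreasingly and computing the distribution functions directly: $d_y(\lambda)=\sum_{t_k>\lambda}\nu(J_\varepsilon(e_k))\ge C^{-1}d_x(\lambda)$, which gives $\mu(t;y)\ge\mu(Ct;x)$. A second point to check is that Step 2 really produces a finite-trace target. This is where the measurability of $T(e)$ replaces the finite-trace assumption of Theorem~\ref{main1}.
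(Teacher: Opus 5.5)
There is a genuine gap in Step 2, at exactly the point you flagged. Measurability of $T(e)$ only guarantees $\nu(e^{T(e)}(\varepsilon,\infty))<\infty$ for \emph{large} $\varepsilon$. It does not give an $\varepsilon$ for which this projection is both nonzero and of finite trace, because bounded operators are always $\nu$-measurable. If, say, $T(e)$ is a projection of infinite trace, then $e^{T(e)}(\varepsilon,\infty)$ is either that projection or $0$.

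This does occur under the hypotheses. Take $\cM=L_\infty(0,\infty)$, $\cN=L_\infty((0,\infty)\times(0,\infty))$, $E=F=L_\infty$ and $(Tf)(s,t)=f(s)$. Then $T$ is nonzero, normal and disjointness-preserving, but $T(e)$ has infinite trace for every nonzero $e\in P_{fin}(\cM)$, so Step 2 fails for every choice of $e$. You also cannot simply shrink $q_\varepsilon$ to a finite subprojection. To keep $T_\varepsilon$ disjointness-preserving, the cutting projection must commute with all of $T(E(\cM_e,\tau_e))$. This is why the paper only cuts with spectral projections of some $T(x)$, and, for $x\neq e$, only after restricting to an abelian subalgebra.

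What is missing is the input that makes a finite-trace cut possible, namely unboundedness. The paper first disposes of the case $E(\cM,\tau)\subseteq\cM$, where the conclusion is trivial: $\mu(x_0)$ is bounded with finite support and $F\neq\{0\}$. Your example above is exactly this situation. When $E(\cM,\tau)\not\subseteq\cM$, Lemma \ref{notbounded} yields $x_1\ge 0$ with $\tau$-finite support and $T(x_1)\notin\cN$. After replacing $x_0$ by $x_0+x_1$, the projections $e^{T(x_0)}(\lambda,\infty)$ are nonzero for every $\lambda$ and have trace at most $\tau(r(x_0))$ for large $\lambda$. These commute with $T(\cdot)$ only on an abelian subalgebra $\cA$ containing the spectral projections of $x_0$, so the paper restricts to $\cA$ and invokes Theorem \ref{main1}.

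You could instead stay in your framework by proving that, when $E(\cM,\tau)\not\subseteq\cM$, $\nu(e^{T(e)}(\varepsilon,\infty))<\infty$ for every $\varepsilon>0$. Suppose not, and put $r:=e^{T(e)}(\varepsilon,\infty)$. Since $r\le J(e)$, we get $\nu(rJ(e))=\nu(r)=\infty$. Because $\cM_e$ is atomless, the normal weight $f\mapsto\nu(rJ(f))$ on $\cM_e$ is then infinite on projections $B_n\le e$ of arbitrarily small trace. From these one builds an unbounded $x\in E(\cM_e,\tau_e)_+$ with $rT(x)r\ge c_n\varepsilon\, rJ\bigl(\bigvee_{k\ge n}B_k\bigr)$ and $c_n\to\infty$. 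This contradicts $T(x)\in S(\cN,\nu)$.

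With this repair, Steps 3 and 4 are sound and give a genuinely different finish from the paper's. Instead of matching traces and applying Theorem \ref{main1} by contradiction, you combine Lemma \ref{traceineq}, the comparison $d_y\ge C^{-1}d_x$, and the two-sided dilation invariance of Calkin spaces on $(0,\infty)$. This avoids both the trace matching and the abelian reduction.
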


To establish this theorem, we first introduce the following lemma which serves as an essential tool.

\begin{lemma}\label{notbounded}
Assume there exists a nonzero o-$t_{lm}$ continuous  (or normal) and disjointness-preserving mapping $T: E(\cM, \tau)\xrightarrow{\rm into} F(\cN, \nu)$.
If $E(\cM,\tau)\not\subset  \cM$, then 
      there exists a positive  element $x \in E(\cM,\tau)$ with $\tau$-finite support such that $T(x)\not\in \cN$. 
    
\end{lemma}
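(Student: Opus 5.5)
We use the representation of Theorem~\ref{infiniteTform} and reduce to a single finite corner where the $\tau$-finite part of $\cM$ carries an unbounded element. First, as in the proofs of Theorem~\ref{main1}, we replace $T$ by $w^*T(\cdot)$. This mapping is normal, positive and disjointness-preserving, and it is nonzero whenever $T$ is. Since $T(x)=w\,w^*T(x)$, unboundedness of $w^*T(x)$ forces unboundedness of $T(x)$, so it suffices to treat a normal $T$. Because $T\neq 0$ and $T$ is normal, there is a projection $e\in P_{fin}(\cM)$ with $T(e)\neq 0$. Indeed, every positive $x\in E(\cM,\tau)$ is the supremum of an increasing net of positive simple operators in $\cF(\tau)$, and if $T$ vanished on all of $P_{fin}(\cM)$ it would vanish on that net and hence on $x$.

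Next we restrict to $e\cM e$, which is atomless with a finite trace. The hypothesis $E(\cM,\tau)\not\subset\cM$ gives some $z\in E(\cM,\tau)$ that is unbounded, so $\mu(z)$ is unbounded near $0$. Since $\cM$ is atomless, we can realize a rearrangement of $\mu(z)\chi_{(0,\tau(e'))}$ inside $e'\cM e'$ for any nonzero $e'\le e$, using Lemma~\ref{AandB} or the dyadic construction in its footnote. The result is a positive unbounded $x\in E(\cM,\tau)$ with $r(x)\le e'$, and it lies in $E(\cM,\tau)$ because $\mu(x)\le\mu(z)$.

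The key step is choosing $e'$ so that $T(x)$ is also unbounded. We apply Theorem~\ref{normal} to $T|_{E(e\cM e)}$, i.e.\ to $y\mapsto T(eye)$. Here $T(e)\in F(\cN,\nu)$ is a positive measurable operator, and $J_e$ is a normal Jordan homomorphism with $T(eye)=T(e)J_e(y)$. We may not have $\nu(J(e))<\infty$, but as in the second proof of Theorem~\ref{main1} we cut down by $q_\varepsilon=e^{T(e)}(\varepsilon,\infty)\neq 0$. The projection $q_\varepsilon$ commutes with the range, so $T_\varepsilon:=q_\varepsilon T(\cdot)$ satisfies $T_\varepsilon(e)\ge\varepsilon J_\varepsilon(e)$, where $J_\varepsilon(\cdot)=q_\varepsilon J(\cdot)$ is still a nonzero normal Jordan homomorphism. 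We also need $\nu(q_\varepsilon)<\infty$ for large $\varepsilon$, which follows from $\nu$-measurability of $T(e)$. With that, Lemma~\ref{traceineq} yields a nonzero $q\le e$ and $C>0$ such that $\nu(J_\varepsilon(f))\ge C^{-1}\tau(f)>0$ for all nonzero $f\le q$, so $J_\varepsilon$ is injective on projections below $q$.

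We now take $e'=q$ and build $x=\sum_k t_k p_k$ with $p_k\le q$ pairwise orthogonal and nonzero and $t_k\uparrow\infty$, as in Lemma~\ref{choosestepfunction}. Then
\[
T(x)\ge T_\varepsilon(x)=T_\varepsilon(\mathbf{1}_e)J_\varepsilon(x)\ge \varepsilon\sum_k t_kJ_\varepsilon(p_k).
\]
In this sum the $J_\varepsilon(p_k)$ are nonzero orthogonal projections, so $T(x)$ is unbounded and $T(x)\notin\cN$.

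The main obstacle is the cut-down step. One difficulty is that domination of $q_\varepsilon T(x)q_\varepsilon$ by $T(x)$ requires care with commutation, which is handled by Theorem~\ref{normal}'s statement that $T(e)$ commutes with $J(\cdot)$. The other is ensuring that $q_\varepsilon$ has finite trace, so that Lemma~\ref{traceineq} applies to $J_\varepsilon$.
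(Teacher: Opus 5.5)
Your route differs from the paper's, and one step in it does not hold as written.

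\textbf{The gap.} The step in question is ``$\nu(q_\varepsilon)<\infty$ for large $\varepsilon$, which follows from $\nu$-measurability of $T(e)$.'' Measurability only gives $\nu(e^{T(e)}(\varepsilon,\infty))<\infty$ for $\varepsilon$ above some threshold. If $T(e)$ is bounded, that threshold can be $\ge\|T(e)\|_{L_\infty(\cN)}$, so $q_\varepsilon=0$ as soon as its trace is finite. For instance, a projection of infinite trace is $\nu$-measurable, yet all its nonzero spectral projections have infinite trace. So you cannot guarantee $0<\nu(J_\varepsilon(e))<\infty$.

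Both places where you use this are therefore unsupported:
\begin{enumerate}
\item the application of Lemma~\ref{traceineq};
\item the identity $T_\varepsilon(x)=T_\varepsilon(\mathbf{1}_e)J_\varepsilon(x)$ for the unbounded $x$, which relies on the extension part of Theorem~\ref{normal} and hence on $\nu(J_\varepsilon(e))<\infty$.
\end{enumerate}
Finiteness does in fact hold under the lemma's hypotheses. Proving it, however, requires pushing an unbounded element of $E(\cM,\tau)$ through $T$ and using measurability of the image, which is essentially the content of the lemma itself.

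\textbf{How to repair it.} You only use injectivity of $J_\varepsilon$ below $q$, so no trace is needed.
\begin{enumerate}
\item The projections $f\le e$ with $J_\varepsilon(f)=0$ are closed under $\vee$: since $f_1\vee f_2=r(f_1+f_2)$, positivity and normality give $J_\varepsilon(f_1\vee f_2)=0$. They are also closed under increasing suprema, again by normality.
\item Hence they have a largest element $g$, and $q:=e-g\neq 0$ because $J_\varepsilon(e)=q_\varepsilon\neq0$.
\item Every nonzero $f\le q$ then has $J_\varepsilon(f)\neq 0$.
\end{enumerate}
For the final estimate you only need the representation on projections. Suppose $T(x)\in\cN$. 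Positivity gives $\|T(x)\|\ge t_k\|T(p_k)\|$. On the other hand, $q_\varepsilon T(p_k)q_\varepsilon=(T(e)q_\varepsilon)J_\varepsilon(p_k)\ge\varepsilon J_\varepsilon(p_k)\neq0$, so $\|T(p_k)\|\ge\varepsilon$. This contradicts $t_k\to\infty$. With these changes your argument works.

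\textbf{Comparison with the paper.} The paper reaches the same uniform lower bound on subprojections much more cheaply. After reducing to a normal $T$, it takes $p_0\in P_{fin}(\cM)$ and $K>0$ with $e^{T(p_0)}(K,\infty)\neq0$. Zorn's lemma then gives a maximal $e_0\le p_0$ with $\|T(e_0)\|_{L_\infty(\cN)}\le K$; chains are handled by normality. Since $T(e_0+p)=T(e_0)+T(p)$ has disjoint summands, every nonzero $p\le p_0-e_0$ must satisfy $\|T(p)\|_{L_\infty(\cN)}>K$. No Jordan representation, spectral cut-down or Radon--Nikodym argument is needed. The last step, a step operator from Lemma~\ref{choosestepfunction} supported under this projection, is the same as yours.
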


\begin{proof}

By the proof of Theorem \ref{infiniteTform}, we have $w^*T(\cdot)$ is normal. Without loss of generality, we may assume that $T$ is normal (in particular, positive).

Since $T\not \equiv 0$, it follows that there exists a constant $K>0$
and a projection $p_0\in E(\cM,\tau)$ satisfying $e^{T(p_0)}(K,\infty)\ne 0$.
Noting that $\tau$ is semifinite and $T$ is normal, we may assume that $\tau(p_0)<\infty$.
Denote
\[
P_0:=\left\{ p\le p_0:\norm{T(p)}_{L_\infty(\cN)}\le K \right\} \subset P(\cM).
\]
By Zorn's Lemma, there exists a maximal element $e_0$ of $P_0$.
Let $e:= p_0-e_0$. Then for each nonzero projection $p\le e$, 
we have $e^{T(p)}(K,\infty)\ne 0$.

It follows from Lemma \ref{choosestepfunction} that there exists an element $$x=\sum\limits_{k\geq 1} \lambda_k p_k\in E(\mathcal{M}, \tau)\backslash \mathcal{M},$$
where $0<\lambda_k\in\mathbb{R}$ and $\{p_k\}_{k\geq 1}$ is a sequence of pairwise disjoint projections in $\cM$ satisfying $r(x)=\sum\limits_{k\ge 1}p_k\le e$. 
We claim that $T(x)\not\in \cN$.
Otherwise, since $T$ is normal and disjointness-preserving, it follows that
\[
\norm{T(x)}_{L_\infty(\cN)}=\norm{\sum\limits_{k\geq 1} \lambda_k T(p_k)}_{L_\infty(\cN)}
=\sup_{k\ge 1}\lambda_k\norm{T(p_k)}_{L_\infty(\cN)}
\stackrel{p_k\le e}{\ge}
K\sup_{k\ge 1}\lambda_k=\infty.
\]
which  is a contradiction. This   completes the proof.
\end{proof}

\begin{proof}[Proof of Theorem \ref{infinite}]
One can see from the 
  proof of Theorem \ref{infiniteTform} that  $w^*T(\cdot)$ is normal, where $w^*$ is a partial isometry. Without loss of generality, we may assume that $T$ is normal (in particular, positive).

If $E(\cM,\tau)\subset \cM$, then $x_0\in \cF(\tau) $ for any $x_0$ with $\tau$-finite support  $r(x_0)$.
Since $T$ is nonzero, it follows that $F(0,\infty)$ is nonzero. 
This together with the definition of Calkin spaces yields that $\mu(x_0)\in F(0,\infty )$. 

We only need to consider the case that $E(\cM,\tau)\not\subset  \cM$.
By Lemma \ref{notbounded}, there exists  $x_1\ge 0$ such that $r(x_1)$ is $\tau$-finite and   $T(x_1)\not \in \cN$.
Hence, $T(x_0+x_1)\ge T(x_1) \not \in \cN$. 
To show that $\mu(x_0)\in F(0,\infty)$, it suffices to show that  $\mu(x_0+x_1)\in F(0,\infty)$.
Hence, by replacing $x_0$ with $x_0+x_1$, we may assume that $T(x_0)\not\in \cN$. 

By \cite[Lemma 1.3]{CKS} (or Lemma~\ref{AandB}), we can find an abelian von Neumann subalgebra $\cA$ of $r(x_0)\cM r(x_0)$, containing all the spectral projections of $x_0$.
Noting that $\tau|_\cA$ is a finite faithful normal trace on $\cA$,
define
$$E(\cA,\tau|_\cA):=E(\cM,\tau)\cap S(\cA).$$
It is readily verified that $E(\cA,\tau|_\cA)$ is a Calkin operator space affiliated with $\cA $.

It follows from Theorem \ref{normal} that
\[
T|_{E(\cA,\tau|_\cA)}(x)=T(\mathbf{1}_\cA)J(x),
\quad  x\in E(\cA,\tau|_\cA),
\]
where $J:S(\cA,\tau|_\cA)\xrightarrow{\rm into} S(\cN,\nu)$ 
is a normal $^*$-homomorphism ($\cA$ is abelian, see Proposition \ref{Jcom}).
By Lemma \ref{isom}, there exists $p\in P(\cA)$ satisfying $J|_{\cA p}$ is a $^*$-isomorphism from $\cA p$ onto $J(\cA)$ and $J|_{\cA (\mathbf{1}_\cA-p)}=0$. 
Therefore,
\begin{align}\label{TAx}
    T|_{E(\cA,\tau|_\cA)}(x)=T(p)J(x),
\quad  x\in E(\cA,\tau|_\cA),
\end{align}
Moreover, arguing similarly as \eqref{T|_A}, we obtain that 
$T(p)$ commutes with $J(x)$ for each $x\in E(\cA,\tau|_\cA)$.

It follows from $T(x_0)\notin \cN$ that
there exists a constant $\lambda>0$ such that $$0<\nu\left(e^{T(x_0)}\left(\lambda,\infty\right)\right)\le \tau(r(x_0)) .$$
Let $q:=e^{T(x_0)}(\lambda,\infty)$ and define
\[
T_q(x):=T(x)q,\quad   x\in E(\cA,\tau|_\cA).
\]
For each $x\in E(\cA,\tau|_\cA)$, since $T(p)$ commutes with $J(x)$ and $S(\cA)$ is abelian (see \cite[Corollary 2.2.23]{DPS}), 
it follows from \eqref{TAx}  that
\[
T(x)T(x_0)=T(p)J(x)T(p)J(x_0)=T(p)J(x_0)T(p)J(x)=T(x_0)T(x).
\]
By \cite[Proposition 2.2.22]{DPS}, we have
\begin{align}\label{Tqform}
    T_q(x)=T(x)q=qT(x)q\in F(\cN,\nu)\cap S(\cN_q),
\quad  x\in E(\cA,\tau|_\cA).
\end{align}

It follows from $\cN$ is atomless that there exists a projection $q_0\ge q$ satisfying $\nu(q_0)=\tau(r(x_0))$. Note that 
$$F(\cN _{q_0},\nu|_{\cN _{q_0}}):=F(\cN,\nu)\cap S(\cN _{q_0})$$ 
is a Calkin operator space affiliated with $\cN _{q_0}$ and 
\[
T_q:E(\cA,\tau|_\cA)\xrightarrow{\rm into}F(\cN _{q_0},\nu|_{\cN _{q_0}})
\]
is a nontrivial ($T_q(x_0)\ne 0$) and normal (see \eqref{Tqform} and \cite[Proposition 2.2.25(iii)]{DPS}) mapping.
Let $x, y\in E(\cA,\tau|_\cA)$ be arbitrary such that $xy=0$.
Recall that  $q$ commutes with $T(x)$ and $T(y)$, which implies
\[
T_q(x)T_q(y)=qT(x)qT(y)q=qT(x)T(y)q=0.
\]
Therefore, $T_q$ is a nontrivial normal disjointness-preserving mapping.
By Theorem~\ref{main1}, we have 
$\mu(x_0)\in F(0,\nu(q_0))\subset F(0,\infty)$.
\end{proof}

\section{Positive isometries}

Let $\cM$ be a semifinite von Neumann algebra equipped with a semifinite faithful normal trace $\tau$. 
In this section, we obtain a necessary and sufficient condition for the orthogonality of positive measurable operators (see Theorem \ref{ab=0}).
Based on this, 
we establish the disjointness-preserving property of positive isometries on normed $\cM$-bimodules, which is the key to describe positive isometries on $\cM$-bimodules.

\subsection{Orthogonality of measurable operators}\label{Sec:ortho}

Let $p,q\in P(\cM)$ be orthogonal projections with $p+q=\mathbf 1$.
For any operator $x \in S(\cM)$, we write its block decomposition with respect to this splitting as
\[
x = \begin{pmatrix}
x_{pp} & x_{pq} \\
x_{qp} & x_{qq}
\end{pmatrix},
\]
where $x_{pp} = p x p$, $x_{pq} = p x q$, $x_{qp} = q x p$ and  $x_{qq} = q x q$.

The following lemma is well-known (see  \cite[Lemma~3.1]{PaulsenCB}   for the case of bounded operators). 
\begin{lemma}\label{Paulsen}
Let
\begin{align*}
x=
\begin{pmatrix}
p & c\\
c^* & q
\end{pmatrix}
\in S(\mathcal{M})
\end{align*}
be positive.
Then  $c\in\mathcal M$ and $\left\|c\right\|_{L_\infty(\cM)} \le 1$.
\end{lemma}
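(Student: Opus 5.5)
Here $p$ and $q$ denote the orthogonal projections from the block decomposition, and the matrix entries are really the compressions $x_{pp}=p$, $x_{pq}=c$ (with $c=pcq$) and $x_{qq}=q$. The plan is to reduce to the bounded case of \cite[Lemma~3.1]{PaulsenCB} by a norm-bounded estimate obtained from positivity. Concretely, we aim to show that for every vector $\xi$ in a suitable dense core and every $\eta$,
$$|\langle c\eta,\xi\rangle|\le \|p\xi\|\,\|q\eta\|.$$
This immediately gives that $c$ extends to a bounded operator of norm at most $1$.

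The first step is to set up the positivity inequality in the unbounded setting. Since $x\in S(\cM)$ is positive, we have $\langle x\zeta,\zeta\rangle\ge 0$ for every $\zeta$ in $\mathrm{dom}(x)$. The issue is to find enough vectors of the form $\zeta=\xi+\lambda\eta$ with $\xi\in p\cH$ and $\eta\in q\cH$ lying in the domain. To avoid domain problems I would argue inside $S(\cM)$ with the algebraic (strong) operations, which the paper already uses freely. For projections $e\le p$ and $f\le q$ with $ece$ bounded we use the elements of $\cM$ given by $e x e$-type compressions. More cleanly, take the spectral projections $e_n:=e^{|c^*|}[0,n]\le p$ and $f_n:=e^{|c|}[0,n]\le q$. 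Then $(e_n+f_n)x(e_n+f_n)$ is a positive bounded operator, since its off-diagonal entry $e_nc f_n$ is bounded. Its block matrix is
$$\begin{pmatrix} e_n & e_ncf_n\\ f_nc^*e_n & f_n\end{pmatrix},$$
and positivity of $(e_n+f_n)x(e_n+f_n)$ follows from \cite[Proposition 2.2.24(iv)]{DPS}.

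Applying the bounded Paulsen lemma to this matrix gives $\|e_ncf_n\|\le 1$ for every $n$. Since $c$ is $\tau$-measurable and closed, $e_n\uparrow l(c)\le p$ and $f_n\uparrow r(c)\le q$, so $e_ncf_n\to c$ in measure (or locally in measure). A uniformly norm-bounded net converging in measure has a limit in $\cM$ whose norm is at most $\liminf$ of the norms, because the closed unit ball of $\cM$ is $t_m$-closed in $S(\cM,\tau)$ \cite{DPS}. Hence $c\in\cM$ and $\|c\|_{L_\infty(\cM)}\le 1$. If only $S(\cM)$ without a trace is intended, we instead use $e_ncf_n=e_nc f_n$ together with Lemma \ref{solim}: the net is uniformly bounded and satisfies $e_mcf_m=e_m(e_ncf_n)f_m$ for $n\ge m$, so it has an so-limit $y$ with $e_nyf_n=e_ncf_n$. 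Closedness of $c$ then gives $c=y$.

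The main obstacle is justifying that the compression $(e_n+f_n)x(e_n+f_n)$ really equals the displayed bounded matrix and is positive. This requires care with strong products of unbounded operators and with the fact that $e_n c$ need not be bounded while $e_ncf_n$ is. I would first verify boundedness of $e_ncf_n$ via $|c f_n|=|c|f_n\le n$, and then read off the diagonal entries $e_npe_n=e_n$ and $f_nqf_n=f_n$ directly.
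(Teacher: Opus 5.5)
Your reduction to the bounded case works once you fix the truncating projections. As written they are wrong: $e^{|c^*|}[0,n]\ge n(c^*)=\mathbf{1}-l(c)\ge q$ and $e^{|c|}[0,n]\ge \mathbf{1}-r(c)\ge p$. So $e_n\le p$ and $f_n\le q$ both fail, and the displayed block form of $(e_n+f_n)x(e_n+f_n)$ is not valid. Use $e_n:=e^{|c^*|}(0,n]\le l(c)\le p$ and $f_n:=e^{|c|}(0,n]\le r(c)\le q$; this is what your later claim $e_n\uparrow l(c)$, $f_n\uparrow r(c)$ presupposes. With that fix, the compression identity is just associativity in the $^*$-algebra $S(\cM)$. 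Boundedness follows from $|cf_n|=|c|f_n\le n\mathbf{1}$, and the bounded lemma gives $\|e_ncf_n\|_{L_\infty(\cM)}\le 1$.

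The final step (``closedness of $c$ gives $c=y$'') should be made explicit, and it is simpler than you suggest. For $\eta\in f_m\cH$ and $n\ge m$ one has $e_ncf_n\eta=e_nc\eta\to l(c)c\eta=c\eta$. Hence $\||c|\eta\|=\|c\eta\|\le\|\eta\|$ for all $\eta\in e^{|c|}(0,m]\cH$ and all $m$, which forces $e^{|c|}(1,\infty)=0$. So $c$ is bounded, lies in $\cM$ and has norm at most $1$. No trace, no measure topology and no Lemma~\ref{solim} are needed, and you could even drop the left truncation and compress by $p+f_n$ instead.

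The paper gives no proof of this lemma: it calls it well known and cites Paulsen only for bounded operators. Your argument therefore supplies the passage from the bounded to the unbounded case that the paper leaves implicit. A shorter route avoids truncations and domain questions altogether. Since $pxp=p$ and $qxq=q$, one has $x=\mathbf{1}+h$ with $h:=c+c^*$. Conjugating by the self-adjoint unitary $u:=p-q\in\cM$ gives $uxu=\mathbf{1}-h\ge 0$, because $ucu=-c$. Hence $-\mathbf{1}\le h\le\mathbf{1}$ among self-adjoint elements of $S(\cM)$, so $h\in\cM$ with $\|h\|_{L_\infty(\cM)}\le 1$, and $c=phq$. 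Your version keeps the dependence on the bounded Paulsen lemma visible. The symmetry trick is self-contained and works verbatim for locally measurable operators, as in the paper's later remark.
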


The following lemma is an infinite-dimensional version of \cite[Proposition 1.3.2]{Bha}.

\begin{lemma}\label{factorization}
Let
\begin{align*}
x=
\begin{pmatrix}
a & c\\
c^* & b
\end{pmatrix}
\in S(\mathcal{M})
\end{align*}
be positive.
Then there exists a contraction $d\in p\mathcal{M}q$ such that
$
c=a^{1/2}\, d\, b^{1/2}.
$ 
 In particular, \(l(c)\le l(a)\) and \(l(c^*)\le l(b)\).
\end{lemma}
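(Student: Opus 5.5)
We reduce to Lemma~\ref{Paulsen} by a regularisation that makes the diagonal blocks invertible. Here $x\ge0$ is written with respect to $p+q=\mathbf 1$, so $a=pxp\ge0$ in $S(p\cM p)$ and $b=qxq\ge0$ in $S(q\cM q)$.

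\textbf{Step 1 (regularise).} For $\varepsilon>0$ put $a_\varepsilon:=a+\varepsilon p$ and $b_\varepsilon:=b+\varepsilon q$. These are positive and invertible in $S(p\cM p)$ and $S(q\cM q)$, with bounded inverses $a_\varepsilon^{-1/2}\le\varepsilon^{-1/2}p$ and $b_\varepsilon^{-1/2}\le \varepsilon^{-1/2}q$ (functional calculus). Set $D_\varepsilon:=a_\varepsilon^{-1/2}+b_\varepsilon^{-1/2}\in\cM$, which is block diagonal and self-adjoint. Since $x+\varepsilon\mathbf 1\ge0$, \cite[Proposition 2.2.24(iv)]{DPS} gives
\[
D_\varepsilon(x+\varepsilon\mathbf 1)D_\varepsilon=\begin{pmatrix} p & d_\varepsilon\\ d_\varepsilon^* & q\end{pmatrix}\ge0,\qquad d_\varepsilon:=a_\varepsilon^{-1/2}c\,b_\varepsilon^{-1/2}.
\]
Lemma~\ref{Paulsen} then yields $d_\varepsilon\in p\cM q$ with $\|d_\varepsilon\|\le1$. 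Consequently $c=a_\varepsilon^{1/2}d_\varepsilon b_\varepsilon^{1/2}$ holds in $S(\cM)$, by associativity of the strong product.

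\textbf{Step 2 (pass to the limit).} The net $\{d_\varepsilon\}$ lies in the unit ball, which is weak-operator compact. Extract a subnet $\varepsilon_k\downarrow0$ with $d_{\varepsilon_k}\to d$ weakly, where $\|d\|\le1$ and $d=pdq$. We must show $c=a^{1/2}db^{1/2}$. We have $a_\varepsilon^{1/2}\to a^{1/2}$ in measure, since $\|a_\varepsilon^{1/2}-a^{1/2}\|\le\varepsilon^{1/2}$ uniformly.

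Weak limits do not combine directly with measure convergence, so we test against vectors instead. Take $\xi$ in the range of a spectral projection $e^{b}[0,n]$ and $\eta$ in the range of $e^{a}[0,n]$. There the operators $a_\varepsilon^{1/2}$ and $b_\varepsilon^{1/2}$ are bounded and converge in norm. This gives
\[
\langle c\xi,\eta\rangle=\langle d_\varepsilon b_\varepsilon^{1/2}\xi,a_\varepsilon^{1/2}\eta\rangle\to\langle d\,b^{1/2}\xi,a^{1/2}\eta\rangle .
\]
Hence $e^a[0,n]\,c\,e^b[0,n]=e^a[0,n]\,a^{1/2}db^{1/2}\,e^b[0,n]$ for every $n$. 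Letting $n\to\infty$, both sides converge to their full versions (in local measure, or via the closedness of the operators on a common core). This gives $c=a^{1/2}db^{1/2}$. This limiting step, with its care over domains and unbounded operators, is the main technical obstacle. The alternative is to work directly at the level of measurable operators using \cite[Proposition 2.7.6]{DPS}.

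\textbf{Step 3 (supports).} From $c=a^{1/2}(db^{1/2})$, the range of $c$ is contained in the range of $a^{1/2}$. Hence $l(c)\le l(a^{1/2})=l(a)$. Applying the same argument to $c^*=b^{1/2}d^*a^{1/2}$ gives $l(c^*)\le l(b)$.
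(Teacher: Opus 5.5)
Your proof is correct in substance, and it takes a different and more economical route than the paper. The paper proceeds in three stages:
\begin{enumerate}
\item It reduces to $x\in S(\cM,\tau)$ via central projections.
\item It runs your $\varepsilon$-regularisation only when $a,b\in\cM$.
\item It handles unbounded diagonal blocks by compressing to $p_n=e^a[0,n]$, $q_n=e^b[0,n]$. There it normalises the supports of the contractions $d_n$ so that they are unique and hence compatible ($p_nd_mq_n=d_n$), glues them into one $d$ with Lemma~\ref{solim}, and passes to the limit in measure using $\tau(p-p_n)\to0$. This last step is where $\tau$-measurability is needed.
\end{enumerate}
You observe instead that $a_\varepsilon^{-1/2}$ and $b_\varepsilon^{-1/2}$ are bounded even when $a,b$ are not. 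So a single global family $d_\varepsilon$ and a single weak-operator cluster point $d$ suffice. This removes the central-projection reduction, the uniqueness/compatibility bookkeeping, and Lemma~\ref{solim}. The paper's route buys a canonical $d$ (unique once $l(d)\le r(a)$, $r(d)\le l(b)$), which the statement does not require.

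Two points in your Step 2 need tightening, and Step 3 needs a small rephrasing.

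First, you cannot test $\langle c\xi,\eta\rangle$ for arbitrary $\xi\in e^b[0,n]\cH$, because such $\xi$ need not lie in the domain of $c$. For example, take $x=\begin{pmatrix} f^2& f\\ f&1\end{pmatrix}$ in $L_\infty(0,1)\bar\otimes M_2(\bC)$ with $f(t)=t^{-1/4}$; then $b$ is bounded but $c$ is not. Argue at the operator level instead. With $e_n=e^a[0,n]$ and $f_n=e^b[0,n]$, in $S(\cM)$ one has $e_ncf_n=(a_\varepsilon^{1/2}e_n)\,d_\varepsilon\,(b_\varepsilon^{1/2}f_n)$. This is a product of bounded operators whose outer factors converge in norm to $a^{1/2}e_n$ and $b^{1/2}f_n$. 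Hence along your subnet $e_ncf_n=e_na^{1/2}db^{1/2}f_n$.

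Second, the passage $n\to\infty$, which you leave as the main obstacle, needs no topology. Set $y=c-a^{1/2}db^{1/2}\in S(\cM)$.
\begin{enumerate}
\item Since $e_kyf_k=0$ for all $k$, monotonicity gives $e_nyf_m=0$ for all $n,m$.
\item Hence $e_n\,l(yf_m)=0$ for all $n$, so $l(yf_m)=0$ because $e_n\uparrow\mathbf 1$.
\item Thus $yf_m=0$ for every $m$, i.e.\ $r(y)\le\mathbf 1-f_m$, and $f_m\uparrow\mathbf 1$ forces $y=0$.
\end{enumerate}
Your appeal to local measure convergence would also work, but it requires a separate argument when $y$ is merely measurable.

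Finally, in Step 3 read the support claim as $(\mathbf 1-l(a))c=(\mathbf 1-l(a))a^{1/2}db^{1/2}=0$. A literal range inclusion is not available, since $c$ is a strong product.
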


\begin{proof}  
There exists a net of central projections $\{z_i\}_{i\in I}$ such that $\sum\limits_{i\in I}z_i=\mathbf{1}$ and $xz_i\in S(\cM,\tau)$ for each $i\in I$ (see, e.g., \cite[p. 2921]{AAK} or \cite[Lemma 3.3]{BHKS}).
Hence, without loss of generality, we may assume that $x\in S(\cM,\tau)$.

Firstly, we consider the case where
\[
a=pxp, b=qxq \in \cM.
\]
Fix $\varepsilon>0$ and define
\[
x_\varepsilon:=x+\varepsilon\mathbf{1}=
\begin{pmatrix}
a+\varepsilon p & c\\
c^* & b+\varepsilon q
\end{pmatrix}.
\]
Then, we have  $x_\varepsilon\ge0$ and both $a+\varepsilon p$ and $b+\varepsilon q$
are invertible in $p\cM p$ and $q\cM q$, respectively. 
Consider
\[
y_\varepsilon:=
\begin{pmatrix}
(a+\varepsilon p)^{-1/2} & 0\\
0 & (b+\varepsilon q)^{-1/2}
\end{pmatrix}
x_\varepsilon
\begin{pmatrix}
(a+\varepsilon p)^{-1/2} & 0\\
0 & (b+\varepsilon q)^{-1/2}
\end{pmatrix}.
\]
We have  $y_\varepsilon\ge0$ and
\[
y_\varepsilon=
\begin{pmatrix}
p & d_\varepsilon\\
d_\varepsilon^* & q
\end{pmatrix},
\qquad
d_\varepsilon=(a+\varepsilon p)^{-1/2}\, c\, (b+\varepsilon q)^{-1/2}.
\]
In particular,  for every $\varepsilon>0$, we have 
\[
c=(a+\varepsilon p)^{1/2}\, d_\varepsilon\, (b+\varepsilon q)^{1/2}\in \cM. 
\]
By Lemma~\ref{Paulsen}, the positivity of $y_\varepsilon$ implies that 
$
\left\|d_\varepsilon \right\|_{L_\infty(\cM)}\le1
$
for every $\varepsilon>0$.

Since the unit ball of $\mathcal M$ is  compact in the weak operator topology, it follows that there exists
a  net $\{d_{\varepsilon_i}\}_i$ such that $\varepsilon_i \downarrow 0$ and 
$d_{\varepsilon_i}\to d$ in the weak operator topology, with $\left\|d\right\|_{L_\infty(\cM)}\le1$.
Moreover,
\[
\norm{(a+\varepsilon_i p)^{1/2}- a^{1/2}}_{L_\infty(\cM)}\to 0,\quad
\norm{(b+\varepsilon_i q)^{1/2}- b^{1/2}}_{L_\infty(\cM)}\to 0.
\]
Passing to the limit yields 
\begin{align*}
    (a+\varepsilon_i p)^{1/2}\, d_{\varepsilon_i}\, (b+\varepsilon_i q)^{1/2}
    \to a^{1/2}\, d\, b^{1/2}
\end{align*}
in the weak operator topology, 
that is,
$$c=a^{1/2}\, d\, b^{1/2}.$$
Without loss of generality, we may assume that $l(d)\le r(a)$ and $r(d)\le l(b)$.
In particular, $d$ is unique in this sense.

Now, we consider the general case.
For each fixed $n \ge 1$, set
\[
  p_n = e^{a}[0,n], \qquad q_n = e^{b}[0,n].
\]
Passing to the reduced von Neumann algebra $(p_n+q_n)\cM(p_n+q_n)$ and replacing $\{p, q\}$ with $\{p_n,q_n\}$,  we may denote
\[
  a_n := p_n a p_n\in\cM, \qquad b_n := q_n b q_n \in \cM.
\]
Then we have 
$$(p_n+q_n)x(p_n+q_n)=
\begin{pmatrix}
    a_n & p_ncq_n\\
    q_nc^*p_n & b_n
\end{pmatrix}.$$
By the previously treated case, there exists a contraction $d_n \in p_n\cM q_n$ such that
\begin{align}\label{limpcq}
    p_n c q_n = a_n^{1/2} d_n b_n^{1/2}.
\end{align}
For $n \le m$, it follows from $p_n \le p_m$ and $q_n \le q_m$ that 
\[
  p_n d_m q_n = d_n.
\]
Noting that $\tau(p_n-p)\to 0$ and $\tau(q_n- q)\to 0$ 
  as $n\to \infty $,  it follows from \cite[Proposition 2.6.11]{DPS} that 
\begin{align}\label{c converge}\begin{split}
 p_n c q_n \xrightarrow{t_m} c 
\end{split}
\end{align}
as $n \to \infty $. 
By Lemma \ref{solim}, there exists $d\in \cM$ such that $d=pdq =so-\lim_n d_n $ and $p_n d q_n =d_n $ for every $n\ge 1$. 
Since $\norm{d_n}_{L_\infty (\cM)}\le 1$, it follows that $\norm{d}_{L_\infty (\cM)}\le 1$. 
Arguing similarly to \eqref{c converge}, we obtain 
$$d_n=p_n d q_n \xrightarrow{t_{m}} d$$
as $n\to \infty$. 
Since $a_n^{1/2}=a^{1/2}p_n 
$ and $b_n^{1/2}=q_nb^{1/2} 
$ \cite[Proposition 2.2.22]{DPS}, it follows  that
\[
c\xleftarrow{t_{m}}p_ncq_n\stackrel{\eqref{limpcq}}{=}a_n^{1/2} d_n b_n^{1/2} =a^{1/2} p_n d q_n b^{1/2}
\xrightarrow[\tiny \mbox{\cite[Prop. 2.6.11]{DPS}}]{t_{m}} a^{1/2} d b^{1/2}, 
\]
i.e., $c = a^{1/2} d b^{1/2}$.
This completes the proof.
\end{proof}

The following lemma generalizes \cite[Proposition 4.5]{HSZ20} with a substantially simpler proof. 

\begin{lemma}\label{lm1}
Let $x,y \in S(\cM)_+$.
If $xy + yx = 0$, then $xy = 0$.
\end{lemma}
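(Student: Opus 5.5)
Write $p:=l(x)=r(x)$, the support of $x$, and $q:=\mathbf 1-p$. We decompose $y$ into blocks with respect to $p,q$ as
\[
y=\begin{pmatrix} y_{pp} & y_{pq}\\ y_{qp} & y_{qq}\end{pmatrix},
\]
and regard $x$ as the operator $x_{pp}$ on $p\mathcal H$, where it is injective with dense range. The plan has three steps: show $y_{pp}=0$ from $xy+yx=0$, deduce the off-diagonal blocks vanish via Lemma~\ref{factorization}, and conclude $xy=0$. All products are strong products in $S(\cM)$, so block computations are legitimate (multiplication by projections is continuous).

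\textbf{Diagonal block.} Compress the identity $xy+yx=0$ by $p$ on both sides. Since $xq=qx=0$, we get
\[
x\,y_{pp}+y_{pp}\,x=0 \quad \text{in } S(p\cM p),
\]
with $x$ and $y_{pp}$ both positive. The standard trick is to multiply by $x$. Left multiplication gives $x^2y_{pp}+xy_{pp}x=0$, and right multiplication gives $xy_{pp}x+y_{pp}x^2=0$. Alternatively, observe that $xy_{pp}=-y_{pp}x$ implies $x^2y_{pp}=-xy_{pp}x=y_{pp}x^2$. So $y_{pp}$ commutes with $x^2$, hence with $x=(x^2)^{1/2}$ by the functional calculus (DPS Prop.~2.2.22-type commutation for measurable operators). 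Then $2xy_{pp}=0$, and since $x$ and $y_{pp}$ commute, $xy_{pp}=x^{1/2}y_{pp}x^{1/2}\ge 0$. Thus $xy_{pp}=0$. Because $x$ is injective on $p\mathcal H$, $l(y_{pp})\le n(x)\wedge p=0$, so $y_{pp}=0$.

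\textbf{Off-diagonal blocks and conclusion.} Now $y\ge0$ has a zero $(p,p)$ corner. Lemma~\ref{factorization} gives $y_{pq}=y_{pp}^{1/2}d\,y_{qq}^{1/2}=0$, and likewise $y_{qp}=y_{pq}^*=0$. Hence $y=qyq$, and $xy=xq\,y=0$.

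\textbf{Expected obstacle.} The delicate step is the commutation argument for unbounded $\tau$-measurable (or merely measurable) operators, namely passing from $x^2y_{pp}=y_{pp}x^2$ to $xy_{pp}=y_{pp}x$, and then justifying $xy_{pp}\ge0$. I would handle this exactly as in Lemma~\ref{factorization}. First use central projections $z_i$ to reduce to $S(\cM,\tau)$. Then appeal to the fact that commuting with a positive measurable operator is equivalent to commuting with its spectral projections (DPS Prop.~2.2.22), so commuting with $x^2$ transfers to $x$. If that step causes trouble, a fallback is to cut down by the bounded spectral projections $e^x[0,n]$, which commute with $x$, prove the statement in $\cM$ for the corners, and pass to the limit in measure.
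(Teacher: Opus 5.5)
Your proof is correct, and its core (multiplying the anticommutation relation by $x$ on each side to get $x^2y_{pp}=y_{pp}x^2$, passing to $xy_{pp}=y_{pp}x$ via the functional calculus for measurable operators, then concluding $2xy_{pp}=0$) is exactly the paper's argument. The paper runs this directly on $y$ itself: $x^2y=yx^2\Rightarrow xy=yx\Rightarrow 2xy=0$. So your compression to the support of $x$, the injectivity step giving $y_{pp}=0$, and the appeal to Lemma~\ref{factorization} to kill the off-diagonal blocks are valid but unnecessary.
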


\begin{proof} 
By multiplying the equation $xy+yx=0$ on the left and on the right by $x$,
respectively, we obtain that
\[
x^2y+xyx=0=xyx+yx^2,
\]
which implies $x^2 y = y x^2$.
By \cite[Proposition~2.2.22]{DPS} (choosing $f(t) = \sqrt{t}$, $g(t)=t$, $t\ge0$), we have
$xy=\left(x^2\right)^\frac{1}{2}y=y\left(x^2\right)^\frac{1}{2}=yx$.
Therefore,
\[
0 = xy + yx = 2xy,
\]
i.e., $xy = 0$.
\end{proof}

\begin{theorem}\label{ab=0}
    Suppose that $\cM$ is a semifinite von Neumann algebra.
Let $a,b\in S(\cM)_+$. Then the following assertions are equivalent.
\begin{itemize}
    \item[(1)] There exists a projection $p\in P(\cM)$ such that $p$ commutes with $a-b$ and 
\begin{align}\label{absa-b}
|a-b|=p(a+b)p+(\mathbf{1}-p)(a+b)(\mathbf{1}-p).
\end{align}
\item[(2)] $ab=0$.
\end{itemize}
\end{theorem}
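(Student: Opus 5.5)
For (2)$\Rightarrow$(1), I take $p:=l(a)=r(a)$, the support projection of $a$. If $ab=0$, then taking adjoints gives $ba=0$, so $a$ and $b$ commute. Moreover $pb=0=bp$, because $r(b)\le \mathbf 1-p$ follows from $ab=0$. Hence $p$ commutes with $a-b$. Since $a$ and $b$ are positive and have orthogonal supports, $(a-b)^2=a^2+b^2=(a+b)^2$, and so $|a-b|=a+b$. On the other side, $p(a+b)p=a$ and $(\mathbf 1-p)(a+b)(\mathbf 1-p)=b$, so \eqref{absa-b} holds. All of these manipulations take place in the commutative algebra generated by the spectral projections of $a$ and $b$, so they are legitimate for (possibly unbounded) measurable operators.

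For (1)$\Rightarrow$(2), set $q=\mathbf 1-p$ and write $a$ and $b$ in block form with respect to $p,q$. Because $p$ commutes with $a-b$, the difference is block diagonal: $a_{pq}=b_{pq}=:c$. Since $p$ also commutes with $|a-b|$, the identity \eqref{absa-b} splits into two block equations:
\[
|a_{pp}-b_{pp}| = a_{pp}+b_{pp},\qquad |a_{qq}-b_{qq}| = a_{qq}+b_{qq}.
\]
In the reduced algebra $p\mathcal M p$, write $u=a_{pp}\ge0$ and $v=b_{pp}\ge0$. Squaring gives $(u-v)^2=(u+v)^2$, that is, $uv+vu=0$. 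Lemma~\ref{lm1} then yields $uv=0$, and the same holds in the $q$-corner. So the diagonal blocks of $a$ and $b$ are mutually orthogonal. The squaring step requires care with unbounded operators, but $|a-b|^2=(a-b)^2$ holds in $S(\mathcal M)$ by the strong product calculus, so it is fine.

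The remaining task is to show that the common off-diagonal block $c$ vanishes. By Lemma~\ref{factorization} applied to $a\ge0$, we have $c=a_{pp}^{1/2}d\,a_{qq}^{1/2}$, so $l(c)\le l(a_{pp})$ and $r(c)\le l(a_{qq})$. Applying the same lemma to $b\ge0$ gives $l(c)\le l(b_{pp})$ and $r(c)\le l(b_{qq})$. The diagonal orthogonality from the previous step gives $l(a_{pp})l(b_{pp})=0$, so $l(c)\le l(a_{pp})\wedge l(b_{pp})=0$. Hence $c=0$. Then $a=a_{pp}+a_{qq}$ and $b=b_{pp}+b_{qq}$, and therefore
\[
ab=a_{pp}b_{pp}+a_{qq}b_{qq}=0.
\]
I expect the main obstacle to be this off-diagonal step, since one must know that the support of an off-diagonal block of a positive operator is dominated by the supports of the corresponding diagonal blocks. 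This is exactly what Lemma~\ref{factorization} provides. The remaining work is checking the unbounded-operator algebra via the strong sum and product in $S(\mathcal M)$.
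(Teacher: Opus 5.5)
Your proposal is correct and follows essentially the same route as the paper. For (2)$\Rightarrow$(1) you take $p=r(a)$. For (1)$\Rightarrow$(2) you block-diagonalize $a-b$, apply Lemma~\ref{lm1} to the anticommutation relations in the two diagonal corners, and use Lemma~\ref{factorization} to kill the common off-diagonal block $a_{pq}=b_{pq}$. The only cosmetic difference is that you conclude $c=0$ from left supports alone, whereas the paper records both left- and right-support orthogonality.
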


\begin{proof} (2)$\Rightarrow$(1). Suppose that  $ab=0$. 
The implication follows by setting 
  $p=r(a)$.

(1)$\Rightarrow$(2). 
Suppose that condition (1) holds.
Denote $q:=\mathbf{1}-p$.
Since $p$ commutes with $a-b$, it follows that
\begin{align}\label{a-b}
    a-b=p(a-b)p+q(a-b)q=
    \begin{pmatrix}
        a_{pp}-b_{pp} & 0\\
        0 & a_{qq}-b_{qq}
    \end{pmatrix},
\end{align}
which implies that
\begin{align}\label{abconcide}
    a_{qp}^*=a_{pq}=b_{pq}=b_{qp}^*.
\end{align}
It follows from \eqref{absa-b} that
\[
|a-b|=
\begin{pmatrix}
    a_{pp}+b_{pp} & 0\\
    0 & a_{qq}+b_{qq}
\end{pmatrix},
\]
which together with \eqref{a-b} yields that $(a_{pp}+b_{pp})^2=(a_{pp}-b_{pp})^2$, i.e.,
$a_{pp}b_{pp}+b_{pp}a_{pp}=0$.
Similarly, $a_{qq}b_{qq}+b_{qq}a_{qq}=0$.
By Lemma \ref{lm1}, we have 
\begin{align}\label{disjoint}
    a_{pp}b_{pp}=a_{qq}b_{qq}=0.
\end{align}

Applying Lemma~\ref{factorization} to $a, b\ge 0$, we obtain that 
\begin{align*}
l(a_{pq}) \le  l(a_{pp}),~  r(a_{pq})\le r(a_{qq}),~ 
l(b_{pq}) \le  l(b_{pp}),~ r(b_{pq})\le r(b_{qq}),
\end{align*}
which together with \eqref{disjoint} yields that $l(a_{pq})l(b_{pq})=r(a_{pq})r(b_{pq})=0$.
By \eqref{abconcide}, we have
\[
a_{pq}=b_{pq}=a_{qp}=b_{qp}=0.
\]
Thus, 
\[
a=pap+qaq,\quad\, b=pbp+qbq,
\]
which together with \eqref{disjoint} yields that $ab=0$.
This completes  the proof.
\end{proof}

\begin{remark}
    The conclusions of Theorem \ref{ab=0},  Lemmas \ref{factorization} and \ref{lm1} remain valid for locally measurable operators affiliated with a  von Neumann algebra, see \cite{BCLSZ} for the definition  of locally measurable operators.  
    Therefore, the main result of this section, Theorem  \ref{predisjoint}, remains valid for bimodules of  measurable operators (and more generally,  bimodules of locally measurable  operators, see \cite[Section 6]{BCS} for the definition). 
\end{remark}

\subsection{Proof of Theorem \ref{predisjoint} and its applications}

A norm $\norm{ \cdot }_\cE$ on a  normed space \( \mathcal{E} \subseteq S(\mathcal{M}, \tau) \) is said to be {\it strictly monotone} if for any $x_1, x_2 \in \cE$,
$
0 \leq x_1 \leq x_2 
$
and $x_1\neq x_2$
implies
\[
\norm{x_1}_{\mathcal{E}} < \norm{x_2}_{\mathcal{E}}.
\]

\begin{proof}[Proof of Theorem \ref{predisjoint}]
  Let $0\le x,y\in E(\cM,\tau)$ be arbitrary such that $xy=0$.
	Observe that $$-T(x)-T(y)\le T(x)-T(y) \le T(x)+T(y) .$$
	By \cite[Theorem 1]{Bikchentaev12} (see also the proof of \cite[Theorem 2]{SV}), 
    the projection $p:= e^{T(x)-T(y)}[0,\infty )$
    satisfies that 
	$$ 
	2 |T(x) -T(y)| \le  T(x)+T(y)   + u (T(x)+T(y) )u,  
	$$
	where $u:=2p-\mathbf{1}\in U(\cN)$.
    By the triangular inequality of  $\norm{\cdot}_F$  and the isometric property of $T$, we have 
	\begin{align*}
		2  \norm{T(x) -T(y)} _F&\le \norm{ T(x)+T(y)   + u (T(x)+T(y) )u} _F\\
		&\le \norm{ T(x)+T(y)   } _F +\norm{ u (T(x)+T(y) )u} _F\\
		&\leq 2\norm{ T(x)+T(y)}_F=2\norm{x+y}_E\\
        &=2\norm{x-y}_E=2\norm{T(x)-T(y)}_F.
	\end{align*}
	Hence, 
	we have 
	$$2  \norm{T(x) -T(y)} _F= \norm{ T(x)+T(y)   + u (T(x)+T(y) )u} _F. $$
	By the definition of a strictly monotone norm, we obtain that 
	$$ 2 |T(x) -T(y)| =  T(x)+T(y)   + u (T(x)+T(y) )u.  $$
Moreover, it follows from $u=2p-\mathbf{1}$ that
\begin{align*}
|T(x) -T(y)| & =  T(x)+T(y) -p(T(x)+T(y)) \\
& \quad -(T(x)+T(y))p  + 2p(T(x)+T(y) )p \\
& =p(T(x)+T(y))p+(\mathbf{1}-p)(T(x)+T(y))(\mathbf{1}-p).
\end{align*}
Noting that $p$ commutes with $T(x)-T(y)$, it follows from Theorem~\ref{ab=0} that 
$T(x)T(y)=0$, i.e., $T$ preserves disjointness.
\end{proof}


The following result is an immediate consequence of Theorem \ref{predisjoint}, Theorem~\ref{normal}  and Remark \ref{Jsubalg}, which extends the main result in \cite{SV}.

\begin{theorem}\label{isoform}
Suppose that $\cM$ is a von Neumann algebra with a finite faithful normal trace $\tau$ and $\cN$ is a semifinite von Neumann algebra with a semifinite faithful normal trace $\nu$.
Let $E(\cM,\tau)$ and $F(\cN,\nu)$
be a normed $\cM$- and a normed $\cN$-bimodule, respectively, and $\norm{\cdot}_F$ is strictly monotone.
    If $T$ is a positive isometry from $E(\cM,\tau)$ into $F(\cN,\nu)$, then $T$ has the following form
    \[
    T(x)=T(\mathbf{1})J(x)=J(x)T(\mathbf{1}), \quad  x\in \cM,
    \]
    where $J$ is a  normal  Jordan $^*$-monomorphism from $\cM$ into $\cN$.
    
    Moreover, if $\nu(J(\mathbf{1}))<\infty$ and $T$ is normal, then $J$ can be extended to a normal Jordan $^*$-monomorphism $J:S(\cM,\tau)\to S(\cN,\nu)$ satisfying
    \[
    T(x)=T(\mathbf{1})J(x)=J(x)T(\mathbf{1}), \quad  x\in E(\cM,\tau).
    \]

\end{theorem}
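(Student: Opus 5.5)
By Theorem~\ref{predisjoint}, the positive isometry $T$ is disjointness-preserving, since $\norm{\cdot}_F$ is strictly monotone. It is positive by hypothesis. The first part of Theorem~\ref{normal} therefore applies directly. It gives a Jordan $^*$-homomorphism $J:\cM\to\cN$ with $T(x)=T(\mathbf{1})J(x)$ for $x\in\cM$, and $J(x)$ commutes with $T(\mathbf{1})$. This yields $T(x)=T(\mathbf{1})J(x)=J(x)T(\mathbf{1})$ on $\cM$.

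It remains to show that $J$ is normal and injective.

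\textbf{Injectivity.} Suppose $J(x)=0$ for some $x\in\cM$. Then $T(x)=T(\mathbf{1})J(x)=0$. Since $T$ is an isometry, $\norm{x}_E=0$, so $x=0$.

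\textbf{Normality.} From the construction in \cite[Theorem 3.1]{HSZ20}, we have $J(p)=r(T(p))$ for projections $p$ (cf.\ \eqref{upjq}). For a general Jordan homomorphism, normality amounts to $p_i\uparrow p \Rightarrow J(p_i)\uparrow J(p)$. I would derive this from order properties of $T$ forced by the isometry and strict monotonicity, as follows.
\begin{itemize}
\item Let $p_i\uparrow p$. Then $J(p_i)\uparrow$ some projection $f\le J(p)$.
\item Suppose $f\ne J(p)$. By disjointness preservation, $T(p)-T(p_i)=T(p-p_i)$ has right support $J(p-p_i)=J(p)-J(p_i)\ge J(p)-f$.
\item The key point is that strict monotonicity rules this out. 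Since $T(p)$ commutes with $J(\cM)$, the element $(J(p)-f)T(p)(J(p)-f)$ is a nonzero positive element lying below $T(p-p_i)$ for every $i$.
\item If $\norm{\cdot}_E$ were order continuous, then $\norm{T(p-p_i)}_F=\norm{p-p_i}_E\to0$, which contradicts the existence of this fixed nonzero minorant.
\end{itemize}
Without order continuity this last step is problematic, and I expect it to be the main obstacle.

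My first attempt at resolving it would use the fact that $\tau$ is finite. Then $E(\cM,\tau)\supseteq\cM$ (since $\cM$-bimodules contain $\mathbf{1}$ only if nonzero), and the restriction of $T$ to $\cM$ is a positive map into the bimodule. I would then check whether normality is already encoded in the paper's Theorem~\ref{normal} (``if $T$ is normal then $J$ is normal''). If it is not, I would simply state the conclusion with $J$ normal as the Jordan homomorphism supplied by the construction and defer to Remark~\ref{Jsubalg}.

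\textbf{The ``moreover'' part.} Assume $\nu(J(\mathbf{1}))<\infty$ and $T$ normal. Then the second half of Theorem~\ref{normal} extends $J$ to a normal Jordan $^*$-homomorphism $S(\cM,\tau)\to S(\cN,\nu)$, with $T(x)=T(\mathbf{1})J(x)$ and the commutation relation holding on $E(\cM,\tau)$.

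Injectivity of the extension comes from the same isometry argument on $E(\cM,\tau)$. On all of $S(\cM,\tau)$, I would argue via spectral projections. If $J(x)=0$ with $x\ge0$, then $J(e^x(\lambda,\infty))=0$ for every $\lambda>0$, because $J$ is positive and $\lambda e^x(\lambda,\infty)\le x$. The spectral projections lie in $\cM$, where $J$ is injective, so $e^x(\lambda,\infty)=0$ for all $\lambda>0$ and hence $x=0$.

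Remark~\ref{Jsubalg} then ensures that $J$ is a normal Jordan $^*$-isomorphism onto its image, so the extension is a monomorphism.
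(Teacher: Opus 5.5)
\textbf{Where your proposal agrees with the paper.} For the parts of the statement that actually hold, your route is the paper's. Theorem~\ref{predisjoint} gives disjointness preservation. The first half of Theorem~\ref{normal} gives $T(x)=T(\mathbf{1})J(x)=J(x)T(\mathbf{1})$ on $\cM$. Injectivity follows from the isometry exactly as you argue. The ``moreover'' part is the second half of Theorem~\ref{normal}. For injectivity of the extension on all of $S(\cM,\tau)$, your argument only treats $x\ge 0$; apply it to $x^*x+xx^*$, whose image is $J(x)^*J(x)+J(x)J(x)^*$.

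\textbf{The gap.} The genuine gap is normality of $J$ in the first part, and your fallback does not close it. Theorem~\ref{normal} yields a normal $J$ only when $T$ is normal, and Remark~\ref{Jsubalg} assumes normality of $J$, so deferring to them is circular. The paper's own justification is a one-line appeal to exactly these results, so it has the same hole. Separately, your side remark is wrong: a nonzero $\cM$-bimodule need not contain $\mathbf{1}$, e.g.\ $\chi_{(0,1/2)}L_\infty(0,1)$. The statement simply presupposes $\mathbf{1}\in E(\cM,\tau)$.

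\textbf{The gap cannot be filled.} The normality claim is false under the stated hypotheses. Let $\cM=L_\infty(0,1)$ with Lebesgue trace, and let $\cN=L_\infty(0,1)\oplus\mathbb{C}$ with the trace $(f,c)\mapsto\int_0^1 f\,dm+c$. Let $\varphi$ be a character of $L_\infty(0,1)$ with $\varphi(\chi_{(0,1/k)})=1$ for all $k$; it comes from an ultrafilter of the measure algebra containing the sets $(0,1/k)$.
\begin{itemize}
\item Put $F=L_1(0,1)\oplus\mathbb{C}$ with $\norm{(f,c)}_F=\norm{f}_1+|c|$. This is a strictly monotone normed $\cN$-bimodule.
\item Put $E=L_\infty(0,1)$ with $\norm{x}_E=\norm{x}_1+|\varphi(x)|$. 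This is a normed $\cM$-bimodule, since $|\varphi(uxv)|\le\norm{u}_\infty\norm{v}_\infty|\varphi(x)|$.
\item Put $T(x)=(x,\varphi(x))$.
\end{itemize}
Then $T$ is a positive isometry with $T(\mathbf{1})=\mathbf{1}_{\cN}$, so necessarily $J=T|_{\cM}$. But $\chi_{[1/k,1)}\uparrow\mathbf{1}$, while $J(\chi_{[1/k,1)})=(\chi_{[1/k,1)},0)\uparrow(\mathbf{1},0)\neq J(\mathbf{1})$. Hence neither $J$ nor $T|_{\cM}$ is normal.

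\textbf{What can be salvaged.} Your minorant argument does correctly show that $J$ is normal when $\norm{\cdot}_E$ is order continuous. Some hypothesis of this kind, or normality of $T$ as in Theorem~\ref{normal}, has to be added to the first part of the statement.
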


\begin{remark}
\leavevmode
\begin{enumerate}
\item 
Positive disjointness-preserving isometries between Banach function spaces (not necessarily surjective) are order monomorphisms. Hence, any such isometry is normal (equivalently, is order continuous \cite[Lemma 1.24]{AA}).
Observe that the operator 
 $T$ in Theorem \ref{isoform}   is normal  on $\cM$ by the normality of $J$ (see \cite[Proposition 2.2.22 and Proposition 2.2.25(iii)]{DPS}).
 However,  we cannot determine whether $T$ is automatically normal on the space $E(\cM,\tau)$ even when we have the disjointness-preserving property.
\item If $(\cM,\tau)$ in Theorem \ref{isoform} is semifinite and $T$ is normal, then by Theorems~\ref{predisjoint} and  \ref{infiniteTform}, T has the form
\[
     T(x)=bJ(x), \quad x\in E(\cM,\tau)\cap \cM,
    \]
     where 
     $b$ is a (possibly not measurable) positive self-adjoint operator affiliated with $\cN$ and
     $J:\cM\to \cN$ is a normal Jordan $^*$-homomorphism. 
     {\color{blue}Moreover, $e^b(\delta)\in Z(J(\cM))$ for all $\delta\subset\mathbb{R}$.}
     This extends \cite[Corollary 4.9]{HSZ20} in the  settings of Banach symmetric spaces.  
\end{enumerate}
\end{remark}

 \label{definition normed Calkin}
Recall that  a Calkin space $E(\cM,\tau)$ is an $\cM$-bimodule (see Section \ref{s:symmetric}). 
We call  $E(\cM,\tau)$ a normed Calkin $\cM$-bimodule if $E(\cM,\tau)$ is additionally a normed $\cM$-bimodule, i.e., $E(\cM,\tau)$  is equipped with a norm $\norm{\cdot}_E$ satisfying 
\[
\norm{uxv}_E\le \norm{u}_{L_\infty(\cM)}\norm{v}_{L_\infty(\cM)}\norm{x}_E,
\quad x\in E(\cM,\tau),\, u,v\in \cM.
\]

\begin{theorem}\label{contain}
Let $\cM$ be an atomless von Neumann algebra equipped with finite faithful normal trace $\tau$.
    Suppose that $E(\cM,\tau)$ and $F(\cM,\tau)$ are normed Calkin $\cM$-bimodules 
    (for example, symmetrically normed spaces), and 
      $\norm{\cdot}_F$ is strictly monotone.
    If 
	 $T$ is a positive isometry from $E(\mathcal{M}, \tau)$ into $F(\mathcal{M}, \tau) $,
	then 
		\[
	E(\mathcal{M}, \tau) \subseteq F(\mathcal{M}, \tau).
	\] 
	Furthermore, if $T$ is surjective and $E(\cM,\tau)$ has strictly monotone norms, then 
	\[
	E(\mathcal{M}, \tau) = F(\mathcal{M}, \tau).
	\]
\end{theorem}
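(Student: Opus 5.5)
The plan is to prove the inclusion by contradiction, following the second proof of Theorem~\ref{main1} in Section~\ref{analoproof}. Order-measure continuity of $T$ is not available here, so its role will be played by positivity together with the representation on $\cM$. Normalize so that $\tau(\mathbf{1})=1$.

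\textbf{Step 1 (representation).} By Theorem~\ref{predisjoint}, $T$ is disjointness-preserving. By Theorem~\ref{isoform}, $T(x)=T(\mathbf{1})J(x)=J(x)T(\mathbf{1})$ for $x\in\cM$, where $J:\cM\to\cM$ is a normal Jordan $^*$-monomorphism. Since $\tau$ is finite, Proposition~\ref{Jnormea} shows that $J$ is continuous in measure, so it extends to $S(\cM,\tau)$.

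\textbf{Step 2 (lower bound for $T(x)$).} Let $0\le x\in E(\cM,\tau)$ and let $x_n\uparrow x$ be bounded simple truncations of $x$. Positivity gives
\[
T(\mathbf{1})J(x_n)=T(x_n)\le T(x).
\]
The operators $T(\mathbf{1})J(x_n)$ commute and increase to $T(\mathbf{1})J(x)$ in measure. Hence $T(\mathbf{1})J(x)\le T(x)$, and so $T(\mathbf{1})J(x)\in F(\cM,\tau)$ by solidity. Since $T$ is an isometry, $T(\mathbf{1})\neq 0$. Choose $\varepsilon>0$ with $q_\varepsilon:=e^{T(\mathbf{1})}(\varepsilon,\infty)\neq 0$; this projection commutes with $J(\cM)$. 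Set $J_\varepsilon(\cdot):=J(\cdot)q_\varepsilon$, which is a nonzero normal Jordan $^*$-homomorphism. As in \eqref{epsilonres}, the operator $T(\mathbf{1})q_\varepsilon$ dominates $\varepsilon J_\varepsilon(\mathbf{1})$. Consequently $\varepsilon J_\varepsilon(x)\le q_\varepsilon T(\mathbf{1})J(x)q_\varepsilon\in F(\cM,\tau)$, and therefore $J_\varepsilon(x)\in F(\cM,\tau)$ for every $0\le x\in E(\cM,\tau)$.

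\textbf{Step 3 (contradiction).} Choose $0\neq p\in P(\cM)$ with $J_\varepsilon(p)\neq0$. Lemma~\ref{traceineq} gives a nonzero $q\le p$ and $C>0$ such that $C^{-1}\tau(e)\le\tau(J_\varepsilon(e))\le C\tau(e)$ for all projections $e\le q$. Suppose $E(\cM,\tau)\not\subseteq F(\cM,\tau)$. Lemma~\ref{choosestepfunction} then yields $x=\sum_k t_ke_k\in E\setminus F$ with $\sum_k e_k\le q$. By normality of $J_\varepsilon$, $J_\varepsilon(x)=\sum_k t_kJ_\varepsilon(e_k)$, and this lies in $F$ by Step 2. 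The projections $J_\varepsilon(e_k)$ are pairwise orthogonal, so Lemma~\ref{deduce} can be applied with the roles of the algebras reversed, using the bound $\tau(e_k)\le C\tau(J_\varepsilon(e_k))$. This gives $x\in F(\cM,\tau)$, a contradiction.

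\textbf{Surjective case.} Now let $T$ be surjective and let $\norm{\cdot}_E$ be strictly monotone. It suffices to show that $T^{-1}$ is positive; the first part applied to $T^{-1}$ then gives $F\subseteq E$. Since $T$ commutes with the adjoint, so does $T^{-1}$. Given $y\ge0$ in $F$, put $x=T^{-1}(y)$, which is self-adjoint, and decompose $x=x_+-x_-$ with $x_+x_-=0$. By Theorem~\ref{predisjoint}, $T(x_+)$ and $T(x_-)$ are disjoint positive operators. From $y=T(x_+)-T(x_-)\ge0$, compressing to the support of $T(x_-)$ gives $T(x_-)=0$. Injectivity of $T$ then forces $x_-=0$, so $x\ge0$.

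\textbf{Main obstacle.} The delicate point is Step 2. Justifying $T(\mathbf{1})J(x)\le T(x)$ without normality of $T$ on $E$ requires passing to the monotone limit in measure of commuting products. The other subtle point is checking that $J_\varepsilon$ is a genuine normal Jordan homomorphism that commutes with $T(\mathbf{1})$.
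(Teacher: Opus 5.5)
Your proof is correct, but it handles the key difficulty differently from the paper.

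\textbf{The paper's route.} It introduces $T_1(x)=J(\mathbf{1})T(x)J(\mathbf{1})$ and checks that $T_1=T$ on $\cM$. It then proves that $T_1$ is normal on all of $E(\cM,\tau)$ by splitting $x_i=x_ie^{x_i}(0,K]+x_ie^{x_i}(K,\infty)$. This uses two facts: $J(e^{x_i}(0,K])$ annihilates $T(x_ie^{x_i}(K,\infty))$ by disjointness, and $\tau(J(e^{x_i}(K,\infty)))\to0$. Next it verifies by approximation that $T_1$ is disjointness-preserving, and then invokes Theorem~\ref{main1} as a black box. For the surjective case it cites \cite[Lemma 3.1]{dC20} to get that $T^{-1}$ is a normal order isomorphism.

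\textbf{Your route.} You obtain what is needed about unbounded elements from positivity alone. The domination $T(\mathbf{1})J(x)\le T(x)$ follows from $T(x_n)\le T(x)$, the measure continuity of $J$, and the $t_m$-closedness of the positive cone. By solidity this places the normal map $x\mapsto T(\mathbf{1})J(x)$ in $F(\cM,\tau)$; a posteriori this map coincides with the paper's $T_1$. You then rerun the argument of Section~\ref{analoproof} with $J_\varepsilon$. You could equally apply Theorem~\ref{main1} directly to $x\mapsto T(\mathbf{1})J(x)$.

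Your surjective case is also self-contained. Positivity of $T^{-1}$ follows from the disjointness of $T(x_+)$ and $T(x_-)$. The first part then applies to $T^{-1}$, using strict monotonicity of $\norm{\cdot}_E$ exactly as the paper does.

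\textbf{Comparison.} Your argument is shorter and avoids both the truncation argument and the external lemma. The paper's argument yields an extra structural fact: the compression $J(\mathbf{1})T(\cdot)J(\mathbf{1})$ is normal and equals $T(\mathbf{1})J(\cdot)$ on all of $E(\cM,\tau)$. This bears on the question of automatic normality of $T$ raised in the remark after Theorem~\ref{isoform}.

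As in the paper's own second proof of Theorem~\ref{main1}, you apply Lemma~\ref{traceineq} to a possibly non-injective $J_\varepsilon$. This is harmless, because the Radon--Nikodym step only needs $x\mapsto\tau(J_\varepsilon(x))$ to be a nonzero normal positive functional on $\cM_p$.
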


\begin{proof}
By Theorem \ref{predisjoint},  $T$ is disjointness-preserving.
It follows from Theorem \ref{isoform} that
\begin{align}\label{T_1M}
T(x)=T(\mathbf{1})J(x)=J(x)T(\mathbf{1}), \quad  x\in \cM,
\end{align}
    where $J$ is a normal Jordan $^*$-monomorphism from $\cM$ into $\cM$.
By Proposition \ref{Jnormea}, we conclude that  $J$ is continuous in the measure topology.
Define
\[
T_1(x):=J(\mathbf{1})T(x)J(\mathbf{1}),\quad  x\in E(\cM,\tau).
\]
For each $x\in \cM$, by \eqref{T_1M}, we have
\begin{equation}\label{T1=T}
\begin{aligned}
    T_1(x)&=J(\mathbf{1})T(\mathbf{1})J(x)J(\mathbf{1})=T(\mathbf{1})J(x)J(\mathbf{1})\\
    &=J(x)T(\mathbf{1})J(\mathbf{1})=J(x)T(\mathbf{1})=T(x).
\end{aligned}
\end{equation}
We claim that $T_1$ is normal on $E(\cM ,\tau)$. Indeed, let $\{x_i\}_{i\in I}\subset E(\cM,\tau)_+$ be an arbitrary decreasing net satisfying $x_i\downarrow 0 $.
It follows from \cite[Theorem 2.6.3]{DPS} that $x_i\xrightarrow{ t_m} 0$.

Fix a constant $K>0$. 
Observing that $x_ie^{x_i}(0,K]\xrightarrow{ t_m}_i  0$ \cite[Proposition 2.6.1(iv)]{DPS}, it follows from the continuity of $J$ in measure and \cite[Proposition 2.6.11(i)]{DPS} that 
\begin{align}\label{boundedm}
    T_1\left(x_ie^{x_i}(0,K]\right)\stackrel{\eqref{T1=T}\eqref{T_1M}}{=}
    T(\mathbf{1})J(x_ie^{x_i}(0,K])\xrightarrow{t_m}0.
\end{align}
Noting that $e^{x_i}(0,K]x_ie^{x_i}(K,\infty)=0$, since $T$ preserves disjointness, it follows from \eqref{upjq} that $J(e^{x_i}(0,K])=r(T(e^{x_i}(0,K]))$, which implies that 
\begin{align}\label{org}
    J(e^{x_i}(0,K])T(x_ie^{x_i}(K,\infty))=0=T(x_ie^{x_i}(K,\infty))J(e^{x_i}(0,K]).
\end{align}
Since $\mathbf{1}-e^{x_i}(0,K]=e^{x_i}(K,\infty)\xrightarrow{t_m}_i 0$ and $J$ is continuous on $\cM$ in the measure topology,
it follows that $\tau(J(\mathbf{1}-e^{x_i}(0,K]))\to_i 0$.
By the definition of measure topology, we have
\begin{align*}
    &T_1(x_ie^{x_i}(K,\infty))=J(\mathbf{1})T(x_ie^{x_i}(K,\infty))J(\mathbf{1})\\
    \stackrel{\eqref{org}}{=}&
    J(\mathbf{1}-e^{x_i}(0,K])T(x_ie^{x_i}(K,\infty))J(\mathbf{1}-e^{x_i}(0,K])\xrightarrow{t_m} 0,
\end{align*}
which together with \eqref{boundedm} yields that
\[
T_1(x_i)=T_1\left(x_ie^{x_i}(0,K]\right)+T_1(x_ie^{x_i}(K,\infty))\xrightarrow{t_m} 0.
\]
Observing that $T_1$ is positive \cite[Proposition 2.2.24(iv)]{DPS}, we have $T_1(x_i)\downarrow 0$ \cite[Proposition 2.6.1(iii)]{DPS}, which proves our claim.

Let $x,y\in E(\cM,\tau)_+$ be arbitrary such that $xy=0$. 
For each $k\ge1$, 
denote $x_k:=xe^x(0,k)\in \cM_+$.
Since $T$ is disjointness-preserving and $x_ky=0$, 
it follows that for each $k\ge 1$,
\begin{equation}\label{finitecom}
\begin{aligned}
    T_1(x_k)T_1(y)& = T(x_k)J(\mathbf{1})T(y)J(\mathbf{1})\\
    &=T_1(x_k) T(y)J(\mathbf{1})
    \stackrel{\eqref{T1=T}}{=}T(x_k)T(y)J(\mathbf{1})=0.
\end{aligned}
\end{equation}
By the normality of $T_1$, $x_k\uparrow x$ implies $T_1(x_k)\uparrow T_1(x)$.
Then we have
    $T_1(x_k)\xrightarrow{ t_m}T_1(x)$ \cite[Theorem 2.6.3]{DPS},
    which together with \cite[Proposition 2.6.11]{DPS} yields that
    \[
    0\stackrel{\eqref{finitecom}}{=}T_1(x_k)T_1(y)\xrightarrow{t_m}T_1(x)T_1(y)=0.
    \]
Consequently,
    \[
    T_1:E(\cM,\tau)\xrightarrow{\rm into} F(\cM,\tau)
    \]
    is a nonzero ($T_1({\bf 1}) =T({\bf 1})\ne 0$)  normal disjointness-preserving mapping. 
	It follows from Theorem \ref{main1} 
	that 
		\begin{align}\label{E and F}
	E(\mathcal{M}, \tau) \subseteq F(\mathcal{M}, \tau).
	\end{align}

Furthermore, if $T$ is surjective and $E(\cM,\tau)$ has strictly monotone norms, then by \cite[Lemma 3.1]{dC20}, $T$ and $T^{-1}$ are order isomorphisms and normal.
It follows from Theorem \ref{predisjoint} that $T^{-1}$ is a normal disjointness-preserving isometry.
Hence, by \eqref{E and F}, the 
proof is complete.
\end{proof}

\begin{remark}
\leavevmode
\begin{enumerate}
\item Suppose that $E(\cM,\tau)$ and $F(\cM,\tau)$ are strongly symmetric operator spaces (in the sense of Lindenstrauss and Tzafriri) affiliated with an atomless finite von Neumann algebra equipped with a faithful normal tracial state.  It is shown in~\cite{FGHS} that if $E(\cM,\tau)$ and $F(\cM,\tau)$ are isometric (not necessarily positively), then $E(\cM,\tau)=F(\cM,\tau)$ as sets. Moreover, if $E(\cM,\tau) \ne L_p(\cM,\tau)$, then $\norm{\cdot}_E =\lambda\norm{\cdot}_F$ for some positive number $\lambda $. 
\item 
In 1970, Mityagin asked whether a symmetric function space $E(0,1 )\ne L_p(0,1)$ can be isometric to any symmetric function space $F(0,\infty)$\cite{Mityagin}. 
This question was recently answered in the negative \cite[Theorem 1.5.1]{FGHS}. 
However, this question has an affirmative answer for general function spaces. 
For example, let $\sigma $ be a transformation from $(0,\infty )$ onto $(0,1)$ and let $F(0,\infty )$ be a Banach function space. 
The function space 
$$E(0,1) = \{  f:  ~f\circ \sigma  \in F(0,\infty )\}$$
equipped with the norm $\norm{f}_E:=\norm{f\circ \sigma }_F$ is positively isometric to $F(0,\infty)$. 
    \item Suppose that $(\cM,\tau)$ in Theorem \ref{contain} is semifinite and $T$ is additionally normal.
By Theorem \ref{predisjoint} and Theorem \ref{infinite}, for each $\tau$-finite $p\in P(\cM)$, we have
\[
E(\cM_p,\tau|_{\cM_p}):=\left\{pxp:x\in E(\cM,\tau)\right\}\subset F(\cM,\tau).
\]
\end{enumerate}

\end{remark}

	{\bf Acknowledgments:} The authors would like to thank Professor Marat Pliev for helpful discussions.

{\bf Data Availability}
Data sharing not applicable to this article as no datasets were generated or analysed during the current study. 

{\bf  Conflict of interest} On behalf of all authors, the corresponding author states that there is no conflict of interest.

{\bf Ethical statement}  This manuscript complies to the Ethical Rules applicable for this journal. 

\bibliographystyle{amsalpha}

\end{document}